\newtheorem{theorem}{Theorem}
\newtheorem{lemma}{Lemma}
\newtheorem{proposition}{Proposition}
\DeclareMathOperator*{\Vdc}{Vdc}
\begin{document}

\title[]{Approximately jumping towards the Origin}

\author[]{Alex Albors}
\author[]{Fran\c{c}ois Cl\'ement}
\author[]{Shosuke Kiami}
\author[]{Braeden Sodt}
\author[]{Ding Yifan}
\author[]{Tony Zeng}

\address{Department of Mathematics, University of Washington, Seattle, WA 98195, USA}
\email{aalbors@uw.edu }
\email{fclement@uw.eu }
\email{shokiami@uw.edu}
\email{bsodt@uw.edu }
\email{yifan29@uw.edu}
\email{txz@uw.edu}

\thanks{This work was carried out under the umbrella of the WXML (Washington Experimental Mathematics Laboratory) whose support is gratefully acknowledged. The authors acknowledge useful discussions with Douglas Rizzolo and Stefan Steinerberger.}

\begin{abstract} 
Given an initial point $x_0 \in \mathbb{R}^d$ and a sequence of vectors $v_1, v_2, \dots$ in $\mathbb{R}^d$, we define a greedy sequence by setting $x_{n} = x_{n-1} \pm v_n$ where the sign is chosen so as to minimize $\|x_n\|$. 
We prove that if the vectors $v_i$ are chosen uniformly at random from $\mathbb{S}^{d-1}$ then elements of the sequence are, on average, approximately distance $\|x_n\| \sim \sqrt{\pi d/8}$ from the origin. We show that the sequence $(\|x_n\|)_{n=1}^{\infty}$ has an invariant measure $\pi_d$ depending only on $d$ and we determine its mean and study its decay for all $d$. We also investigate a completely deterministic example in $d=2$ where the $v_n$ are derived from the van der Corput sequence. Several additional examples are considered.
\end{abstract}

\maketitle

\vspace{-10pt}

\section{Introduction and Results}
\subsection{Introduction}
Our results are motivated by a beautiful and fairly recent result of Bettin-Molteni-Sanna \cite{Bettin}. Pick some $x > 0$ and insert signs into the standard harmonic series $\sum 1/n$ in a greedy way such that at each step we are as close as possible to $x$. More formally, we set $x_1 = 1$ and, for all $n \geq 2$,
$$ x_{n} = \begin{cases} x_{n-1} + \frac{1}{n} \qquad &\mbox{if}~x_n \leq x \\ x_{n-1} - \frac{1}{n} \qquad &\mbox{if}~a_n > x. \end{cases}$$
The divergence of the harmonic series implies that, after a finite number of steps, the sequence is always going to be uniformly close to $x$ and $|x_n - x| \leq 2/n$. Bettin-Molteni-Sanna \cite{Bettin} prove that for a generic $x \in \mathbb{R}_{>0}$, there are subsequences along which $x_n$ is \textit{much} closer to $x$: for any $c> 0$ there exists a subsequence of positive density depending on $c$ for which $|x_{n_k} - x| \leq n_k^{-c}$. 

\begin{center}
\begin{figure}[h!]
\begin{tikzpicture}
\node at (0,0) {\includegraphics[width=0.3\textwidth]{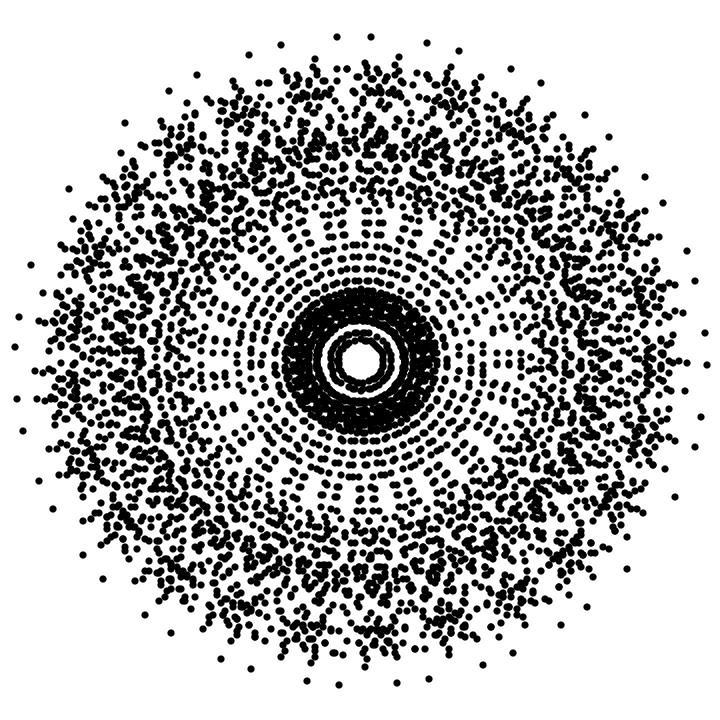}};
\node at (6,0) {\includegraphics[width=0.3\textwidth]{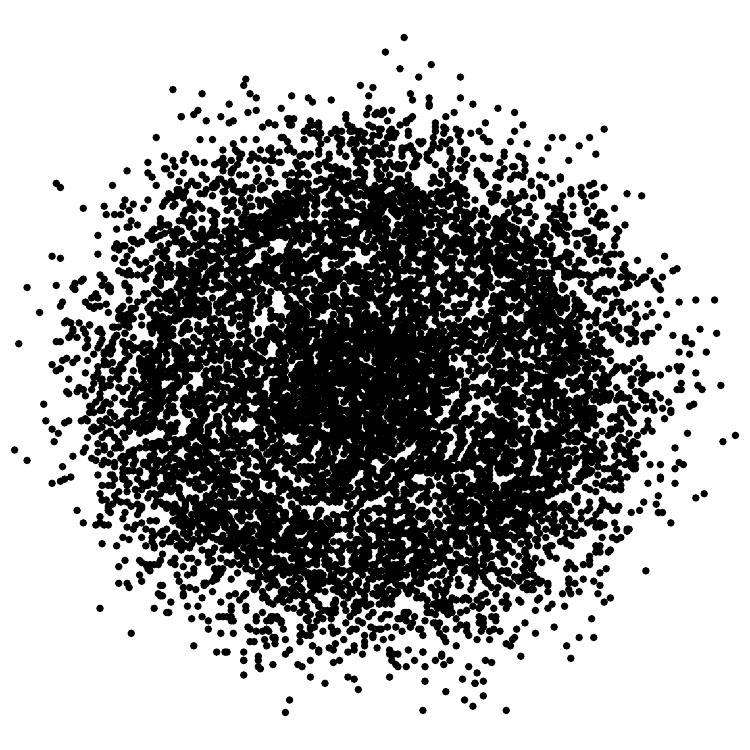}};
\end{tikzpicture}
\caption{Left: starting in $0.0001 + 5i$ with $\alpha = 1.0415 \sqrt{2}$ and $v_n = e^{ i  \alpha n}$. Right: using the vector $v_n = e^{in^2}$.}
\end{figure}
\end{center}

Steinerberger-Zeng \cite{steinzeng} considered a two-dimensional variant (see Fig. 1): identifying $\mathbb{R}^2 \cong \mathbb{C}$, they considered $z_{n+1} = z_n \pm e^{i \alpha n}$,
whichever is closer to the origin (where $\alpha$ is an irrational real).
More formally, the definition is
$$ z_{n} = \begin{cases} z_{n-1} +  e^{i \alpha n} \qquad &\mbox{if}~|z_{n-1} + e^{i \alpha n}|  < |z_{n-1} - e^{i \alpha n}|  \\ z_{n-1} -  e^{i \alpha n} \qquad &\mbox{if}~|z_{n-1} + e^{i \alpha n}|  > |z_{n-1} - e^{i \alpha n}|  \end{cases}$$
with $z_n$ being undefined if both norms are the same (which happens very rarely). This dynamical system can also be interpreted as the action of a particular piecewise isometry \cite{goetz2,goetz3}. It gives rise to the most curious patterns: in particular, the choice of signs appears to be ultimately periodic in which case the sequence lies on a finite union of circles with the same center (see Fig. 1). The special nature of the sequence $e^{i \alpha n}$ is heavily exploited. This leads to an interesting general problem. Given $x \in \mathbb{R}^d$ and a sequence of vectors $v_1, v_2, \dots \in \mathbb{R}^d$, suppose we consider the
$$ x_{n} = \begin{cases} x_{n-1} + v_n \qquad &\mbox{if}\quad \|x_{n-1}+ v_n\| < \|x_{n-1} - v_n\| \\ x_{n-1} - v_n \qquad &\mbox{if}\quad \|x_{n-1}- v_n\| < \|x_{n-1} + v_n\|. \end{cases}$$
What can we expect? It is clear, a priori, that one might expect a general trend towards the origin. It is not clear whether one would expect much more than that.

\begin{center}
\begin{figure}[h!]
\begin{tikzpicture}
\node at (0,0) {\includegraphics[width=0.25\textwidth]{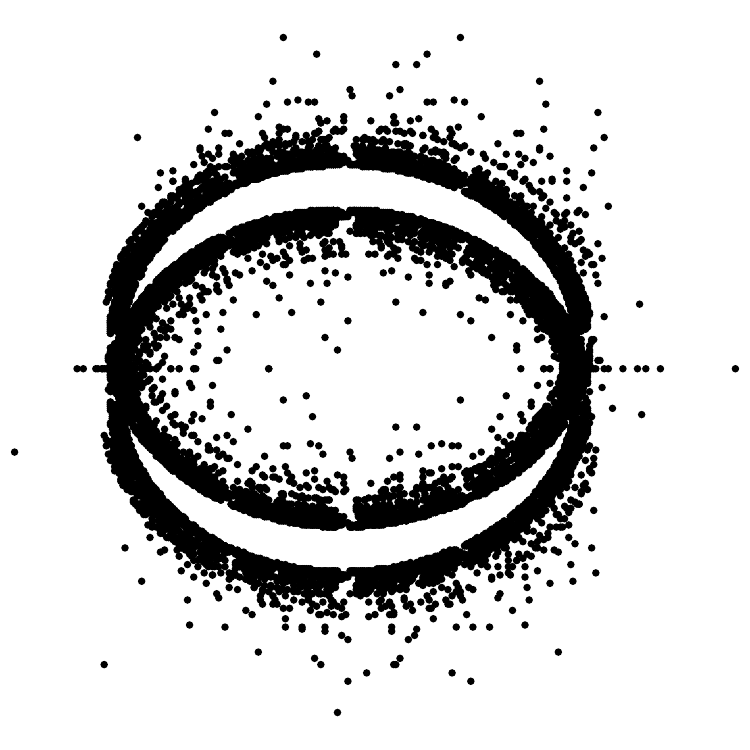}};
\node at (4.5,0) {\includegraphics[width=0.25\textwidth]{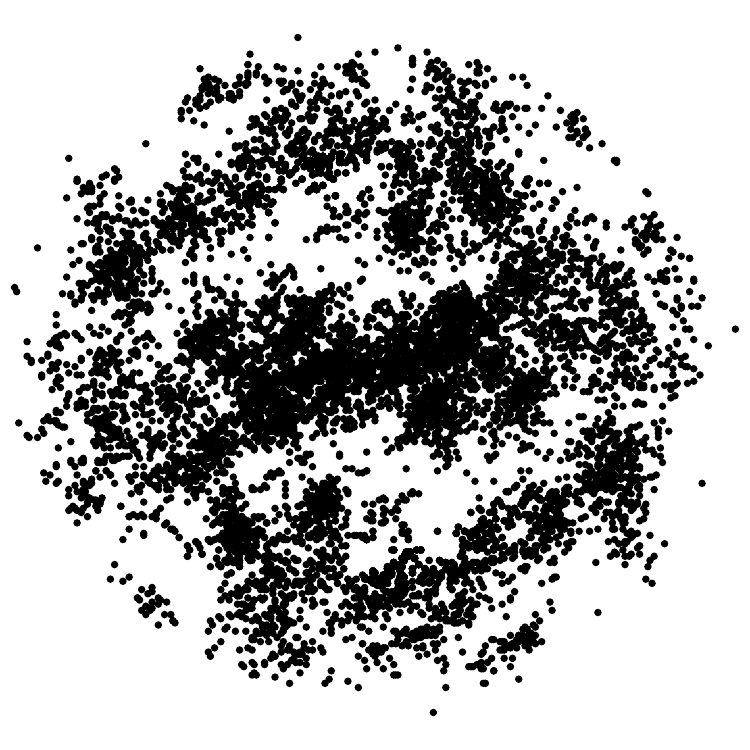}};
\node at (9,0) {\includegraphics[width=0.25\textwidth]{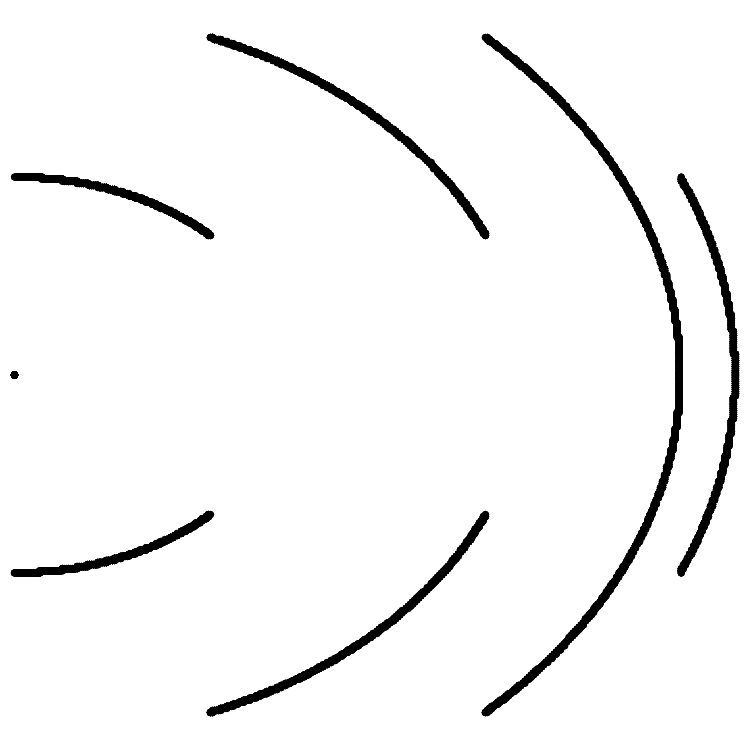}};
\end{tikzpicture}
\label{fig:examplesvdc}
\caption{Using the Farey sequences (left), the van der Corput sequence in base 3 (middle) and in base 8 (right).}
\end{figure}
\end{center}

This is already highly nontrivial in the two-dimensional case. Restricting ourselves to vectors (complex numbers) of unit length one could consider sequences $(a_n)_{n=1}^{\infty}$ and then set $v_n = e^{2 \pi i a_n}$. Fig. 2 shows three such examples. The first one comes from concatenating the Farey fractions which results in
$$ 0,1, 0, \frac{1}{2}, 1, 0, \frac13, \frac12, \frac23, 1, 0, \frac14, \frac13, \frac12, \frac23, \frac34, 1, 0, \frac15, \frac14, \frac13, \frac25, \frac12, \dots$$
The other two examples are derived from the van der Corput sequence which is defined as follows: to compute $x_n \in [0,1]$, write $n$ in base $b$, reflect its digits around the comma and turn it back into a real number. The van der Corput sequence in base 2 begins
$$ \frac12, \frac14, \frac34, \frac18, \frac58, \frac38, \frac78, \dots$$
Three arising dynamical systems are shown in Figure 2. We will be able to say some things about the van der Corput system, see \S 1.3, we do not have any rigorous results of any type for the Farey system.

\subsection{The random case}
It is a natural assumption that for `many' choices of vectors, the arising dynamical system will not have any type of structure: it will be `random'. This leads to a natural question: what do we observe when the sequence is truly random?
To motivate the question, we consider the two sequences 
$$ z_n =  z_{n-1} \pm e^{2 \pi i \sqrt{2} n^2} \qquad \mbox{and} \qquad z_n =  z_{n-1} \pm e^{2 \pi i \sqrt{2} n^3}.$$
When drawing a great many points, it is difficult to see a difference, they each approximately fill out a disk. When keeping track of the distance to the origin, however, a difference seems to reveal itself (see Fig. 3).

\begin{center}
\begin{figure}[h!]
\begin{tikzpicture}
\node at (0,0) {\includegraphics[width=0.45\textwidth]{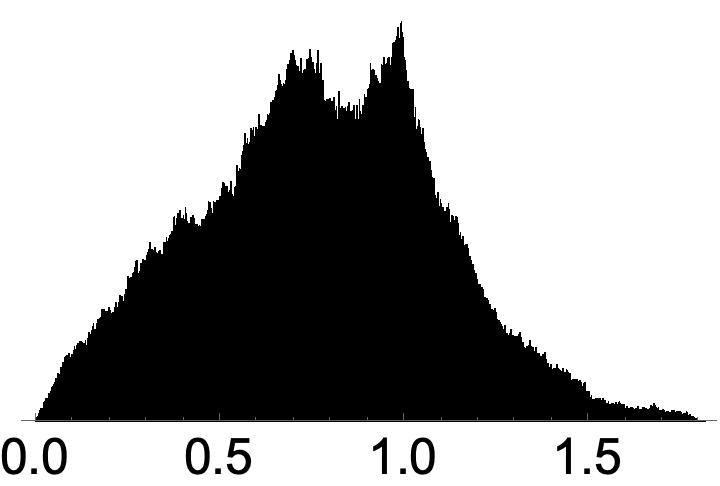}};
\node at (6,0) {\includegraphics[width=0.45\textwidth]{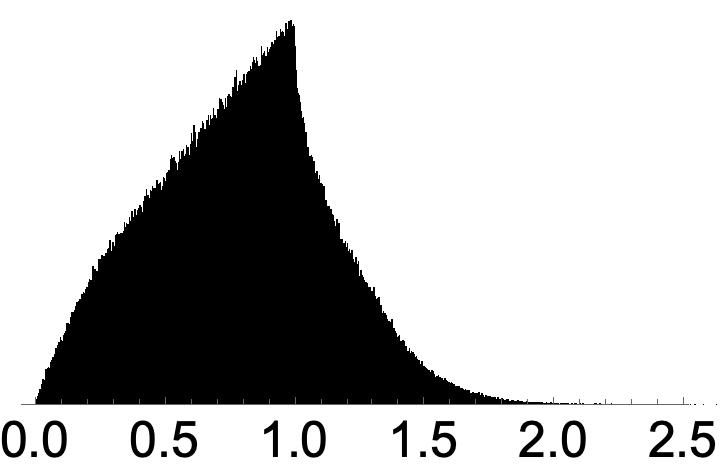}};
\end{tikzpicture}
\caption{Distribution of distances from the origins for the first million terms of $\pm e^{2 \pi i \sqrt{2} n^2}$ (left) and $\pm e^{2 \pi i \sqrt{2} n^3}$ (right).}
\end{figure}
\end{center}

As it turns out, the distribution for $\pm e^{2 \pi i \sqrt{2} n^3}$ coincides with what one gets if one replaces $\pm e^{2 \pi i \sqrt{2} n^3}$ by $\pm e^{i X_n}$ where $X_n$ is chosen uniformly at random with respect to $[0, 2\pi]$. The distribution of $ \sqrt{2} n^3 \mod~2\pi$ is sufficiently `random' to emulate the random case while, in contrast, the sequence $ \sqrt{2} n^2 \mod~2\pi$ is not. Formally, we define the random sequence by $x_0 \in \mathbb{R}^d$
and
$$ x_{n} = \begin{cases} x_{n-1} + v_n \qquad &\mbox{if}\quad \|x_{n-1}+ v_n\| < \|x_{n-1} - v_n\| \\ x_{n-1} - v_n \qquad &\mbox{if}\quad \|x_{n-1}- v_n\| < \|x_{n-1} + v_n\|, \end{cases}$$
where $v_n$ is chosen uniformly at random from $\mathbb{S}^{d-1}$. We are interested in the asymptotic density of $x_n$. By decoupling into radial and angular part, it is clear that if we prove sufficiently strong upper bounds on $\|x_n\|$, then the distribution is going to be radial (the angular part can be considered to be a random walk on $\mathbb{S}^{d-1}$), it thus suffices to understand $\|x_n\|$.

\begin{theorem} For each dimension $d \geq 2$, the random variable $\|x_n\|$ has a unique, invariant measure $\pi_d$ on $[0, \infty]$. This invariant measure $\pi_d$ has expectation 
$$ \int_{0}^{\infty} x ~d\pi_d(x) = \frac{\sqrt{\pi}}{2} \frac{ \Gamma(\frac{d+1}{2})}{\Gamma(\tfrac{d}{2})} = \sqrt{\frac{\pi}{8}} \sqrt{d} + o(\sqrt{d})$$
which is asymptotically $(\sqrt{\pi/8} + o(1)) \cdot \sqrt{d}$.
The invariant measure $\pi_d$ has faster than Gaussian decay: for every $\alpha > 0$
$$ \int_{0}^{\infty} e^{\alpha x^2} d\pi_d(x) < \infty.$$
\end{theorem}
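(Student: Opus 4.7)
My first step would be to use rotational invariance of the uniform measure on $\mathbb{S}^{d-1}$ to reduce to a one-dimensional Markov chain on the radii $r_n := \|x_n\|$. Expanding $\|x_{n-1} \pm v_n\|^2 = r_{n-1}^2 \pm 2\langle x_{n-1}, v_n\rangle + 1$ and choosing the minimizing sign yields the key identity
$$r_n^2 \;=\; r_{n-1}^2 \;-\; 2\, r_{n-1}\, |Z_n| \;+\; 1,$$
where $Z_n$ is distributed as the first coordinate of a uniform point on $\mathbb{S}^{d-1}$. In particular $|Z_n|$ has density proportional to $(1-t^2)^{(d-3)/2}$ on $[0,1]$, which near $t=0$ is bounded above by some constant $c_d > 0$, and a short beta integral gives $\mathbb{E}|Z| = \Gamma(d/2)/(\sqrt{\pi}\,\Gamma((d+1)/2))$.

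\textbf{Existence, uniqueness, and the mean.} I would establish existence of an invariant probability measure $\pi_d$ via a Foster-Lyapunov drift condition with $V(r)=r^2$: the identity above immediately gives
$$\mathbb{E}[V(r_n) \mid r_{n-1}=r] \;=\; r^2 - 2r\, \mathbb{E}|Z| + 1 \;\le\; V(r) - 1$$
for all $r$ outside a compact set. Combined with the observation that for $r>0$ the one-step kernel has a density inherited from that of $|Z|$, this yields positive Harris recurrence, a unique invariant distribution $\pi_d$, and finiteness of $\int r^2\, d\pi_d$. Applying stationarity to $r_n^2 - r_{n-1}^2 = 1 - 2 r_{n-1}|Z_n|$ and using independence of $Z_n$ from $r_{n-1}$ then forces $\int r\, d\pi_d(r) = 1/(2\mathbb{E}|Z|)$, which matches the claimed expression $\tfrac{\sqrt{\pi}}{2}\,\Gamma((d+1)/2)/\Gamma(d/2)$; Stirling delivers the asymptotic $\sqrt{\pi/8}\,\sqrt{d}+o(\sqrt{d})$.

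\textbf{Super-Gaussian decay.} For the tail statement I would use the stronger Lyapunov function $V_\alpha(r) = e^{\alpha r^2}$. Substituting the recursion gives
$$\mathbb{E}[V_\alpha(r_n) \mid r_{n-1}=r] \;=\; e^{\alpha(r^2+1)}\, \mathbb{E}\bigl[e^{-2\alpha r |Z|}\bigr].$$
Since the density of $|Z|$ is bounded by $c_d$, the crude estimate $\mathbb{E}[e^{-2\alpha r |Z|}] \le c_d \int_0^{\infty} e^{-2\alpha r t}\,dt = c_d/(2\alpha r)$ shows the right-hand side is at most $V_\alpha(r)\cdot c_d e^{\alpha}/(2\alpha r)$, which is less than $\tfrac12 V_\alpha(r)$ once $r$ is large. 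Standard Lyapunov-function theory (Meyn-Tweedie) then upgrades this drift inequality into $\int V_\alpha\, d\pi_d < \infty$, and since this holds for every $\alpha>0$, the invariant measure has strictly super-Gaussian decay.

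\textbf{Main obstacle.} The analytic pieces---the beta integral for $\mathbb{E}|Z|$, the elementary bound on $\mathbb{E}[e^{-2\alpha r|Z|}]$, and Stirling---are all routine. The genuinely delicate step is verifying the Markov-chain hypotheses underlying uniqueness and the stationary moment identity: namely $\phi$-irreducibility and aperiodicity on the recurrence set, together with a small-set argument, for the kernel $r \mapsto \sqrt{r^2 - 2r|Z| + 1}$. A convenient base point is $r=1$, from which the chain can reach all of $[0,\sqrt{2}]$ with positive density; extracting a small set there and combining with the Foster drift should close the loop.
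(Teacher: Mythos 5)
Your proposal matches the paper's proof in all essentials: the same reduction to the scalar recursion $r_n^2 = r_{n-1}^2 - 2r_{n-1}|Z|+1$ with $|Z|$ the first coordinate of a uniform point on $\mathbb{S}^{d-1}$ (density $c_d(1-t^2)^{(d-3)/2}$), the same exponential Lyapunov function $e^{\alpha r^2}$ with the elementary bound $\mathbb{E}[e^{-2\alpha r|Z|}] \le c_d/(2\alpha r)$ fed into Meyn--Tweedie's comparison theorem, and the same stationary-identity plus beta-integral computation giving $\int r\,d\pi_d = 1/(2\mathbb{E}|Z|) = \tfrac{\sqrt{\pi}}{2}\Gamma(\tfrac{d+1}{2})/\Gamma(\tfrac{d}{2})$. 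The only cosmetic difference is that you establish Harris recurrence via a Foster--Lyapunov drift with $V(r)=r^2$ and a small set near $r=1$, whereas the paper argues via hitting times of $A=[\sqrt{d},\sqrt{d}+\tfrac12]$ with a minorization using Lebesgue measure on $[\sqrt{d}-\tfrac14,\sqrt{d}]$; both are standard implementations of the same idea.
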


\textbf{Remarks.} Several remarks are in order.
\begin{enumerate}
\item This result can be seen as yet another illustration of the concentration of measure phenomenon \cite{ledoux}. When we are in $x_n \neq 0$, then most random vectors $v$ are actually going to be `nearly' orthogonal to $-x_n$ and thus $x_n \pm v$ is going to be typically further away from the origin than $x_n$ as long as $\|x_n\|$ is small. Once $\|x_n\|$ is large, even a small inner product can usually be used to decrease the norm. This process self-stabilizes around $\|x_n\| \sim c_d\sqrt{d}$ and this is where $\|x_n\|$ ends up spending most of its time.
\begin{center}
\begin{figure}[h!]
\begin{tikzpicture}
\node at (-3,0) {};
\node at (0,0) {\includegraphics[width=0.3\textwidth]{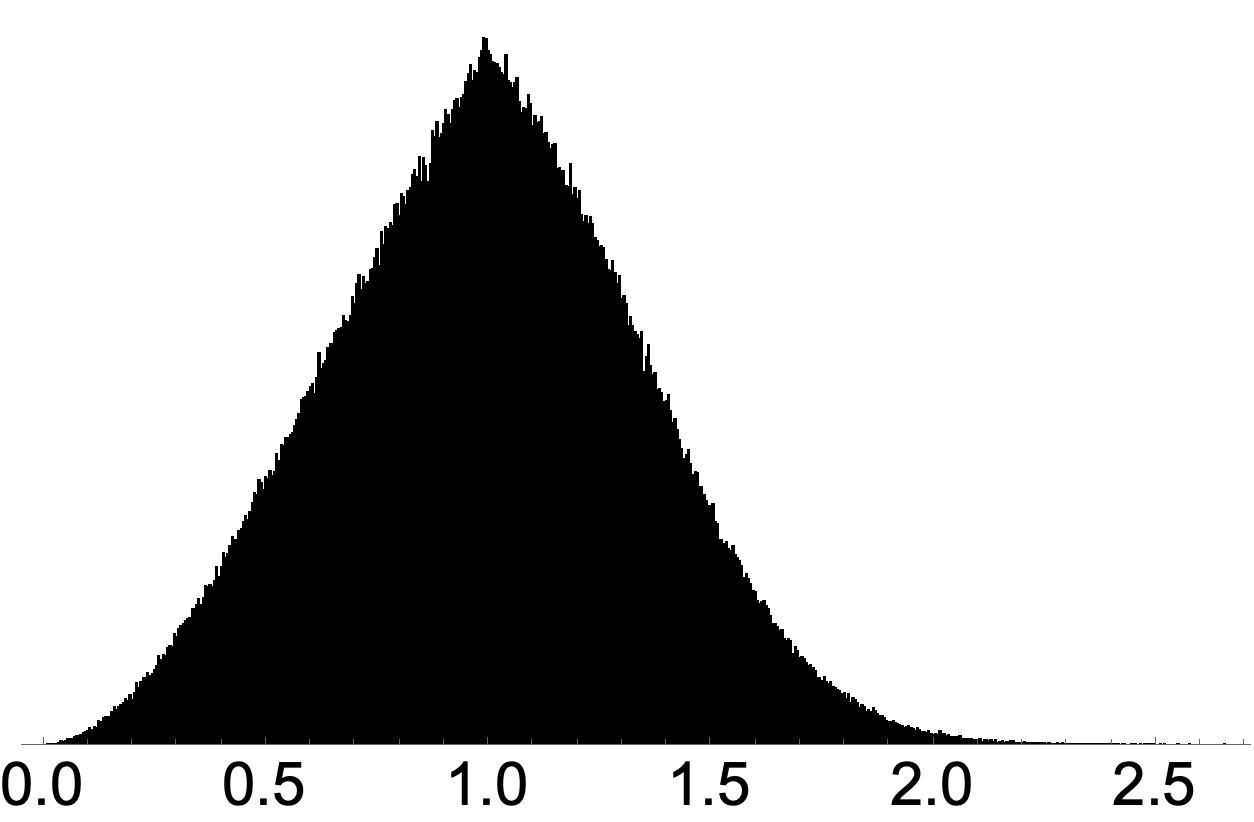}};
\node at (5,0) {\includegraphics[width=0.3\textwidth]{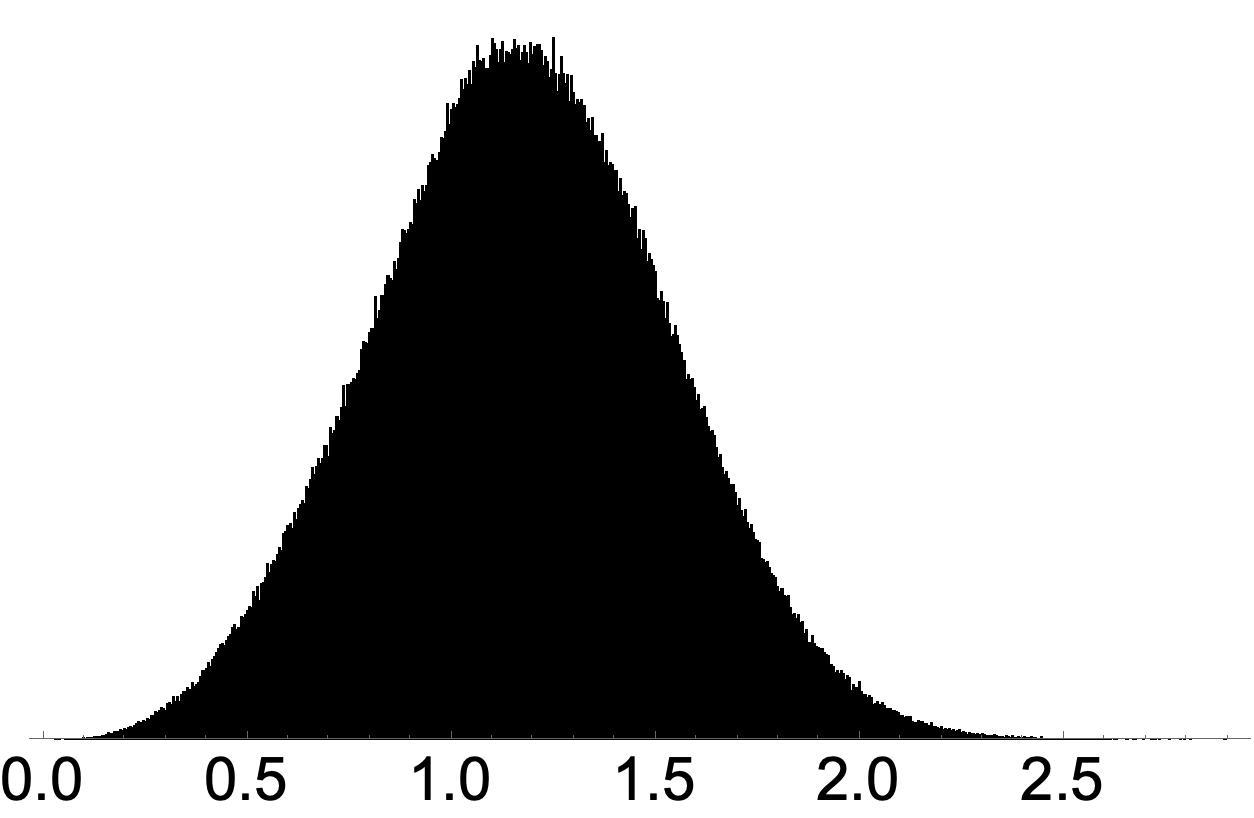}};
\node at (1,1) {$d=3$};
\node at (6.5, 1) {$d=4$};
\end{tikzpicture}
\caption{Distribution of distances to the origin for $d =3, 4$.}
\end{figure}
\end{center}
\item We may think of the process as a random walk on $[0,\infty]$. The main difficulties are that it's not infinitesimally generated, it jumps, and that the jumps are of bounded size. We will prove that it is an aperiodic, recurrent Harris chain; the decay estimate is achieved by the construction of a suitable Lyapunov function.
\item The Lyapunov function implies that $\pi_d$ decays quite quickly once one exceeds the expectation; one could build a similar Lyapunov function to show that it also decays quite quickly for values smaller than the expectation. We have not pursued this line of research because something stronger is true: none of the distributions $\pi_d$ are Gaussian, however, it appears that, even in relatively low dimensions, one has
$$ \pi_d  \quad \sim \quad \frac{e^{-(x- \mathbb{E}\pi_d)^2}}{\sqrt{\pi}}.$$
In particular, it appears as if the limiting distribution has a variance that is (asymptotically) constant.
It would be interesting to make this precise.
\end{enumerate}

\subsection{Deterministic Constructions.} We will now move towards the deterministic setting. Steinerberger-Zeng \cite{steinzeng} showed a remarkably degree of regularity arising from the choice of vectors $v_n = \exp(2 \pi i \alpha n)$ with $\alpha$ irrational. The sequence $n\alpha~\mbox{mod}~1$ is also known as the Kronecker sequence and is one of the best known examples of a sequence with a very regular distribution \cite{Nie92}. This motivated us to consider the canonical example of such a sequence $(a_n)_{n\in \mathbb{N}}$: the van der Corput sequence. This classical sequence has been extensively studied since its introduction by Johannes van der Corput \cite{vdc} in the 1930s. It is traditionally defined in base 2, but can be defined in any base $b$ in the following way. Given an integer $n$, write 
$$ n=\sum_{k=0}^{\ell}a_k b^k \qquad \mbox{where the `digits' satisy} \quad a_k \in \{0,\ldots,b-1\}.$$ The $n$-th point $\Vdc_b(n)$ of the base $b$ van der Corput sequence is defined as
$$\Vdc_b(n):=\sum_{k=0}^{\ell}a_k b^{-k-1}.$$
This is easiest to understand with an example. We already saw the first few terms of van der Corput sequence in base 2 above
$$ \frac12, \frac14, \frac34, \frac18, \frac58, \frac38, \frac78, \dots$$
More generally, we see that $a_0 =0$, $a_1 = 1/b$ and $a_b = 1/b^2$. As above, we now consider the sequence
$$ z_{n} = \begin{cases} z_{n-1} +  e^{2 \pi i \Vdc_{b}(n)} \qquad &\mbox{if}~|z_{n-1} + e^{2 \pi i \Vdc_{b}(n)}|  < |z_{n-1} -  e^{2 \pi i \Vdc_{b}(n)}|  \\ z_{n-1} -   e^{2 \pi i \Vdc_{b}(n)} \qquad &\mbox{if}~|z_{n-1} +  e^{2 \pi i \Vdc_{b}(n)}|  > |z_{n-1} -  e^{2 \pi i \Vdc_{b}(n)}|  \end{cases}$$

We start by showing that if we start anywhere in the plane and use the van der Corput sequence in base 2, then one reaches a fixed neighborhood of the origin in a number of steps proportional to the original distance from the origin.

\begin{theorem}\label{th:general}
    Let $b=2$ and $z_{-1} \in \mathbb{R}$. If the sequence is not indeterminate by then, it takes at most 
    $O(|z|)$ steps of the sequence to reach $B(0,\sqrt{2})$.
\end{theorem}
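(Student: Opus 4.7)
My strategy is to exploit the dyadic pair structure of the base-$2$ van der Corput sequence. A short computation with binary expansions gives $\Vdc_2(2m+1) - \Vdc_2(2m) = 1/2$ for every $m \geq 1$, so $v_{2m+1} = -v_{2m}$, and the stream naturally splits as the singleton $v_1 = -1$ followed by the pairs $(v_{2m}, v_{2m+1}) = (w_m, -w_m)$ with $w_m := e^{i\pi \Vdc_2(m)}$. The first step is a case analysis of what one such pair does: setting $\alpha := \langle z, w_m\rangle$, a check of the four possible combinations of greedy signs collapses into two cases. Either the pair leaves $z$ unchanged (this is what happens when $|\alpha| \leq 1$, since the greedy sign at step two exactly cancels the greedy sign at step one), or it sends $z \mapsto z - 2\,\mathrm{sgn}(\alpha)\,w_m$ and decreases $|z|^2$ by exactly $4(|\alpha| - 1)$ (when $|\alpha| > 1$). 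Thus $|z|$ is monotonically non-increasing at pair boundaries, and strict progress happens only through pairs with $|\alpha| > 1$.

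The next step is to observe that the pair directions themselves form a van der Corput-type sequence in $[0,\pi)$: the argument of $w_m$ is exactly $\pi\, \Vdc_2(m)$, inheriting the well-known $O(\log N / N)$ discrepancy of $\Vdc_2$. Consequently I can fix an absolute constant $N$ such that every window of $N$ consecutive pair indices contains some $m$ whose direction lies within $\pi/8$ (say) of any prescribed angle; for such $m$ one has $|\langle z, w_m\rangle| \geq |z| \cos(\pi/8)$, provided the direction of $z$ stays close to a fixed value across the window. Since each step changes $z$ by a unit vector, the angular drift across the window is $O(N/|z|)$, which is negligible once $|z|$ exceeds a sufficiently large absolute threshold $R_0$.

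Combining the two ingredients, every window of $N$ pairs decreases $|z|^2$ by at least $4(|z| \cos(\pi/8) - 1) - O(1)$, hence drops $|z|$ itself by at least a fixed $\delta > 0$ as long as $|z| \geq R_0$. Iterating $O(|z_{-1}|)$ times shrinks $|z|$ below $R_0$, and a bounded number of additional steps, using the density of the $w_m$ in $\mathbb{S}^1$, pushes $|z|$ into $B(0,\sqrt{2})$, for a total of $O(|z_{-1}|)$ steps. The main obstacle I foresee is precisely this endgame: once $|z|$ is close to $\sqrt{2}$, the dichotomy above requires $|\alpha| > 1$, which is no longer automatic at small norms, so a tailored finite-step argument is needed to close the last gap. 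The hypothesis $z_{-1} \in \mathbb{R}$ plays only a minor role: it aligns nicely with $v_1 = -1$, and the non-indeterminate clause lets us disregard the ties that arise when $z$ sits momentarily on a coordinate axis.
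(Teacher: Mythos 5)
Your pair decomposition --- $(v_{2m}, v_{2m+1}) = (w_m, -w_m)$, with the dichotomy that a pair either leaves $z$ fixed (when $|\langle z, w_m\rangle| \leq 1$) or jumps to $z - 2\,\mathrm{sgn}(\alpha)\, w_m$ and lowers $|z|^2$ by exactly $4(|\alpha|-1)$ --- is exactly the content of the paper's Proposition~\ref{prop:stdec}, stated in a cleaner algebraic form. From there you diverge genuinely: the paper runs an exhaustive geometric case analysis over blocks of eight steps (Lemmas~\ref{lemma:doublestep}--\ref{lemma:yesno}), whereas you appeal to the well-distribution of the pair angles $\pi\,\Vdc_2(m)$ to find, in any bounded window, a $w_m$ nearly aligned with the current radial direction. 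That part is sound: van der Corput does have the needed window property (any $2^{j+1}$ consecutive indices contain a full dyadic block and hence hit every dyadic interval of length $2^{-j}$), and for $|z|$ above a large absolute constant $R_0$ the angular drift of $z$ over the window is negligible, yielding the linear decrease.

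The missing piece is larger than you suggest. The drift bound forces $R_0$ to be many multiples of the window length $N$: a single moving pair at $|z| = 2$ with $|\alpha|$ barely above $1$ can rotate the direction of $z$ by $\pi/3$ while barely reducing the norm, so the tracking collapses at small radius. That leaves the entire annulus $\sqrt{2} \leq |z| < R_0$ --- and in particular the range $\sqrt{2} < |z| \leq 2$, which is the technical heart of the paper's proof --- unhandled; and since the theorem's claim is $O(|z_{-1}|)$, in that annulus you owe a uniform bounded-step guarantee, not merely eventual entry. Density of the $w_m$ alone cannot close it: one must rule out the scenario in which all moving pairs have $|\alpha|$ only slightly above $1$, producing a strictly decreasing but not summably decreasing sequence of norms stalling above $\sqrt{2}$. (Proposition~\ref{prop:nomove} illustrates how delicate the near-threshold regime is: there are starting positions arbitrarily close to the unit circle that remain frozen for arbitrarily many steps.) The paper closes this regime by showing that once $|z_k| < 2$ at a block boundary, a single further block of eight steps always lands inside $B(0,\sqrt{2})$; your proposal needs an argument of comparable precision before it proves the theorem.
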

While similar results with varying ball sizes could be obtained for different $b$, we limit our proofs to base 2 and some insights into the base 3 case. We also show that for odd $b$ there exist $z_{-1} \in \mathbb{R}^2$ such that $z_{kb-1}=z_{-1}$ for any $k \in \mathbb{N}$ and the sign choice at all steps is always the same. We call such positions \emph{periodic starts}.

\begin{theorem}\label{th:vdc2}
Let $b=2$.  Any initial value in $B(0,1)\backslash \{(0,0)\}$ is a periodic start. When $z_{n}$ enters the unit ball and $n$ is odd, then subsequent elements of the sequence are contained in a point and a semicircle centered on that point of radius 1.
\end{theorem}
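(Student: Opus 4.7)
The starting observation I would use is a structural identity for the van der Corput sequence in base $2$: for every $k \geq 0$,
\[
\Vdc_2(2k+1) - \Vdc_2(2k) = \tfrac12,
\]
since appending a trailing bit $1$ to the binary expansion of $2k$ (whose last bit is $0$) adds exactly $2^{-1}$ to the digit-reversed fraction. Hence the vectors come in opposite pairs,
\[
v_{2k+1} = e^{2\pi i(\Vdc_2(2k) + 1/2)} = -v_{2k},
\]
and the whole proof will reduce to analyzing what two consecutive greedy steps with a pair $(w, -w)$ do inside the open unit disk.

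The key geometric lemma I would establish is the following: if $p \in B(0,1) \setminus \{0\}$ and $w \in S^1$ with $\langle p, w\rangle \neq 0$, then greedily applying $w$ and then $-w$ to $p$ returns to $p$. Writing $s = \langle p, w\rangle$, the first step picks $\epsilon_1 = -\operatorname{sign}(s)$ and produces $z_1 = p + \epsilon_1 w$; a one-line computation gives $\langle z_1, w\rangle = s - \operatorname{sign}(s)$, which flips sign relative to $s$ because $|s| \leq |p| < 1$. The second greedy step (minimizing $|z_1 - \epsilon_2 w|$) then picks $\epsilon_2 = \operatorname{sign}\langle z_1, w\rangle = -\operatorname{sign}(s) = \epsilon_1$. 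Thus the second step undoes the first and $z_2 = p$.

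I would then apply the lemma inductively to prove both parts of the theorem at once. Suppose $z_n$ lies in $B(0,1) \setminus \{0\}$ at an odd index $n$. By the Vdc identity, $(v_{n+1}, v_{n+2})$ is an opposite pair, so the lemma gives $z_{n+2} = z_n$; the new point is again in the same set at an odd index, so the induction continues with $(v_{n+3}, v_{n+4})$, and so on. By induction $z_{n+2k} = z_n$ for every $k \geq 0$, which is the ``point'' in the theorem. Each intermediate term $z_{n+2k+1} = z_n + \epsilon_{n+2k+1} v_{n+2k+1}$ lies at distance $1$ from $z_n$, and the greedy sign choice forces $\epsilon_{n+2k+1}\langle z_n, v_{n+2k+1}\rangle \leq 0$, placing the displacement $z_{n+2k+1}-z_n$ in the half-plane $\{u : \langle u, z_n\rangle \leq 0\}$; all intermediate points therefore sit on the semicircle of radius $1$ about $z_n$ on the side facing the origin. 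Specializing to $n = -1$ (odd) recovers the first statement: any $z_{-1} \in B(0,1) \setminus \{0\}$ is a periodic start with $z_{2k-1} = z_{-1}$.

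The hardest step is really just the sign bookkeeping in the lemma, but it becomes transparent once one notices that $|s| < 1$ inside the open unit disk forces the second step to undo the first. The only genuine subtlety is the indeterminate case $\langle z_n, v_{n+2k+1}\rangle = 0$, in which the greedy rule picks no sign; following the convention of Theorem \ref{th:general} I would simply assume the sequence is well-defined at the relevant steps --- for a fixed $z_n$ this excludes at most two directions on $S^1$, so the caveat is mild.
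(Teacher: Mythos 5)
Your proof is correct and follows essentially the same strategy as the paper's: both exploit the base-$2$ van der Corput identity $\Vdc_2(2k+1)-\Vdc_2(2k)=\tfrac12$ to see that the two steps from an odd-indexed $z_n$ use opposite vectors, and then argue that inside the open unit disk the second greedy step necessarily undoes the first, while the intermediate point lands on the semicircle facing the origin. The only difference is cosmetic: the paper argues geometrically (the two candidates for $z_{n+2}$ are distance $2$ apart, so at most one lies in $B(0,1)$, and it must be $z_n$), whereas you run the inner-product sign bookkeeping $\langle z_{n+1},w\rangle=s-\operatorname{sign}(s)$ with $|s|<1$; both reasonings are sound and equivalent, and your handling of the indeterminate case matches the paper's as well.
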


\begin{figure}[h!]
    \centering
    \includegraphics[width=0.5\linewidth]{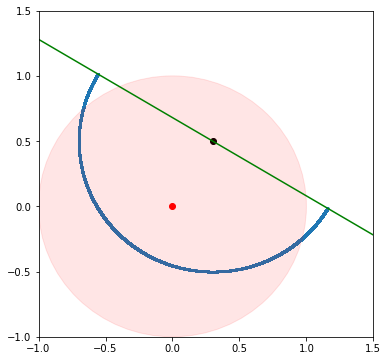}
    \caption{The sequence associated to base 2 for $z_{-1} \in B(0,1)\backslash \{(0,0)\}$. In red, the origin and $B(0,1)$. $z_{-1}$ is in black, and all other distinct points in the sequence are in blue. All points are in the semi-circle defined by the green line .}
    \label{fig:base2}
\end{figure}

As the base $b$ increases, things become more complicated. When $b$ is even, we can prove that no periodic starts exist. In that sense, the case $b=2$ is special. If $b \geq 5$ is odd, then periodic starts always exist and we can identify two regions that yield periodic starts.

\begin{figure}[h!]
    \centering
    \includegraphics[width=0.3\linewidth]{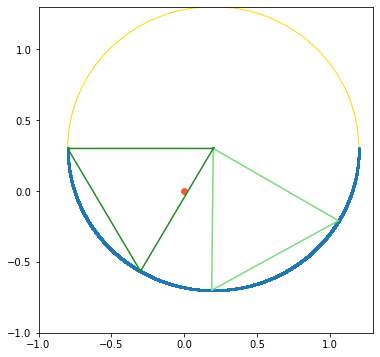}
    \includegraphics[width=0.3\linewidth]{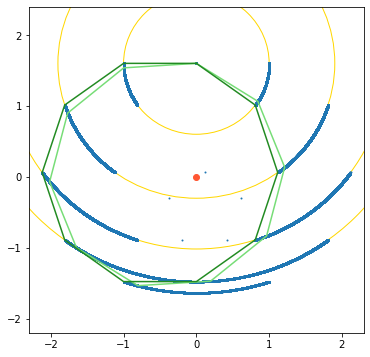}
    \includegraphics[width=0.3\linewidth]{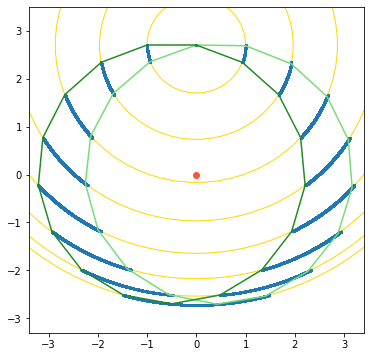}
    \caption{The polygons corresponding to the first and $100^{th}$ set of $b$ elements of the sequence associated with base $b$ van der Corput, for $b=3$, 10 and 17 (left to right). $200\,000$ points are computed.}
    \label{fig:poly_intro}
\end{figure}

\begin{theorem}\label{th:polygon}
    Let $ b\geq 4$. Then, considering $(a_n)_{n \in \mathbb{N}}=\Vdc_b(n)$,
    \begin{enumerate}
        \item if $b$ is even, no periodic starts exist.
        \item if $b$ is odd, the periodic starts form two open triangles.
    \end{enumerate}
\end{theorem}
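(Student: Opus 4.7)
The plan is to translate the periodic-start condition into a system of affine inequalities on $z_{-1}$, then exhibit a geometric obstruction for $b$ even and identify three binding constraints carving out an open triangle for $b$ odd. By definition, a periodic start requires the same sign at every step; by the symmetry $z \mapsto -z$ it suffices to treat the all-$+$ case. Let $\omega := e^{2\pi i/b}$ and $\theta_k := 2\pi\,\Vdc_b(k)/b \in [0, 2\pi/b)$, so the $k$-th block of $b$ steps uses the rotated $b$-th roots of unity $e^{i\theta_k}\omega^r$ for $r = 0,\ldots,b-1$. Closure $z_{kb-1} = z_{-1}$ becomes $\sum_r\omega^r = 0$, which always holds, so the content lies entirely in greedy consistency. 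The identity $\|z+v\|^2-\|z-v\|^2 = 4\mathrm{Re}(\bar z v)$ rewrites the greedy rule at step $r$ of block $k$ as
\[
\mathrm{Re}(z_{-1}e^{-i\phi}) + c_r < 0, \qquad c_r := \mathrm{Re}(T_r\omega^{-r}), \quad \phi := \theta_k + 2\pi r/b,
\]
with $T_r := \sum_{j<r}\omega^j = (1-\omega^r)/(1-\omega)$; a direct computation gives $c_r = \sin(\pi r/b)\cos(\pi(r+1)/b)/\sin(\pi/b)$. Density of $(\Vdc_b(k))_{k\ge 0}$ in $[0,1)$ forces the inequality for every $\phi \in [2\pi r/b, 2\pi(r+1)/b)$ and every $r$.

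\textbf{Case (1), $b$ even.} The key observation is that two of the coefficients vanish: $c_0 = 0$ (since $\sin 0 = 0$) and $c_{b/2-1} = 0$ (since $\cos(\pi(b/2)/b) = \cos(\pi/2) = 0$). The constraint at $r = 0$ on the arc $[0, 2\pi/b)$ and the constraint at $r = b/2-1$ on the arc $[\pi - 2\pi/b, \pi)$ then both reduce to $\mathrm{Re}(z_{-1}e^{-i\phi}) < 0$. Writing $z_{-1} = \rho e^{i\psi}$ with $\rho > 0$, the first forces $\psi \in (\pi/2 + 2\pi/b,\, 3\pi/2)$ (the arc of directions whose cosine is strictly negative uniformly on the $r = 0$ arc), and the second forces $\psi \in (3\pi/2,\, 5\pi/2 - 2\pi/b)$. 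These two half-open arcs of equal length $\pi - 2\pi/b$ share only the boundary point $3\pi/2$, which is excluded; their intersection is empty, so no periodic start exists for $b$ even.

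\textbf{Case (2), $b$ odd.} Now $b/2$ is not an integer and $\cos(\pi/2 \mp \pi/(2b))$ is strictly nonzero, so the obstruction above disappears. I would show that the admissible region is bounded by exactly three tangent lines: the line $\mathrm{Re}(z) = 0$ from $r = 0$ at $\phi = 0$ (where $c_0 = 0$), and two tangent lines at $\phi = \pi \mp \pi/b$ arising from $r = (b-3)/2$ and $r = (b-1)/2$ respectively (these being the tighter offsets at the two arc boundaries closest to $\phi = \pi$, since $-c_r$ increases through zero as $r$ crosses $(b-1)/2$). A direct trigonometric computation (matching explicit verification for $b = 3, 5, 7, 9$) shows that these three lines intersect pairwise at three points forming an open triangle, and monotonicity of $\phi \mapsto \mathrm{Re}(z e^{-i\phi}) + c_r$ on each arc together with convexity of the admissible set reduces the verification of all remaining constraints to their values at the three candidate vertices, confirming slackness. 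The second triangle is $-R_+$, arising from the all-$-$ periodic starts via the symmetry $z_{-1} \mapsto -z_{-1}$. The main technical obstacle is establishing the slackness of the $b-3$ non-binding constraints uniformly in $b$; the cleanest route is to exploit the explicit closed form of $c_r$ together with the fact that the three binding tangent lines inscribe a conic that circumscribes the polygon $(T_r)$, which controls the inradius from below.
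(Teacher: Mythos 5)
Your translation of the periodic-start condition into the affine system $\mathrm{Re}(z_{-1}e^{-i\phi})+c_r<0$, $\phi\in[2\pi r/b,2\pi(r+1)/b)$, is correct, and the closed form $c_r=\sin(\pi r/b)\cos(\pi(r+1)/b)/\sin(\pi/b)$ checks out. This is a genuinely different (and cleaner) formulation than the paper's, which first establishes the $b$-gon structure (Theorem~\ref{th:polygonStruct}), then works with the ``inner polygons'' $P_{\alpha,i}$ of admissible origin positions, and finally shows (Proposition~\ref{prop:rotation}) that the full intersection over all rotation angles equals $P_{0,i}\cap P_{2\pi/b,i}$.

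Your even case is complete and I find it more elegant than the paper's. The observation that $c_0=c_{b/2-1}=0$ reduces both the $r=0$ and $r=b/2-1$ constraint families to a pure angular condition $\cos(\psi-\phi)<0$, and the two resulting $\psi$-windows $[\pi/2+2\pi/b,\,3\pi/2)$ and $[3\pi/2,\,5\pi/2-2\pi/b)$ are disjoint. The paper obtains the same conclusion by exhibiting the two disjoint open bands $B_1,B_2$ in which $P_{0,i}$ and $P_{2\pi/b,i}$ must lie; your version dispenses with the polygon apparatus entirely.

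The odd case, however, has a genuine gap, which you yourself flag: you never establish that the $b-3$ constraints from $r\notin\{0,(b-3)/2,(b-1)/2\}$ are slack on the candidate triangle. Saying ``monotonicity $\ldots$ together with convexity of the admissible set reduces the verification $\ldots$ to the three candidate vertices'' only converts the problem into $3(b-3)$ vertex inequalities that still have to be verified uniformly in $b$, and the proposed route through ``the three binding tangent lines inscribe a conic that circumscribes the polygon $(T_r)$'' is not a well-defined argument as written. This is precisely the content the paper proves: Lemma~\ref{lemma:triangle} shows $P_{0,i}\cap P_{2\pi/b,i}$ is a triangle, and Proposition~\ref{prop:rotation} is a careful arc-containment argument showing that every intermediate $P_{\alpha,i}$ contains that triangle. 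Without your own version of that containment, the claim that the admissible region equals the triangle cut out by your three lines is unsubstantiated; as it stands you have only shown the admissible region is contained in that triangle. (Two smaller points: your ``verification for $b=3,5,7,9$'' includes $b=3$, which the theorem explicitly excludes and which the paper notes has the opposite containment of inner polygons; and the half-open arcs need a bit more care — the limiting constraint at $\phi=\pi\mp\pi/b$ is non-strict, so openness of the resulting triangle must be argued separately, e.g.\ via the strict inequality at the attained endpoint $\theta_k=0$ of the adjacent arc.)
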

Empirically, periodic starts also exist when $b=3$ but that argument would require additional work. We conclude by showing that periodic starts lead to relatively uninteresting sequences: they trace out a regular polygon in a periodic manner, partially rotated around $z_{-1}$.

\begin{theorem}\label{th:polygonStruct}
Let $b \geq 2$. For any periodic start $z_{-1} \in \mathbb{R}^2$, the arising sequence forms rotated copies of a regular $b-$sided polygon.
\end{theorem}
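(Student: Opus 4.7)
The plan is to exploit the self-similar structure of the van der Corput sequence: within any block of $b$ consecutive steps, the step vectors are the $b$-th roots of unity rigidly rotated by a block-dependent angle, so under a constant sign choice each ``round'' traces the \emph{same} polygon, up to a rotation about $z_{-1}$. Once the underlying self-similarity identity is in hand, the argument is essentially bookkeeping.

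Writing $n=kb+m$ with $0\le m<b$, the base-$b$ digits of $n$ are the digits of $k$ shifted up by one, preceded by the new least-significant digit $m$, so $\Vdc_b(kb+m)=m/b+\Vdc_b(k)/b$. With $\zeta := e^{2\pi i/b}$ and $\omega_k := e^{2\pi i \Vdc_b(k)/b}$, this gives
\[
v_{kb+m} \;=\; \omega_k \zeta^m.
\]
Let $\epsilon\in\{\pm 1\}$ denote the global sign chosen at every step (by definition of a periodic start) and set $S_m := \sum_{j=0}^m \zeta^j$ with $S_{-1}:=0$. A short induction from $z_{kb-1}=z_{-1}$ gives $z_{kb+m}=z_{-1}+\epsilon\,\omega_k\,S_m$ for $m=-1,0,\ldots,b-1$; the closure condition $z_{(k+1)b-1}=z_{-1}$ is automatic since $S_{b-1}=1+\zeta+\cdots+\zeta^{b-1}=0$. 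Comparing rounds, the vertex set $\{z_{-1}+\epsilon\,\omega_k\,S_m\}_{m=-1}^{b-2}$ of round $k$'s polygon is exactly the image of round $0$'s vertex set under rotation by angle $2\pi\,\Vdc_b(k)/b$ about the fixed point $z_{-1}$. This delivers the ``rotated copies'' portion of the conclusion.

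It remains to check that round $0$'s polygon is a regular $b$-gon. Using the geometric-series identity $S_m=(1-\zeta^{m+1})/(1-\zeta)$ and setting $c:=1/(1-\zeta)$, one computes $S_m-c=-\zeta^{m+1}/(1-\zeta)$, so $|S_m-c|=1/|1-\zeta|$ is independent of $m$ and $S_m$ is obtained from $S_{m-1}$ by rotation by $\zeta$ about $c$. Hence the $b$ vertices $\{z_{-1}+\epsilon S_m\}_{m=-1}^{b-2}$ lie on a common circle of radius $1/|1-\zeta|$ centered at $z_{-1}+\epsilon c$, spaced at equal central angles $2\pi/b$: a regular $b$-gon. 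The one subtlety worth flagging is that the rotation pivoting the different rounds is $z_{-1}$ itself, which happens to be one of the $b$ polygon vertices (since $S_{-1}=0$); this makes the ``rotated copies'' interpretation exact and geometrically transparent.
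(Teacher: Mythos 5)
Your proof is correct and takes essentially the same route as the paper's: both exploit the self-similarity $\Vdc_b(kb+m)=m/b+\Vdc_b(k)/b$ to show that each block of $b$ steps traces the same shape rotated about $z_{-1}$ by an angle governed by $\Vdc_b(k)$. Your complex-number phrasing, with the geometric-series formula $S_m=(1-\zeta^{m+1})/(1-\zeta)$ and the explicit circumcircle centered at $z_{-1}+\epsilon/(1-\zeta)$, is a cleaner packaging of the paper's real-coordinate trigonometric-sum computation of $|z_n-z_{-1}|$.
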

In the even $b$ case, this characterization holds for a certain number of steps in the sequence that depends on $z_{-1}$, given in Proposition~\ref{prop:nomove}.

\begin{center}
\begin{figure}[h!]
\begin{tikzpicture}
\node at (0,0) {\includegraphics[width=0.35\textwidth]{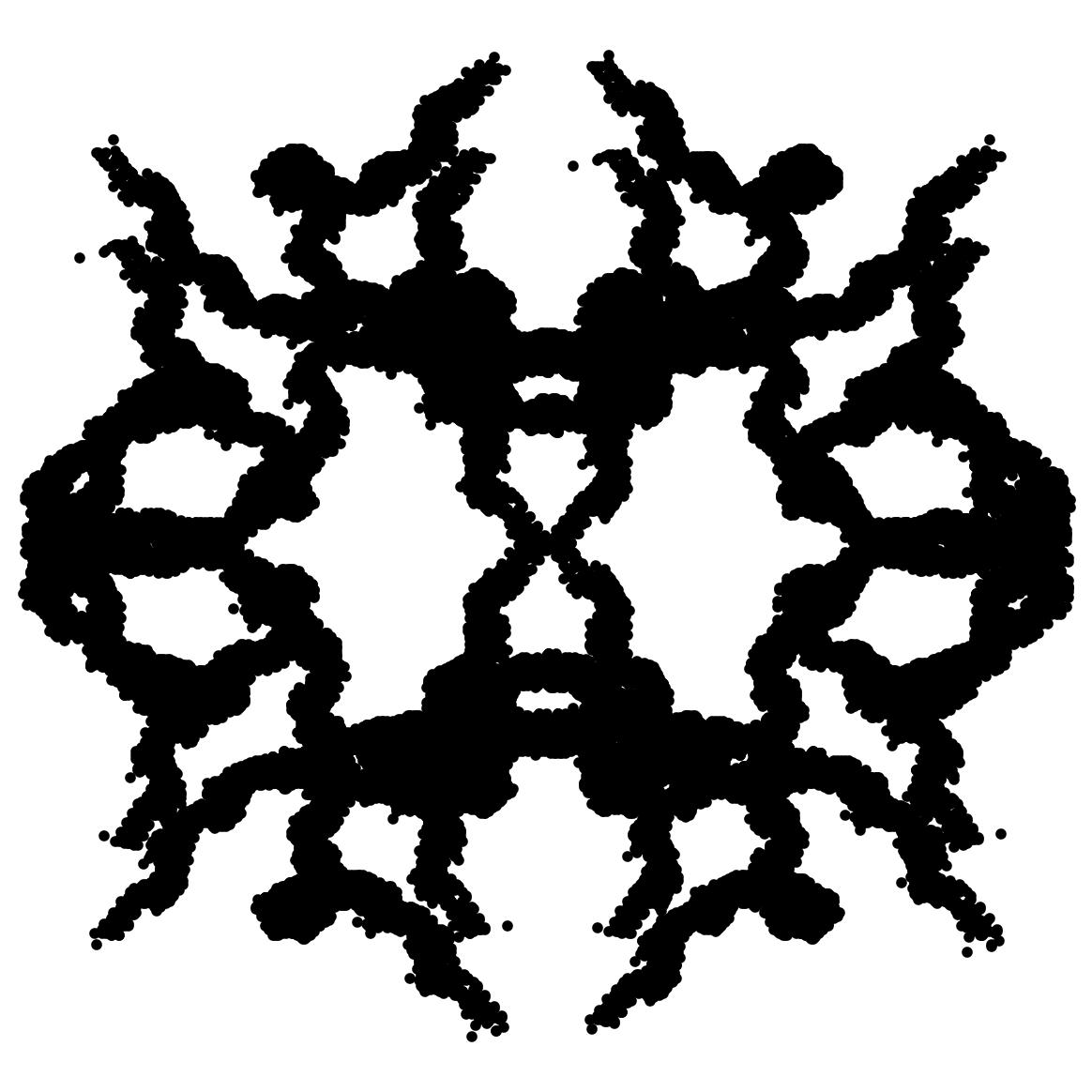}};
\node at (6,0) {\includegraphics[width=0.4\textwidth]{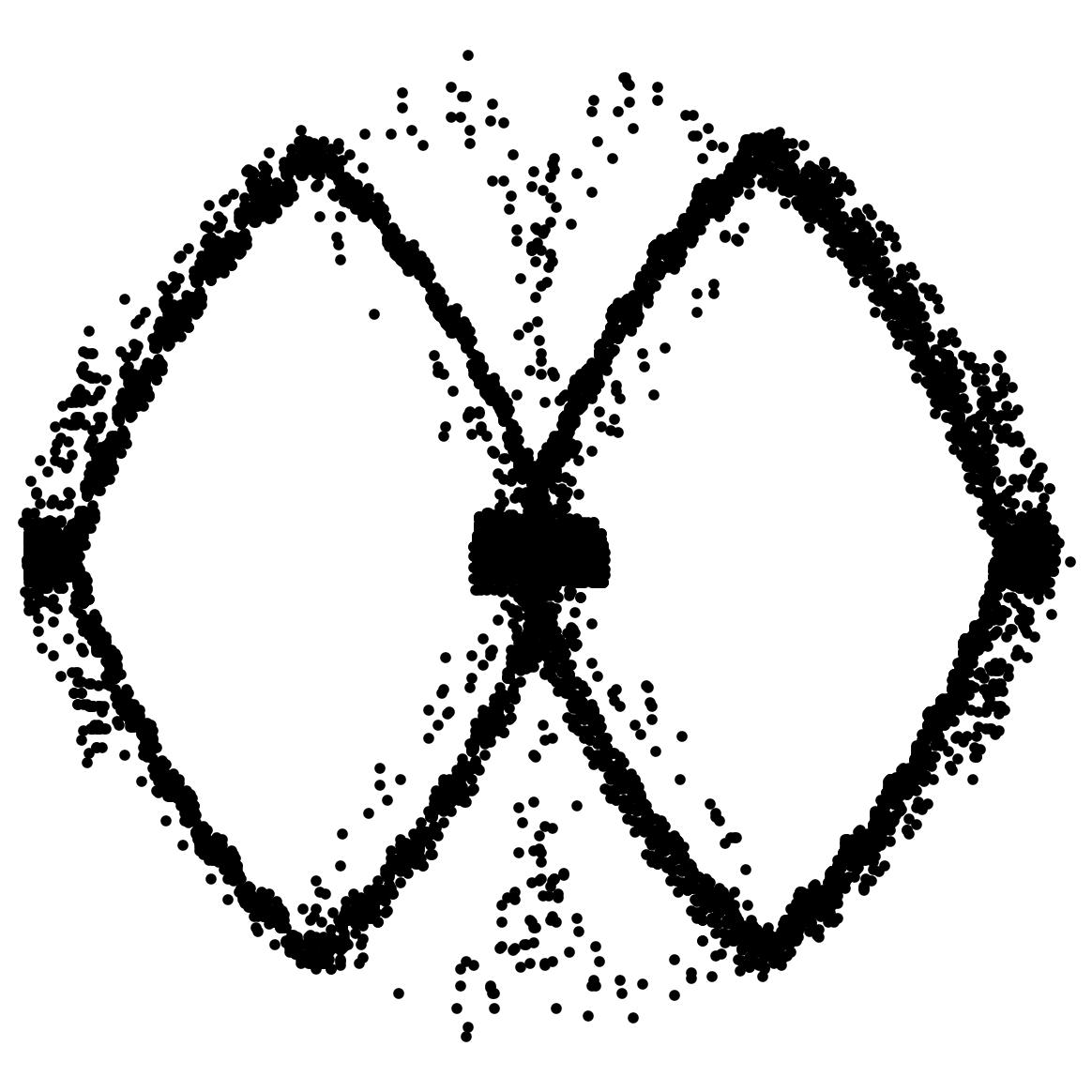}};
\end{tikzpicture}
\label{fig:examples}
\caption{Left: $v_n = \exp(2 \pi i \| \sqrt{3} n\|)$. Right: $v_n = \exp(2 \pi i \| \sqrt{10} n\|)$.}
\end{figure}
\end{center}

\subsection{More open problems.} Needless to say, the results presented up to now contain their fair number of associated problems. However, there are more. Indeed, it is worth noting that these types of greedy sequences
$$ x_{n} = \begin{cases} x_{n-1} + v_n \qquad &\mbox{if}\quad \|x_{n-1}+ v_n\| < \|x_{n-1} - v_n\| \\ x_{n-1} - v_n \qquad &\mbox{if}\quad \|x_{n-1}- v_n\| < \|x_{n-1} + v_n\| \end{cases}$$
appear to give rise to the most peculiar patterns. An example, starting in $x_0 = (0.01, 0.01)$ and using the sequence of vectors $v_n = \exp(2 \pi i \| \sqrt{3} n\|)$ with $\|\cdot\|$ denoting the distance to the nearest integer gives rise to points in a strange shape shown in Figure 7 (left).  Taking $v_n = \exp(2 \pi i \| \sqrt{10} n\|)$
gives rise to the shape shown in Figure 7 (right), both remain unexplained.

\begin{center}
\begin{figure}[h!]
\begin{tikzpicture}
\node at (0,0) {\includegraphics[width=0.35\textwidth]{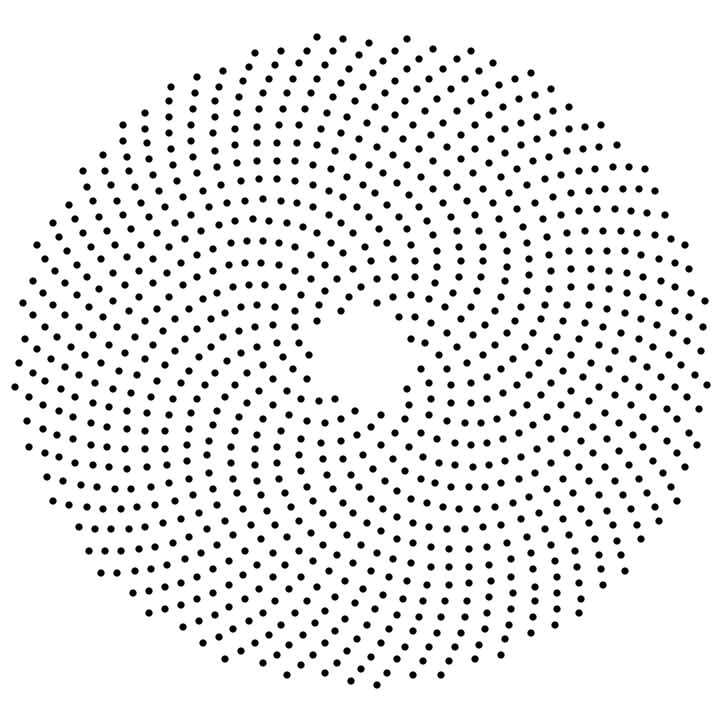}};
\node at (6,0) {\includegraphics[width=0.4\textwidth]{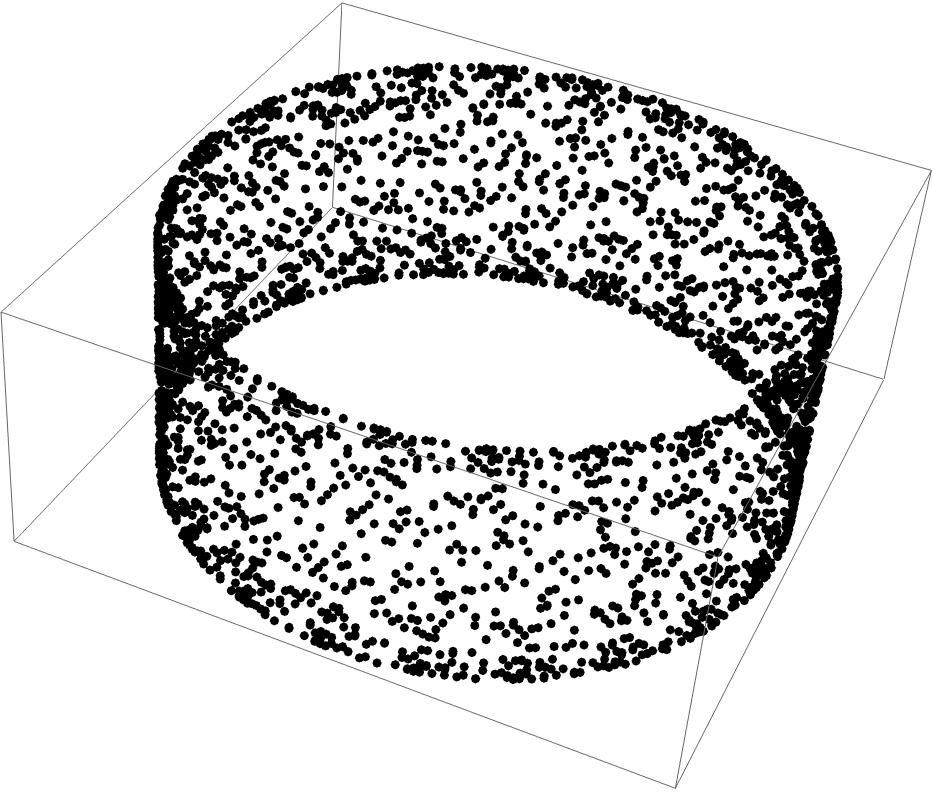}};
\end{tikzpicture}
\label{fig:examples}
\caption{Left: $v_n = \sqrt{n}\exp(2 \pi i \sqrt{2} n)$.  Right: $v_n = (\cos{n}, \sin{n}, \cos{\sqrt{n}})$.}
\end{figure}
\end{center}

All of these examples are two-dimensional with a sequence of vectors $(v_n)_{n=1}^{\infty}$ that has unit norm. We conclude by giving two examples that do not follow these constraints. First, we consider the sequence of vectors $v_n = \sqrt{n}\exp(2 \pi i \sqrt{2} n)$. The second example takes place in three dimensions with $v_n = (\cos{n}, \sin{n}, \cos{\sqrt{n}})$. We believe that these examples are indicative of a surprising richness and versatility of dynamical systems of this type, many open problems remain.

\section{Proof of Theorem 1}

\subsection{A stochastic representation} 
The purpose of this section is to give a slightly different way of phrasing the process. We can assume, using invariance under rotations, that
$$ x_n = (\|x_n\|, 0, 0, 0, \dots)$$
and that $x_{n+1} = x_n \pm v$ where $v = (v_1, \dots, v_d)$ is chosen uniformly at random from $\mathbb{S}^{d-1}$. Then
$$  x_n \pm v = (\|x_n\| \pm v_1,v_2,v_3,\dots,v_d)$$
and
$$ \|x_n \pm v\|^2 = (\|x_n\| \pm v_1)^2 + v_2^2 + \dots + v_d^2.$$
In this representation, it is clear that we will choose the sign so that 
\begin{align*}
 \|x_{n+1}\|^2 &= (\|x_n\| - |v_1|)^2  + v_2^2 + \dots + v_d^2 \\
 &= \|x_n\|^2 - 2 \cdot |v_1| \cdot \|x_n\| + 1.
 \end{align*}
 A classic way to sample uniformly at random from $\mathbb{S}^{d-1}$ is to pick $d$ independent $\mathcal{N}(0,1)$ Gaussians $(\gamma_1, \dots, \gamma_d)$ and then to normalize that vector. The first coordinate is distributed like
 $$ \frac{\gamma_1}{(\gamma_1^2 + \gamma_2^2 + \dots + \gamma_d^2)^{1/2}} =  \frac{\gamma_1}{(\gamma_1^2 + \chi^2(d-1))^{1/2}},$$
 where $\chi^2(d-1)$ is a chi-squared random variable of degree $d-1$. We may thus equivalently rewrite the process as
 $$  \|x_{n+1}\| = \left( \|x_n\|^2 - 2 \cdot \frac{|\gamma_1|}{(\gamma_1^2 + \chi^2(d-1))^{1/2}} \cdot \|x_n\| + 1 \right)^{1/2}$$
 which is a useful way of rewriting things.

 \subsection{Existence of a limiting measure $\pi$.}
 \begin{proof} Here we make use of the Harris chain framework \cite{harris}. We fix the dimension $d$.  A Markov chain on $\Omega = [0,\infty)$ with a stochastic transition kernel is called a Harris chain if there exists a set $A \subset \Omega$, a constant $\varepsilon > 0$ and a probability measure $\rho(\Omega) = 1$ such that
 \begin{enumerate}
 \item the hitting time $ \tau_A = \inf\left\{n \geq 0: X_n \in A \right\}$ is finite almost surely
 $$ \mathbb{P}(\tau_A < \infty| x_0 = x) = 1 \qquad \mbox{for all}~x \in \Omega.$$
 \item For any $x \in A$ and any $C \subset \Omega$, we have
 $$ \mathbb{P}\left(x_{n+1} \in C\big| x_n \in A\right) \geq \varepsilon \rho(C).$$
 \end{enumerate}
We choose
$$ A=\left[\sqrt{d}, \sqrt{d} + \frac{1}{2} \right].$$
 The first step is to prove that when starting in $x$, the expected hitting time $\tau_A$ is finite. 
 We first argue that if $0 \leq \|x_n\| \leq \sqrt{d}$, then there is a positive probability to hit $A$ in the next $2d$ steps. If we do not hit $A$, then either $0 \leq \|x_{n+2d}\| \leq \sqrt{d}$ in which case we can run the argument again or $\|x_{n+2d}\| \geq \sqrt{d} + 1/2$. We will then show, by means of a separate argument, that if $\|x_{n}\| \geq \sqrt{d} + 1/2$, then there is drift back to the origin that leads us back to the regime $\|x_{n+k}\| \leq \sqrt{d}$. 
 Let us first assume that $0 \leq \|x_n\| \leq \sqrt{d}$. We make use of the explicit representation
  $$  \|x_{n+1}\| = \left( \|x_n\|^2 - 2 \cdot \frac{|\gamma_1|}{(\gamma_1^2 + \chi^2(d-1))^{1/2}} \cdot \|x_n\| + 1 \right)^{1/2}.$$
 For each point $0 \leq \|x_n\| \leq \sqrt{d}$ there is a nonzero probability that the next $\sim 2d$ steps have $|\gamma_1| \leq 1/100$ and $\left| \chi^2(d-1) - d \right| \leq \sqrt{d}$ which leads to growth of the sequence. Note that
 $$ \|x_{n+1}\| \leq  \left( \|x_n\|^2 + 1 \right)^{1/2} \leq \|x_n\| + \frac{1}{2\|x_n\|}.$$
 This means that if we take $\sim 2d$ steps, we will surpass $\sqrt{d}$. Moreover, when $\|x_n\| \sim \sqrt{d}$, the jumps are of size $\lesssim d^{-1/2} < 1/2$ and we are sure to hit $A$. Let us now assume that $\| x_n \| > \sqrt{d} + 1/2$. It is clear from the representation and the argument above that then
 $$ \|x_n\| - 1 \leq \|x_{n+1}\| \leq \|x_n\| + \frac{1}{2\|x_n\|}.$$
 We argue that the process is much more likely to decrease. One has
\begin{align*}
 \|x_{n+1}\|^2 - \|x_n\|^2 &= - 2 \cdot \frac{|\gamma_1|}{(\gamma_1^2 + \chi^2(d-1))^{1/2}} \cdot \|x_n\| + 1 \\
 &\leq - 2 \cdot \frac{|\gamma_1|}{(\gamma_1^2 + \chi^2(d-1))^{1/2}} \cdot \sqrt{d} + 1.
 \end{align*}
 Thus, conditioning on $\|x_n\|^2 \geq \sqrt{d}$, we have
 $$ \mathbb{E} \left(  \|x_{n+1}\|^2 - \|x_n\|^2 \right) \leq 1 - 2 \cdot \mathbb{E}~ \frac{|\gamma_1| \sqrt{d}}{(\gamma_1^2 + \chi^2(d-1))^{1/2}}.$$
 In large dimensions, we can use that $\chi^2(d-1) = d \pm \mathcal{O}(\sqrt{d})$ to deduce that
 $$  \mathbb{E}~ \frac{|\gamma_1| \sqrt{d}}{(\gamma_1^2 + \chi^2(d-1))^{1/2}} = \sqrt{\frac{2}{\pi}} + o(1) > \frac{1}{2}$$
 which implies negative drift back to the origin. In small dimensions, one can replace $x_n > \sqrt{d}$ by $x_n > 10000 \sqrt{d}$ and obtain the same estimate with fairly crude bounds on the $\chi^2(d-1)$ distribution. As for the second part of the construction, we use a standard approach in the area and let $\rho$ be a multiple of the Lebesgue measure localized to an interval
 $$ \rho = 1_{\sqrt{d} -1/4 < x < \sqrt{d}} \cdot 4 dx.$$
 The representation 
$$  \|x_{n+1}\| = \left( \|x_n\|^2 - 2 \cdot \frac{|\gamma_1|}{(\gamma_1^2 + \chi^2(d-1))^{1/2}} \cdot \|x_n\| + 1 \right)^{1/2}$$
shows that $\|x_{n+1}\|$ can be anywhere in $[\|x_n\| - 1, \sqrt{\|x_n\|^2 + 1}]$ and it does so with a density that is uniformly bounded away from 0 in any strict subinterval. This shows that if $\sqrt{d} < \|x_n\| < \sqrt{d} +1/2$, then the likelihood of $\|x_{n+1}\|$ being somewhere in $[\sqrt{d} - 1/4, \sqrt{d}]$ is bounded away from 0 with positive probability. This shows that the process is a recurrent Harris chain.  It is easy to see from the definition that it is aperiodic. The Harris chain is recurrent and thus, see \cite[Theorem 10.0.1]{meyn}, it has a unique (up to multiples) invariant measure. Moreover, the argument also shows that it is positively recurrent.
\end{proof}

\subsection{A Lyapunov Lemma.}  The purpose of this section is to establish a simple inequality
for a Lyapunov function. We will then use it in the subsequent section to establish existence of all moments
and fast decay of $\pi_d$.

\begin{lemma}
Let $\alpha > 0$. Then, for some $c_d > 0$ depending only on the dimension
$$ \mathbb{E}~ e^{\alpha \|x_{n+1}\|^2} \leq \frac{ e^{\alpha} c_d}{ 2 \alpha \|x_n\|} \cdot e^{\alpha \|x_{n}\|^2}.$$
\end{lemma}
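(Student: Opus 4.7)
The plan is to exploit the explicit stochastic representation derived in \S 2.1. Writing $W = |v_1|$ for the absolute value of the first coordinate of a uniformly chosen $v \in \mathbb{S}^{d-1}$, the identity $\|x_{n+1}\|^2 = \|x_n\|^2 + 1 - 2W\|x_n\|$ gives the clean multiplicative factorization
$$ \mathbb{E}\, e^{\alpha \|x_{n+1}\|^2} \;=\; e^{\alpha (\|x_n\|^2 + 1)} \cdot \mathbb{E}\, e^{-2\alpha W \|x_n\|}. $$
The lemma therefore reduces to the one-dimensional Laplace-transform estimate $\mathbb{E}\, e^{-2\alpha W \|x_n\|} \leq c_d/(2\alpha \|x_n\|)$, a statement that no longer involves the dynamics at all and concerns only the law of $W$.

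The next step is to recall the classical density of $W$ on $[0,1]$,
$$ f_d(w) \;=\; \frac{2\, \Gamma(d/2)}{\sqrt{\pi}\, \Gamma((d-1)/2)} \, (1 - w^2)^{(d-3)/2}, $$
obtained by slicing $\mathbb{S}^{d-1}$ orthogonally to $e_1$. For $d \geq 3$ this function is non-increasing on $[0,1]$ and bounded by $f_d(0)$, and the desired estimate is immediate from
$$ \int_0^1 e^{-2\alpha w \|x_n\|} f_d(w)\, dw \;\leq\; f_d(0) \cdot \frac{1 - e^{-2\alpha \|x_n\|}}{2\alpha \|x_n\|} \;\leq\; \frac{f_d(0)}{2\alpha \|x_n\|}. $$
Thus $c_d = f_d(0)$ is admissible for every $d \geq 3$.

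The only mildly delicate case is $d = 2$, where $f_2(w) = 2/(\pi \sqrt{1 - w^2})$ carries an integrable singularity at $w = 1$ and the previous pointwise bound fails. The remedy is to split the range of integration at $w = 1/2$: on $[0, 1/2]$ the density is bounded by $4/(\pi \sqrt{3})$ and the same linear estimate applies; on $[1/2, 1]$ one replaces the exponential by $e^{-2\alpha w \|x_n\|} \leq e^{-\alpha \|x_n\|}$, which for $\alpha\|x_n\| \geq 1$ is dominated by any constant multiple of $1/(2\alpha\|x_n\|)$, while for $\alpha\|x_n\|$ bounded the whole left-hand side is bounded and the inequality becomes automatic once $c_2$ is chosen large enough. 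This splitting is the only technical point I anticipate, and it is routine rather than a real obstacle. The substance of the lemma lies in the multiplicative gain of the factor $1/\|x_n\|$, which quantifies the negative drift contributed by the $-2W\|x_n\|$ term and will fuel the moment bounds in the subsequent section.
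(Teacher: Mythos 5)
Your proof follows the same route as the paper: factor out $e^{\alpha(\|x_n\|^2+1)}$, reduce to the Laplace transform $\mathbb{E}\,e^{-tW}$ with $t=2\alpha\|x_n\|$, write down the marginal density $f_d(w)=c_d(1-w^2)^{(d-3)/2}$ of the first coordinate, bound the density by its value at $0$, and integrate the exponential to get a factor $1/t$. The one genuine difference is that you noticed the paper's pointwise bound $f_d(w)\le f_d(0)$ fails for $d=2$ (the density then has an integrable singularity at $w=1$), and you patched it by splitting the integral at $w=1/2$ and using $\sup_{t>0}2te^{-t}<\infty$ on the tail. That is a real, if minor, improvement: the paper states the lemma for all $d\ge 2$ but its one-line estimate $\int_0^1 e^{-tz}p_d(z)\,dz\le c_d\int_0^1 e^{-tz}\,dz$ silently assumes $d\ge 3$. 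Your fix is routine and correct — the constant produced for $d=2$ depends only on $d$, not on $\alpha$ or $\|x_n\|$, since $2te^{-t}\le 2/e$ — so the two proofs are otherwise identical in substance.
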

\begin{proof}
We use
$$  \|x_{n+1}\|^2 =  \|x_n\|^2 - 2 \cdot \frac{|\gamma_1|}{(\gamma_1^2 + \chi^2(d-1))^{1/2}} \cdot \|x_n\| + 1.$$
Rewriting this inequality, we get
\begin{align*}
 \mathbb{E}~ e^{\alpha \|x_{n+1}\|^2} &=  \mathbb{E}~ \exp\left( \alpha \|x_n\|^2 - 2\alpha \cdot \frac{|\gamma_1|}{(\gamma_1^2 + \chi^2(d-1))^{1/2}} \cdot \|x_n\| + \alpha \right) \\
 &= e^{\alpha \|x_n\|^2} ~ e^{\alpha} \cdot\mathbb{E} \exp \left(  -2\alpha\|x_n\| \cdot \frac{|\gamma_1|}{(\gamma_1^2 + \chi^2(d-1))^{1/2}} \right).
\end{align*}
We will try to bound this rather precisely by thinking of 
$$ X = \frac{|\gamma_1|}{(\gamma_1^2 + \chi^2(d-1))^{1/2}}$$
as the absolute value of a coordinate of a randomly chosen point on the sphere. To find the probability density of $X$, we let $A_d(r) = a_d r^d$ denote the surface area of a $d$-sphere with radius $r$, where $a_d \in \mathbb{R}$ is a dimension-dependent constant.  Firstly,
$$A_d(1) = \int_0^\pi A_{d-1}(\sin \theta) d\theta = \int_0^\pi a_{d-1} \sin^{d-1} \theta d\theta$$
Due to the fact there's an absolute value, we integrate solely on the top cap of the sphere, hence from $\arccos x$ to $\pi/2$.
\begin{align*}
    \mathbb{P}_d(X \le x) &= \frac{\int_{\arccos x}^{\frac{\pi}{2}} A_{d-2} (\sin \theta) d\theta}{\int_0^\frac{\pi}{2} A_{d-2} (\sin \theta) d\theta} = \frac{\int_{\arccos x}^{\frac{\pi}{2}} \sin^{d-2} \theta d\theta}{\int_0^{\frac{\pi}{2}} \sin^{d-2} \theta d\theta}\\
    &= \frac{2 \Gamma(\tfrac{d}{2})}{\sqrt{\pi} \Gamma(\frac{d-1}{2})} \int_{\arccos x}^{\frac{\pi}{2}} \sin^{d-2} \theta d\theta = c_d \int_{\arccos x}^{\frac{\pi}{2}} \sin^{d-2} \theta d\theta.
\end{align*}
Thus,
$$p_d(x) = \frac{d}{dx} \mathbb{P}_d(X \le x) = c_d (1 - x^2)^{\frac{d-3}{2}}.$$
Therefore, for any parameter $t > 0$, we have
\begin{align*}
 \mathbb{E} \exp \left(  -t \frac{|\gamma_1|}{(\gamma_1^2 + \chi^2(d-1))^{1/2}} \right) &= \int_0^1 e^{-t z} p_d(z) dz \\
 &\leq c_d\int_0^1 e^{-t z}  dz \\
 &\leq c_d \int_0^{\infty} e^{-t z} dz = \frac{c_d}{t}.
 \end{align*}
 Thus
 $$ \mathbb{E} \exp \left(  -2\alpha\|x_n\| \cdot \frac{|\gamma_1|}{(\gamma_1^2 + \chi^2(d-1))^{1/2}} \right) \leq \frac{c_d}{ 2 \alpha \|x_n\|}.$$
\end{proof}

\subsection{Boundedness of Exponential Moments}
We make use of the following Lemma from Meyn-Tweedie \cite{meyn}. We state it as it appears in \cite{meyn}, they use the abbreviations
$$PV(x) = \int_{\Omega} V(y)P(x, dy) \qquad \mbox{and} \qquad \pi(f) = \int_{0}^{\infty} f(x)\pi(dx) = \mathbb{E}_{\pi}[f(X)].$$
With this notation, the statement reads as follows.
\begin{lemma}[Theorem 14.3.7 \cite{meyn}] Suppose that $\Phi$ is positive recurrent with invariant probability $\pi$, and suppose that $V$, $f$ and $s$ are non-negative, finite-valued functions on $X$ such that
$$ PV(x) \leq V(x) - f(x) + s(x)$$
for every $s \in X$. Then $\pi(f) \leq \pi(s)$.
\end{lemma}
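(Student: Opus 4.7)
The most direct route is to integrate the drift inequality against the invariant measure: since $\pi$ is invariant, $\int PV\,d\pi=\int V\,d\pi$ whenever both sides make sense, and then $PV\le V-f+s$ would give, formally, $\pi(V)\le \pi(V)-\pi(f)+\pi(s)$, hence $\pi(f)\le\pi(s)$ after cancellation. The obstruction is that $\pi(V)$ may well be infinite, in which case the cancellation step is illegitimate. This is not a hypothetical concern for the intended application: the Lyapunov function we plan to plug in is $V(x)=e^{\alpha\|x\|^2}$, and finiteness of $\pi(V)$ is precisely what we want to conclude, so we cannot assume it.

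The plan is therefore to sidestep $\pi(V)$ altogether by iterating. Rewrite the drift inequality as $f(x)\le V(x)-PV(x)+s(x)$ and apply $P^k$ to both sides for $k=0,1,\dots,n-1$, then sum. The right-hand side telescopes, giving
\begin{equation*}
\sum_{k=0}^{n-1}P^kf(x)\;\le\;V(x)-P^nV(x)+\sum_{k=0}^{n-1}P^ks(x)\;\le\;V(x)+\sum_{k=0}^{n-1}P^ks(x),
\end{equation*}
where the last inequality uses $P^nV\ge 0$, which is where non-negativity of $V$ enters. Dividing by $n$ leaves
\begin{equation*}
\frac{1}{n}\sum_{k=0}^{n-1}P^kf(x)\;\le\;\frac{V(x)}{n}+\frac{1}{n}\sum_{k=0}^{n-1}P^ks(x),
\end{equation*}
and the boundary term $V(x)/n$ vanishes as $n\to\infty$ because $V$ is finite valued.

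The final step is to feed this into the Cesàro ergodic theorem for positive recurrent Markov chains (a standard result in Meyn-Tweedie), which asserts that for any non-negative measurable $g$ one has $\frac{1}{n}\sum_{k=0}^{n-1}P^kg(x)\to \pi(g)$ for $\pi$-a.e. $x$, with the limit interpreted in $[0,\infty]$. Taking $\liminf$ on the left with $g=f$ and $\limsup$ on the right with $g=s$ yields $\pi(f)\le\pi(s)$. The only real obstacle is justifying this passage to the limit when $\pi(f)$ or $\pi(s)$ might be infinite: one handles this cleanly by truncating $f_N=\min(f,N)$ and $s_N=\min(s,N)$ (noting that truncation preserves the one-sided ergodic convergence because $f_N,s_N$ are bounded), passing to the limit in $n$, and then sending $N\to\infty$ using monotone convergence on the left and Fatou's lemma on the right. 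Positive recurrence is used precisely to guarantee the ergodic theorem holds $\pi$-a.e., and non-negativity of $V,f,s$ is what makes the telescoping one-sided bound valid.
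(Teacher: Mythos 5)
The paper does not prove this lemma: it is quoted verbatim as Theorem~14.3.7 of Meyn--Tweedie \cite{meyn} and used as a black box, so there is no in-paper proof to compare against. Your reconstruction follows the standard ``comparison theorem'' route, and the core of it is right: you correctly diagnose why one cannot simply integrate $PV \le V - f + s$ against $\pi$ and cancel $\pi(V)$ (which may be infinite and is exactly what one wants to control), and the telescoping of the drift inequality into $\frac{1}{n}\sum_{k<n} P^k f(x) \le V(x)/n + \frac{1}{n}\sum_{k<n} P^k s(x)$ followed by a Ces\`aro limit is indeed how this kind of statement is proved.

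Two technical points in the last step are not as clean as the sketch suggests. First, applying $P^k$ to $f \le V - PV + s$ and splitting $P^k(V-PV+s)$ into $P^kV - P^{k+1}V + P^ks$ tacitly assumes no $\infty-\infty$; one should note $PV \le V + s$ with $V,s$ finite-valued and argue at $\pi$-a.e.\ $x$. Second, and more substantively, the truncate-and-limit argument at the end is asymmetric in a way your sketch glosses over. Replacing $f$ by $f_N=\min(f,N)$ on the left is fine ($f_N\le f$ only strengthens the inequality), and monotone convergence in $N$ recovers $\pi(f)$. But you cannot also truncate $s$ on the right, since $s_N\le s$ would weaken the inequality in the wrong direction; and ``Fatou's lemma on the right'' also goes the wrong way, as Fatou yields a $\liminf$ \emph{lower} bound for non-negative integrands whereas you need a $\limsup$ \emph{upper} bound on $\frac{1}{n}\sum_k P^k s(x)$. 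The repair is to dispose of the case $\pi(s)=\infty$ trivially, and for $\pi(s)<\infty$ invoke the $L^1(\pi)$ mean ergodic theorem for positive Harris chains to get $\frac{1}{n}\sum_{k<n}P^k s \to \pi(s)$ in $L^1(\pi)$, hence $\pi$-a.e.\ along a subsequence, and pass to the limit along that subsequence. With those repairs the argument goes through, and it is the same circle of ideas as Meyn--Tweedie's proof.
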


We will now use this to prove that for every $\alpha > 0$
$$ \int_{0}^{\infty} e^{\alpha x^2} d\pi_d(x) < \infty.$$

\begin{proof}
We choose $V(x) = e^{\alpha x^2}$. The Lemma in the previous section shows that
for all $x$ sufficiently large (depending only on the dimension)
$$PV(x)  \leq \frac{ e^{\alpha} c_d}{ 2 \alpha x} \cdot V(x).$$
This estimate is not particularly good when $0 \leq x \leq 1$ in which case, we can always use that
$x_n \leq 2$ implies that $x_{n+1} \leq 3$ and thus $x \leq 2 \implies PV(x)  \leq C.$
Let $z = e^{\alpha} c_d/\alpha$. Then, for $x \geq z$, we have $PV(x) \leq V(x)/2$. Thus, after possibly increasing the constant $c_d$, we get
$$PV(x)  \leq \frac{ e^{\alpha} c_d}{ 2 \alpha \max\left\{x,1\right\}} \cdot V(x).$$
This means that
$$ f(x) = \frac{V(x)}{2} 1_{x > z} \qquad \mbox{and} \qquad s(x) = \frac{ e^{\alpha} c_d}{ 2 \alpha x} \cdot V(x) 1_{x < z}$$
are valid choices to ensure 
$$ PV(x) \leq V(x) - f(x) + s(x)$$
for all $x \geq 0$. The Lemma implies that
$$ \pi_d(f) =  \frac{1}{2} \int_{z}^{\infty} V(x) d \pi_d(x)  \leq \int_{0}^{z}  \frac{ e^{\alpha} c_d}{ 2 \alpha \max\left\{x,1\right\}} V(x) d\pi_d(x) = \pi_d(s) < \infty.$$
However,
$$  \int_{0}^{\infty} e^{\alpha x^2} d\pi_d(x) =  \int_{0}^{z} e^{\alpha x^2} d\pi_d(x) +  \int_{z}^{\infty} e^{\alpha x^2} d\pi_d(x)$$
and the first integral is $\leq e^{\alpha z^2} < \infty$ and the second integral is finite and we have the desired result.
\end{proof}

 \subsection{Computing the mean (informally).}
The decay estimate from the preceding section implies that the second moment exists. We can use this to determine the mean exactly. We first give an informal estimate. Using
 $$  \|x_{n+1}\|^2 =  \|x_n\|^2 - 2 \cdot \frac{|\gamma_1|}{(\gamma_1^2 + \chi^2(d-1))^{1/2}} \cdot \|x_n\| + 1$$
  and iterating this recursion, we deduce
  $$ \|x_{n+k}\|^2 - \|x_n\|^2  =  k - \sum_{\ell=0}^{k-1}  2 \cdot \frac{|\gamma_1|}{(\gamma_1^2 + \chi^2(d-1))^{1/2}} \cdot \|x_n\|.$$
 Taking an average, we have
 $$  \frac{ \|x_{n+k}\|^2 - \|x_n\|^2}{k}  =  1 -  \frac{1}{k}\sum_{\ell=0}^{k-1}  2 \cdot \frac{|\gamma_1|}{(\gamma_1^2 + \chi^2(d-1))^{1/2}} \cdot \|x_n\|.$$
Since the second moment exists, we can take an expectation on both sides and let $k \rightarrow \infty$ to deduce that
$$ 0 = 1 - \mathbb{E} \left( 2 \cdot \frac{|\gamma_1|}{(\gamma_1^2 + \chi^2(d-1))^{1/2}} \right) \mathbb{E}\|x_n\|$$
 which suggests that the first moment is 
 $$ \int_0^{\infty} x ~d\pi_d =  \left( \mathbb{E} \frac{  2 |\gamma_1|}{(\gamma_1^2 + \chi^2(d-1))^{1/2}} \right)^{-1}.$$
 This formula can also be used to deduce the asymptotic behavior: as $d$ increases, we get tight concentration of the $\chi^2-$random variable $ \chi^2(d-1) \sim d \pm \sqrt{d}$
 suggesting 
 $$ \mathbb{E} \frac{  2 |\gamma_1|}{(\gamma_1^2 + \chi^2(d-1))^{1/2}} \sim \mathbb{E} \frac{  2 |\gamma_1|}{\sqrt{d}} = \frac{2}{\sqrt{d}} \sqrt{\frac{2}{\pi}}$$
which suggests that, as $d \rightarrow \infty$,
$$ \mathbb{E} \|x_n\| = \frac{\sqrt{\pi}}{\sqrt{8}} \sqrt{d} + o(\sqrt{d}).$$
 This is indeed the case, we will later determine the mean exactly.

\subsection{Another stochastic representation} As is perhaps unsurprising, we could also try to formulate many of these arguments using an explicit representation of the transition kernel.

\begin{lemma}
The stationary distribution $\pi$ satisfies
    \begin{equation*}
        \pi(y) = \left\{ \begin{array}{ll}
            \int_{1-y}^{y+1} P_d(x, y) \pi (x) dx & \quad \text{if } 0 \le y < 1\\
            \int_{\sqrt{y^2-1}}^{y+1} P_d(x, y) \pi(x) dx & \quad \text{if } y \ge 1
        \end{array} \right.
    \end{equation*}
    where $P_d$ is the transition kernel in $d$ dimensions:
    $$P_d(x, y) = \frac{2 \Gamma(\tfrac{d}{2})}{\sqrt{\pi} \Gamma(\frac{d-1}{2}) } \frac{y}{x} \left(1 - \left( \frac{x^2 - y^2 + 1}{2x} \right)^2\right)^\frac{d-3}{2}.$$
\end{lemma}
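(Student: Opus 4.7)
My plan is to derive the formula by a direct change of variables from the stochastic representation already established. From the previous lemma, the random variable
$$X = \frac{|\gamma_1|}{(\gamma_1^2 + \chi^2(d-1))^{1/2}}$$
has density $p_d(z) = c_d(1-z^2)^{(d-3)/2}$ on $[0,1]$ with $c_d = 2\Gamma(d/2)/(\sqrt{\pi}\,\Gamma((d-1)/2))$. Conditional on $\|x_n\|=x$, the stochastic recursion becomes the deterministic smooth transformation $y := \|x_{n+1}\| = \sqrt{x^2 - 2zx + 1}$, so computing $P_d(x,y)$ reduces to pushing $p_d$ forward under this map.

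First I would carry out the change of variables. Inverting gives $z = (x^2 - y^2 + 1)/(2x)$, and differentiating $y^2 = x^2 - 2zx + 1$ at fixed $x$ yields $|dz/dy| = y/x$. Substituting back produces
$$P_d(x,y) = p_d(z(y))\cdot \frac{y}{x} = \frac{c_d\,y}{x}\left(1 - \left(\frac{x^2 - y^2 + 1}{2x}\right)^2\right)^{(d-3)/2},$$
which is the stated kernel.

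Second, I would identify the support. Since $z$ sweeps $[0,1]$, for fixed $x$ the image $y$ sweeps $[|x-1|,\sqrt{x^2+1}]$. Inverting: for fixed target $y \geq 0$, the condition $|x-1|\leq y$ gives $x \in [1-y,1+y]\cap[0,\infty)$, and $y \leq \sqrt{x^2+1}$ gives $x \geq \sqrt{\max(y^2-1,0)}$. When $y < 1$ the second constraint is vacuous and the support is $[1-y, 1+y]$; when $y \geq 1$, we have $1-y \leq 0 \leq \sqrt{y^2-1}$, so the support is $[\sqrt{y^2-1}, y+1]$. Writing out the stationarity identity $\pi(y) = \int P_d(x,y)\,\pi(x)\,dx$ restricted to each support then gives the two cases of the lemma.

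The argument is essentially bookkeeping once the density $p_d$ is in hand, so I do not anticipate a serious obstacle; the only delicate point is integrability at the endpoints where $1-((x^2-y^2+1)/(2x))^2$ vanishes, but the exponent $(d-3)/2$ makes the integrand vanish for $d \geq 3$ and only produces an integrable square-root singularity at $d=2$, so the pointwise identity is valid for all $y > 0$.
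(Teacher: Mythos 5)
Your proposal is correct and follows essentially the same route as the paper: both identify the support of the kernel from the range of $y = \sqrt{x^2 - 2zx + 1}$ as $z$ sweeps $[0,1]$, and both compute $P_d(x,y)$ by a change of variables from the density of the first spherical coordinate (you push forward $p_d(z)$ directly, whereas the paper differentiates the spherical-cap CDF $c_d\int_0^{\theta(x,y)}\sin^{d-2}\theta\,d\theta$ in $y$; these are the same Jacobian computation under $z=\cos\theta$). Your added remark on integrability of the endpoint singularity for $d=2$ is a sensible point the paper leaves implicit.
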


\begin{proof}
    By definition of stationarity, $\pi$ must satisfy
    $$\pi(y) = \int_{-\infty}^\infty P_d(x, y) \pi(x) dx$$
    As a brief note on bounds of integration, if $0 \le y < 1$ then we can either step down to it from $y + 1$ or step through the origin from $1 - y$, and if $y \ge 1$ then we can again either step down to it from $y + 1$ or step up to it from $\sqrt{y^2 - 1}$.
    Recall that we can interpret $P_d(Y \le y | X = x)$ geometrically as the surface area ratio of a spherical cap to a hemisphere (in $d - 1$ dimensions). The angle defining this cap is given by the law of cosines to be
    $$\theta(x, y) = \arccos \left( \frac{x^2 - y^2 + 1}{2x} \right)$$
    Using the same construction as we did for Lemma 1, we have
    $$\mathbb{P}_d(Y \le y | X = x) = c_d \int_0^{\theta(x, y)} \sin^{d-2} \theta d\theta$$
    This gives
    $$P_d(x, y) = \frac{d}{dy} P_d(Y \le y | X = x) = c_d \sin^{d-2} \theta(x, y) \frac{\partial}{\partial y} \theta(x, y)$$
    where
    \begin{align*}
           \sin \theta(x, y) &= \left(1 - \left( \frac{x^2 - y^2 + 1}{2x} \right)^2\right)^\frac{1}{2} \\
              \frac{\partial}{\partial y} \theta(x, y) &= \frac{y}{x}\left(1 - \left( \frac{x^2 - y^2 + 1}{2x} \right)^2\right)^{-\frac{1}{2}}
    \end{align*}
    Combining this all together, we obtain
    $$P_d(x, y) = c_d \frac{y}{x} \left(1 - \left( \frac{x^2 - y^2 + 1}{2x} \right)^2\right)^\frac{d-3}{2}.$$
\end{proof}

\textbf{Remark.} There is one particularly nice special case: when $d=3$, the transition kernel is 
$$ P_3(x,y) = \frac{y}{x}$$
and the invariant measure satisfies, for $y \geq 1$,
$$ \pi_3(y)  =  \int_{\sqrt{y^2-1}}^{y+1} \frac{y}{x} \pi_3(x) dx.$$
This can be rewritten as the curious identity
$$\forall~y \geq 1 \qquad \qquad  \frac{\pi_3(y)}{y} = \int_{\sqrt{y^2-1}}^{y+1} \frac{\pi_3(x)}{x} dx.$$
 It is a natural question whether this identity could be used to derive more precise results when $d=3$.

\subsection{Computing the mean (formally).}

As we did with our previous mean calculation, we begin by setting up
$$\|x_{n+1}\|^2 =  \|x_n\|^2 - 2 \cdot \frac{|\gamma_1|}{(\gamma_1^2 + \chi^2(d-1))^{1/2}} \cdot \|x_n\| + 1$$
Taking the expectation we get
$$\mathbb{E}[\|x_{n+1}\|^2] =  \mathbb{E}[\|x_n\|^2] - 2 \mathbb{E}[X] \mathbb{E}[\|x_n\|] + 1$$
where $X$ is shown in the proof for Lemma 1 to satisfy
$$p_d(x) = c_d (1 - x^2)^{\frac{d-3}{2}}$$
Letting $n \to \infty$, since we have stationarity and finite second second moments, we can write
$$\mathbb{E}[\|x_n\|] = \frac{1}{2 \mathbb{E}[X]}$$
We have
$$\mathbb{E}[X] = c_d \int_0^1 x (1 - x^2)^{\frac{d-3}{2}} dx = \frac{c_d}{2} \int_0^1 (1 - u)^{\frac{d-3}{2}} dx = \frac{c_d}{2} \cdot B\left(1, \frac{d-1}{2} \right)$$
where $B(x, y) = \int_0^1 t^{x-1} (1-t)^{y-1} dt$ denotes the Beta function. Thus,
$$\mathbb{E}[X] = \frac{c_d}{2} \cdot \frac{\Gamma(1)\Gamma(\frac{d-1}{2})}{\Gamma(1 + \frac{d-1}{2})} = \frac{c_d}{2} \cdot \frac{\Gamma(\frac{d-1}{2})}{\Gamma(\frac{d+1}{2})} = \frac{\Gamma(\tfrac{d}{2})}{\sqrt{\pi} \Gamma(\frac{d+1}{2})}$$
Putting this all together,
$$\mathbb{E}[\|x_n\|] = \frac{\sqrt{\pi} \Gamma(\frac{d+1}{2})}{2\Gamma(\tfrac{d}{2})}.$$

\section{Van der Corput: base 2}
For this section, $(a_n)_{n \in \mathbb{N}}$ corresponds to the base 2 van der Corput sequence. 

\subsection{Understanding the different regimes} This subsection focuses on proving a set of results that we require for the proof of Theorem~\ref{th:general}. These results will be slightly more general than necessary to prove Theorem~\ref{th:general}, in particular Lemma~\ref{lemma:doublestep}, but provide a complete picture of the behavior of the sequence. We point out to the reader that while the van der Corput sequence is defined from 0 onwards, our process begins in $z_{-1}$. As such, there is an offset of 1 in all the modulo conditions: the first step from $z_{-1}$ to $z_{0}$ uses $\Vdc_2(0)$. We begin by showing that every 2 points, the norm cannot have increased.

\begin{proposition}\label{prop:stdec}
    For all $z_{-1}\in \mathbb{R}^2$, for all $n\in \mathbb{N}$, either the sequence is indeterminate or $|z_{2n-1}|\geq |z_{2n+1}|$, with equality if and only if $z_{2n-1}=z_{2n+1}$.
\end{proposition}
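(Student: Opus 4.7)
The plan is to exploit the base-$2$ identity $\Vdc_2(2k+1) = \Vdc_2(2k) + \tfrac{1}{2}$, which forces
$$ e^{2\pi i \,\Vdc_2(2n+1)} \;=\; -\,e^{2\pi i \,\Vdc_2(2n)}. $$
Consequently, writing $u$ for the unit vector used at step $2n$, the vector at step $2n+1$ is exactly $-u$. Since the greedy rule is free to choose either sign anyway, the two-step transition from $z_{2n-1}$ to $z_{2n+1}$ is equivalent to greedily choosing the sign of $\pm u$ twice in a row about the same axis. This collapses the two-dimensional question into a one-dimensional one.

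I would then set $z := z_{2n-1}$, write $a := |\langle z, u\rangle|$, and spell out the greedy choices:
$$ z_{2n} = z + \varepsilon_1 u, \qquad z_{2n+1} = z_{2n} + \delta u, $$
with $\varepsilon_1 = -\mathrm{sign}\langle z,u\rangle$ and $\delta = -\mathrm{sign}\langle z_{2n},u\rangle$. A direct computation gives $\langle z_{2n}, u\rangle = \mathrm{sign}(\langle z, u\rangle)\,(a-1)$, so whether $\delta$ agrees with or opposes $\varepsilon_1$ is dictated by the trichotomy $a<1$, $a=1$, $a>1$.

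Then I would finish by case analysis. If $a<1$, then $\delta=-\varepsilon_1$, hence $z_{2n+1}=z$, giving equality (and in fact coincidence of the two points). If $a>1$, then $\delta=\varepsilon_1$, so $z_{2n+1}=z+2\varepsilon_1 u$ and expanding gives
$$ |z_{2n+1}|^2 = |z|^2 + 4 - 4a < |z|^2. $$
If $a=1$, then $\langle z_{2n}, u\rangle = 0$, meaning $|z_{2n}+u|=|z_{2n}-u|$ and the sequence is indeterminate at step $2n+1$, which is exactly the excluded alternative. These three subcases together yield both the inequality and the equality characterization.

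There is no real obstacle here: the only subtlety is keeping the bookkeeping of signs consistent and recognizing that the borderline $a=1$ coincides exactly with the indeterminacy condition in the hypothesis. The whole argument rests on the antipodal structure of consecutive base-$2$ van der Corput vectors, a feature which will not be available for $b\geq 3$ and which explains why the base-$2$ case is qualitatively simpler.
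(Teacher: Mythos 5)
Your proof is correct and is essentially the same as the paper's: both exploit the identity $\Vdc_2(2n+1)=\Vdc_2(2n)+\tfrac12$ to see that the two consecutive steps are collinear, so that one of the two candidates for $z_{2n+1}$ is $z_{2n-1}$ itself, and the trichotomy (other candidate strictly smaller / equal / strictly larger) gives strict decrease / indeterminacy / equality with coincidence. You phrase it algebraically via $a=|\langle z,u\rangle|$ while the paper phrases it geometrically via the reflection of $z_{2n-1}$ across $z_{2n}$, but the underlying argument is identical.
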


\begin{proof}
    Consider any starting position $z_{-1}\in \mathbb{R}$ and $n\in \mathbb{N}$. By definition of the van der Corput sequence, $\Vdc_2(2n)=\Vdc_2(2n+1)-0.5$. $z_{2n-1}$, $z_{2n}$ and $z_{2n+1}$ are aligned, and the two possible choices for $+$ and $-$ in $z_{2n}$ are exactly $z_{2n-1}$ and its symmetric relative to $z_{2n}$. If the symmetric has strictly smaller norm, it is chosen as $z_{2n+1}$. If it has equal norm, the sequence is no longer determined. If it has strictly higher norm, then $z_{2n+1}=z_{2n-1}$. These three cases prove the desired result.
\end{proof}

To obtain a guarantee on the decrease of the norm, we need to study more than 2 consecutive steps. The following Lemmas study blocks of 4 or 8 steps to cover all possible cases. This relies on the following observation: for $k=0$ modulo $b^j$, $\Vdc_b(k+b^{j-1})-\Vdc_b(k)=1/b^{j}$.

\begin{lemma}\label{lemma:doublestep}
    Let $k \equiv -1$ modulo 4. Then if $z_k$ exists and $|z_k|<\sqrt{2}$, we have at least one of the following:
    \begin{itemize}
        \item[1.] $z_k=z_{k+2}$
        \item[2.] $z_{k+2}=z_{k+4}$.
    \end{itemize}
\end{lemma}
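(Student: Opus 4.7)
The plan is to exploit the precise structure of $\Vdc_2$ when $k+1$ is a multiple of $4$. Writing $k+1 = 4m$, a direct computation from the binary expansion gives
\[
\Vdc_2(4m) = \tfrac{\Vdc_2(m)}{4}, \quad \Vdc_2(4m+1) = \tfrac{1}{2}+\tfrac{\Vdc_2(m)}{4}, \quad \Vdc_2(4m+2) = \tfrac{1}{4}+\tfrac{\Vdc_2(m)}{4}, \quad \Vdc_2(4m+3) = \tfrac{3}{4}+\tfrac{\Vdc_2(m)}{4}.
\]
Setting $v = \exp(2\pi i \Vdc_2(m)/4) \in \mathbb{S}^1$, the four unit vectors driving the transitions $z_k \to z_{k+1} \to z_{k+2} \to z_{k+3} \to z_{k+4}$ are therefore $v, -v, iv, -iv$. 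In particular, the two-step passage $z_k \to z_{k+2}$ only moves along the real axis spanned by $v$, while $z_{k+2} \to z_{k+4}$ only moves along the orthogonal axis spanned by $iv$.

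The second ingredient is a deterministic description of any such ``double step'' along two antipodal unit vectors. Given $\zeta \in \mathbb{R}^2$ and a unit vector $u$, let $\zeta'$ denote the result of two consecutive greedy $\pm u$ moves starting from $\zeta$. An elementary case split (essentially the content of Proposition~\ref{prop:stdec}) shows that $\zeta' = \zeta$ when $0 < |\langle \zeta, u\rangle| < 1$, while $\zeta' = \zeta - 2\operatorname{sgn}(\langle \zeta, u\rangle)\, u$ when $|\langle \zeta, u\rangle| > 1$; the boundary values $|\langle \zeta, u\rangle| \in \{0, 1\}$ produce ties and are excluded by the hypothesis that the subsequent $z_j$ exist. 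In either determinate case, $\zeta' - \zeta$ is parallel to $u$, so the component of $\zeta$ orthogonal to $u$ is preserved.

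To conclude, decompose $z_k = \alpha v + \beta\, iv$ with $\alpha = \langle z_k, v\rangle$ and $\beta = \langle z_k, iv\rangle$, so that $\alpha^2 + \beta^2 = |z_k|^2 < 2$. Apply the double-step description with $u = v$: if $|\alpha| < 1$ then $z_{k+2} = z_k$, which is case $1$ of the lemma. Otherwise $|\alpha| > 1$, and combined with $\alpha^2 + \beta^2 < 2$ this forces $|\beta| < 1$. Since the first double step is parallel to $v$, we have $\langle z_{k+2}, iv\rangle = \langle z_k, iv\rangle = \beta$, so $|\langle z_{k+2}, iv\rangle| < 1$. A second application of the double-step description, now with $u = iv$, yields $z_{k+4} = z_{k+2}$, which is case $2$.

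The main obstacle is essentially just bookkeeping: verifying the exact form of the four $\Vdc_2$ values and tracking the $\pm$ conventions so that the two-step analysis is applied consistently. Once the orthogonality $v \perp iv$ is set up and the bound $|z_k|^2 < 2$ is in hand, the dichotomy $|\alpha| \lessgtr 1$ gives the two cases of the lemma with essentially no further work.
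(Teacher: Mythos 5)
Your proof is correct, and it takes a genuinely different route from the paper. The paper argues by contradiction: if $z_k \neq z_{k+2}$ and $z_{k+2} \neq z_{k+4}$, then Proposition~\ref{prop:stdec} forces strict decrease $|z_{k+4}| < |z_{k+2}| < |z_k|$, but the three points form a right triangle with two legs of length $2$, hence $|z_{k+4} - z_k| = 2\sqrt{2}$, and the triangle inequality combined with $|z_k| < \sqrt{2}$ gives $|z_{k+4}| > \sqrt{2} > |z_k|$, a contradiction. You instead decompose $z_k = \alpha v + \beta\, iv$ in the orthonormal basis $\{v, iv\}$ determined by the four step directions, give an explicit formula for the double step (stay put if $0 < |\langle\zeta,u\rangle| < 1$, jump by $-2\operatorname{sgn}(\langle\zeta,u\rangle)u$ if $|\langle\zeta,u\rangle| > 1$), and run a clean dichotomy on $|\alpha|$ using $\alpha^2 + \beta^2 < 2$. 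Both are valid; the paper's contradiction is a little shorter once Proposition~\ref{prop:stdec} is in hand, while your direct computation is more explicit, correctly flags the degenerate values $|\langle\zeta,u\rangle|\in\{0,1\}$ as exactly the indeterminate cases, and in fact yields strictly more (the precise form of $z_{k+2}$ and $z_{k+4}$, not just the stated alternative). One small stylistic note: the second move in each pair is along $-u$ rather than $u$, but since greedy $\pm u$ and greedy $\pm(-u)$ generate the same two candidates this does not affect the argument; it would be worth one sentence to say so.
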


\begin{proof}
    Let $k \equiv -1$ modulo 4. By the definition of the van der Corput sequence, we know $\Vdc_b(k+1)=\Vdc_b(k)+0.5$, $\Vdc_b(k+3)=\Vdc_b(k+2)+0.5$ and $\Vdc_b(k+2)=\Vdc_b(k)+0.25$. In other words, the first two steps will be along the same line $D_1$, and the next two steps along the line $D_2$ orthogonal to $D_1$ and intersecting it in $z_{k+2}$. We will prove the result by contradiction.
    If $z_k \neq z_{k+2}$ and $z_{k+2} \neq z_{k+4}$, then $|z_{k+4}|<|z_{k+2}|<|z_{k}|$ by Proposition~\ref{prop:stdec}. Since none of $z_{k}$, $z_{k+2}$ and $z_{k+4}$ are equal to each other, the steps in each pair of steps were done in the same direction. We have $|z_{k+2}-z_{k}|=2$ and  $|z_{k+4}-z_{k+2}|=2$ and the three points $z_{k}$, $z_{k+2}$ and $z_{k+4}$ form a triangle with $\widehat{z_{k+4}z_{k+2}z_{k}}=\pi/2$ modulo $\pi$.  We therefore have $|z_{k+4}-z_{k}|=2\sqrt{2}$ and, by hypothesis, $|z_{k}|<\sqrt{2}$. This gives $|z_{k+4}|>\sqrt{2}>|z_{k}|$, a contradiction.
\end{proof}

\begin{lemma}\label{lemma:failstep}
    Let $k \equiv -1$ modulo 4. Suppose that $z_k$ exists and that $|z_k|=r>1$. If $z_{k}=z_{k+2}$, then 
    $$|z_k|-|z_{k+4}|>r-\sqrt{r^2+4-4\sqrt{r^2-1}}.$$ Furthermore, if $\sqrt{2}<|z_k|<2$, then $z_k \neq z_{k+4}$ and $|z_{k+4}|<\sqrt{2}$.
\end{lemma}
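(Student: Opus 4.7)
The plan is to exploit the orthogonal structure already set up in the proof of Lemma~\ref{lemma:doublestep}: the first two steps place $z_{k+1}$ and $z_{k+2}$ on a line $D_1$ through $z_k$, and the last two steps place $z_{k+3}$ and $z_{k+4}$ on a line $D_2 \perp D_1$ through $z_{k+2}$. Since the hypothesis is $z_k = z_{k+2}$, both $D_1$ and $D_2$ pass through $z_k$ and give an orthonormal frame based at that point. First I would set coordinates by writing $z_k = au + bu'$, where $u$ is a unit vector along $D_1$, $u'$ is a unit vector along $D_2$, and the signs of $u, u'$ are chosen so that $a \geq 0$ and $b \geq 0$. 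We have $a^2 + b^2 = r^2$.

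Next I would translate the greedy rule into inequalities on $a$ and $b$. The two conditions $|z_k - u| \leq |z_k + u|$ (so that $z_{k+1} = z_k - u$) and $|z_k| \leq |z_k - 2u|$ (so that the second step returns to $z_k = z_{k+2}$) reduce after expansion to $0 \leq a \leq 1$. The hypothesis that the sequence exists (i.e.\ no tie) forces $a < 1$ strictly, and combined with $a^2 + b^2 = r^2$ and $r > 1$ this gives $b > \sqrt{r^2 - 1}$.

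Running the same analysis along $D_2$, the greedy choice forces $z_{k+3} = z_k - u'$, and then $z_{k+4}$ is either $z_k$ (when $b < 1$) or $z_k - 2u'$ (when $b > 1$), with $b = 1$ excluded by non-degeneracy. In the first subcase, the constraint $\sqrt{r^2-1} < b \leq 1$ forces $r < \sqrt{2}$; but for $r < \sqrt{2}$ the quantity $r - \sqrt{r^2 + 4 - 4\sqrt{r^2-1}}$ is strictly negative (since then $4 - 4\sqrt{r^2-1} > 0$), while Proposition~\ref{prop:stdec} gives $|z_{k+4}| \leq r$, so the claimed inequality holds automatically. In the second subcase, I compute $|z_{k+4}|^2 = a^2 + (b-2)^2 = r^2 + 4 - 4b < r^2 + 4 - 4\sqrt{r^2 - 1}$ using $b > \sqrt{r^2-1}$, which yields exactly the desired strict bound after taking square roots.

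Finally, for the ``furthermore'' clause, the condition $r > \sqrt{2}$ forces $b > \sqrt{r^2 - 1} > 1$, so we are always in the second subcase and in particular $z_{k+4} = z_k - 2u' \neq z_k$. To upgrade the bound to $|z_{k+4}| < \sqrt{2}$, it suffices to check $r^2 + 4 - 4\sqrt{r^2 - 1} \leq 2$, equivalently $r^2 + 2 \leq 4\sqrt{r^2 - 1}$; squaring turns this into $r^4 - 12 r^2 + 20 \leq 0$, a quadratic in $r^2$ with roots at $r^2 = 2$ and $r^2 = 10$, and the inequality holds on the entire window $\sqrt{2} \leq r \leq \sqrt{10}$, which comfortably contains $(\sqrt{2}, 2)$. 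The main obstacle is really just bookkeeping: making sure the sign conventions on $u$ and $u'$ are consistent and that the indeterminate boundaries ($a = 1$ or $b = 1$) are correctly excluded so that all the inequalities remain strict.
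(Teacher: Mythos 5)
Your proof is correct and reaches the same formula $\sqrt{r^2 + 4 - 4\sqrt{r^2 - 1}}$, but the route is genuinely different in presentation. The paper works in a fixed frame ($z_k = (r,0)$) and argues geometrically: it locates the origin relative to two circles, identifies the ``double cone'' of admissible first-step directions via the angle $\alpha$ with $\sin\alpha = 1/r$, and then finds the extremal point on the arc $CA$ of possible fourth-step landing positions. Your version instead adapts coordinates to the step directions themselves, writing $z_k = au + bu'$ in the $D_1$--$D_2$ frame with $a^2 + b^2 = r^2$, and converts the greedy choices directly into the inequalities $0 < a < 1$ (hence $b > \sqrt{r^2-1}$), then computes $|z_{k+4}|^2 = r^2 + 4 - 4b$ in the case $b > 1$. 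This has two concrete advantages. First, your treatment explicitly handles the subcase $z_{k+4} = z_k$ (which occurs when $b < 1$, forcing $r < \sqrt{2}$), whereas the paper's argument about the maximum of $|z_{k+4}|$ over the arc $CA$ implicitly presumes $z_{k+4}$ lies on that arc at distance $2$ from $z_k$, leaving the $z_{k+4} = z_k$ possibility to be inferred from context. Second, for the ``furthermore'' clause, the paper switches to a separate geometric argument with bands, rectangles, and the inscribed unit square $S_2$; you instead obtain $|z_{k+4}| < \sqrt{2}$ from the already-derived bound by checking $r^2 + 4 - 4\sqrt{r^2-1} \le 2$ on $\sqrt{2} \le r \le \sqrt{10}$, which is a cleaner deduction and also gives the nondegeneracy $z_{k+4} \ne z_k$ directly from $b > 1$. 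The paper's pictures build useful intuition, but your algebraic version is tighter and more self-contained.
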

\begin{proof}
     Let $k \equiv -1$ modulo 4 such that $|z_k|=r>1$ and $z_{k}=z_{k+2}$. Firstly, we suppose without loss of generality that $z_{k}$ is given by $(r,0)$. We are only trying to determine distances, and these are invariant by rotation. As illustrated in Figure~\ref{fig:failstep1}, if $z_{k+2}=z_{k}$, this means that the intersections between the circle $C_z$ of radius 2 centered in $z_k$ and the line $D_1$ defined by the step with $\Vdc_2(k+1)$ are outside the circle $C_o$ of radius $r$ centered in the origin. We also know that the direction $D_2$ of the next two steps is orthogonal to $D_1$. We will now determine the worst norm possible for $z_{k+4}$.

\begin{figure}[h!]
    \centering
    \begin{tikzpicture}

    \draw[blue] (0,0) circle (2.0);
    \filldraw[blue] (0,0) circle (1.pt);
    \node[align=left][blue] at (0.0,-0.3) {$O$};
    \node[align=left][blue] at (-1.8,0.0) {$C_o$};
    \draw[red] (2,0) circle (1.0);
    \filldraw[red] (2,0) circle (1.pt);
    \node[align=left][red] at (2.3,-0.3) {$z_{k}$};
    \node[align=left][red] at (3.2,-0.3) {$C_z$};
    \draw[black] (1.5,1.9364) -- (2,0) -- (2.5,-1.9364);
    \draw[black] (1.5,-1.9364) -- (2,0) -- (2.5,1.9364);
    \node[align=left][black] at (2.0,-1.5) {$C_2$};
    \draw[black] (1.031,0.2588) arc (165:195:1.0);
    \node[align=left][black] at (1.4,0.0) {$CA$};
    \draw[black] (2.0,2.0) -- (2.0,0.0);
    \draw[black] (2.0,2.0) arc (90:105:2);
    \node[align=left][black] at (1.8,1.8) {$\alpha$};
    \end{tikzpicture}
    \caption{Given a failed step in the double cone $C_2$, the next step has to be in the arc $CA$.}
    \label{fig:failstep1}
\end{figure}
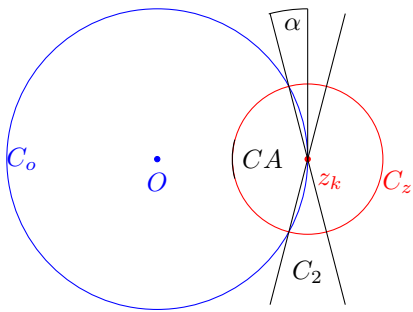
     Let $\alpha$ be the angle between the vertical line $y=r$ and the line passing through $z_{k}$ and the upper intersection of $C_o$ and $C_z$. The directions that would lead to $z_{k}=z_{k+2}$ are in the ``double'' cone $C_2$ of angle $2\alpha$ with summit $z_{k}$. The possible next steps are formed by the circle arc $CA$ defined by the intersection between $C_z$ and the lines orthogonal to those contained in $C_2$. We first determine $\alpha$. Let $(x,y)$ be the upper intersection of $C_o$ and $C_z$. We have $x^2+y^2=r^2$ and $(x-r)^2+y^2=4$. Solving this gives us $x=r-2/r$ and $y=2\sqrt{1-1/r^2}$. We then obtain $\sin(\alpha)=1/r$. The point $p$ on $CA$ furthest from the origin $O$ is such that $\widehat{Oz_kp}=\alpha$. Its $y$-coordinate $p_y$ is such that $\sin(\alpha)=p_y/2=1/r$. As such, $p_y=2/r$. Its $x$-coordinate is $r-2\sqrt{1-1/r^2}$. We can now determine the distance $d$ to the origin,
     $$d=\sqrt{r^2+4-4\sqrt{r^2-1}}.$$
     Among all the points on the arc $CA$, this is the furthest away from the origin, as $CA$ is symmetric relative to the $x$-axis. Finally, we obtain that $|z_{k}|-|z_{k+4}|>r-\sqrt{r^2+4-4\sqrt{r^2-1}}$, which is the first part of the desired result.
     We now suppose that $\sqrt{2}<|z_k|<2$. Since $z_k=z_{k+2}$, the origin has to be in a band of width 1 orthogonal to the direction $D_1$ imposed by $\Vdc_2(k+1)$. Let $D_2$ be the direction of this band. Since $k \equiv -1$ modulo 4, the next two steps of the process are along this direction $D_2$. If $z_{k+4}=z_{k}$, then the origin must be strictly inside a square of side 1 with one corner equal to $z_{k}$. This would mean $|z_{k}|<\sqrt{2}$, a contradiction. As such, $z_{k+4} \neq z_k$. In this case, since $|z_{k}|<2$, the origin has to be in a rectangle $R$ of side $[z_k, z_{k+4}]$ of length 2 and whose adjacent side is of length 1. Since $|z_{k}|>\sqrt{2}$, we also know that it will be strictly inside the square $S_2$ of size 1 that forms half of $R$ and has $z_{k+4}$ as a summit. We conclude $|z_{k+4}|<\sqrt{2}$.
\end{proof}

\begin{lemma}
     Let $k \equiv -1$ modulo 8. If $z_k$ exists and $z_{k}$, $z_{k+2}$ and $z_{k+4}$ are all distinct, then $|z_k|-|z_{k+8}|\geq 2(\sqrt{2}-1)$ if $|z_{k}|>2$, and $|z_{k+8}|<1+(\sqrt{2}-1)^2$ otherwise.
\end{lemma}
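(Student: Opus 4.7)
My plan exploits the structure of the base-$2$ van der Corput sequence at positions $k \equiv -1 \pmod 8$. Reading off the last three binary digits of $n = 8m + j$, the eight values $\Vdc_2(k+1),\ldots,\Vdc_2(k+8)$ split into four pairs $(k+1,k+2)$, $(k+3,k+4)$, $(k+5,k+6)$, $(k+7,k+8)$ with intra-pair difference $1/2$, so the eight steps split into four pair-moves along four lines $L_1,L_2,L_3,L_4$ with $L_1 \perp L_2$, $L_3 \perp L_4$, and $L_3$ at angle $\pi/4$ to $L_1$. After a rotation I may assume $L_1,L_2$ are the coordinate axes and $L_3,L_4$ are the lines $y = \pm x$. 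The rotation by $\pi$ preserves each $L_i$, so I may further assume the first coordinate of $z_k$ is positive. Write $z_k = (a,b)$ and $r = |z_k|$.

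From the distinctness hypothesis, $z_k \neq z_{k+2}$ forces $a > 1$ and gives $z_{k+2} = (a-2, b)$; then $z_{k+2} \neq z_{k+4}$ forces $|b| > 1$ and gives $z_{k+4} = (a-2, \, b - 2\,\mathrm{sign}(b))$. A short computation yields $|z_{k+4}|^2 = r^2 - 4(a + |b|) + 8$, and on the constraint $a^2 + b^2 = r^2$ with $a, |b| > 1$ one has $a + |b| > 1 + \sqrt{r^2 - 1}$, so $|z_{k+4}| < \sqrt{r^2 + 4 - 4\sqrt{r^2-1}}$. Setting $g(r) := r - \sqrt{r^2 + 4 - 4\sqrt{r^2-1}}$, we obtain $|z_k| - |z_{k+4}| > g(r)$.

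For $|z_k| > 2$, I would show $g$ is strictly increasing on $(\sqrt{2}, \infty)$ by writing $u = \sqrt{r^2-1}$ and reducing $g'(r) > 0$ (after the standard squaring manipulation) to $u > 1$, and check $g(2) = 2 + \sqrt{2} - \sqrt{6} > 2(\sqrt{2}-1)$ by squaring $4 - \sqrt{2} > \sqrt{6}$ to $12 > 8\sqrt{2}$. Proposition~\ref{prop:stdec} applied to the pair-steps $(k+5,k+6)$ and $(k+7,k+8)$ yields $|z_{k+8}| \leq |z_{k+6}| \leq |z_{k+4}|$, so $|z_k| - |z_{k+8}| > g(r) > 2(\sqrt{2}-1)$, as desired.

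For $\sqrt{2} < |z_k| \leq 2$, both coordinates of $z_{k+4}$ have modulus less than $1$. I would split on the sign of $b$: if $b > 0$ then only the $L_3$-pair can move, since its $L_4$-projection $(b-a)/\sqrt{2}$ has modulus less than $(\sqrt{3}-1)/\sqrt{2} < 1$; if $b < 0$ then only the $L_4$-pair can move, by a symmetric argument on $(a+b)/\sqrt{2}$. In either subcase $|z_{k+8}|^2$ reduces to an explicit function of the single parameter $\sigma := a + |b|$, equal to $r^2 - 2c\sigma + 2c^2$ with $c := 2 - \sqrt{2}$ when the active pair moves (which happens iff $\sigma < 4 - \sqrt{2}$) and to $r^2 + 8 - 4\sigma$ otherwise; the two expressions agree at $\sigma = 4 - \sqrt{2}$ and together form a continuous, decreasing, piecewise-linear envelope in $\sigma$. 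Maximizing at the smallest admissible $\sigma = 1 + \sqrt{r^2-1}$ and then over $r \in (\sqrt{2}, 2]$ by a short one-variable calculation gives the joint maximum $4 - 2\sqrt{2}$, attained at $r = \sqrt{12 - 6\sqrt{2}}$, so $|z_{k+8}| < \sqrt{4 - 2\sqrt{2}} < 4 - 2\sqrt{2} = 1 + (\sqrt{2}-1)^2$. The main obstacle is the bookkeeping in this second case: enumerating the subcases, observing that they all produce the same piecewise-linear envelope, and locating the maximum precisely at the kink $\sigma = 4 - \sqrt{2}$ and $r = \sqrt{12 - 6\sqrt{2}}$.
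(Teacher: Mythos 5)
Your proof is correct, but it takes a genuinely different route from the paper's, so a comparison is worthwhile.

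The paper works geometrically: it observes that $z_k, z_{k+2}, z_{k+4}$ form a right isosceles triangle with legs of length $2$, constrains the origin to a cone $C$ with apex at the midpoint of $[z_k,z_{k+4}]$, splits on whether $z_k, z_{k+4}, z_{k+5}, z_{k+6}$ are collinear, and within the collinear case distinguishes three sub-cases by the position of $z_{k+6}$ on that line, intersecting $C$ with bands and bisector half-planes to extract the bounds. The non-collinear case is handled by a one-sentence remark that it is "a better situation" with calculation details omitted.

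You instead fix coordinates so that $L_1, L_2$ are the axes and $L_3, L_4$ are $y=\pm x$, write $z_k=(a,b)$ with $a>0$, and let the distinctness hypothesis force $a,|b|>1$ and $z_{k+4}=(a-2,|b|-2)$ (up to the sign of $b$). The whole lemma then collapses into elementary one-variable calculus in the single parameter $\sigma = a+|b|$ (constrained by $\sigma > 1+\sqrt{r^2-1}$), with the aligned/non-aligned split of the paper absorbed painlessly into the sign of $b$, which is handled by a reflection symmetry swapping $L_3 \leftrightarrow L_4$. Two features of your route are genuinely nicer: (i) for $|z_k|>2$ you do not need the third pair of steps at all — the bound $|z_k|-|z_{k+4}| > g(r) > g(2) = 2+\sqrt{2}-\sqrt{6} > 2(\sqrt{2}-1)$ together with Proposition~\ref{prop:stdec} already closes the case, whereas the paper also tracks $z_{k+6}$; and (ii) you give a fully explicit, symmetric treatment of both orientations, replacing the paper's hand-waved non-aligned case. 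As a small bonus you obtain the slightly sharper estimate $|z_{k+8}| < \sqrt{4-2\sqrt{2}}$ in the second regime (versus the stated $4-2\sqrt{2}$). Minor remark: your statement that $g'(r)>0$ reduces to $u>1$ is exactly right after the squaring — the sign of $g'$ matches the sign of $4(u-1)(r^2-u^2)=4(u-1)$ since $r^2-u^2=1$ — and $u>1$ holds throughout the regime $r>\sqrt{2}$, so the claim is safe.
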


\begin{proof}
     Let $k \equiv -1$ modulo 8, such that $z_{k}$, $z_{k+2}$ and $z_{k+4}$ are all distinct. By contrapositive of Lemma~\ref{lemma:doublestep}, we have  $|z_k|>\sqrt{2}$. The reasoning used in the proof of Lemma~\ref{lemma:doublestep} relative to the positioning of $z_{k}$, $z_{k+2}$ and $z_{k+4}$ still holds: they form a triangle with right-angle for $\widehat{z_{k+4}z_{k+2}z_{k}}$. Furthermore, the region where the origin can be is a cone $C$ with summit in the middle of $[z_{k},z_{k+4}]$ and with sides parallel to, and with same directions as, $(z_{k},z_{k+2})$ and $(z_{k+2},z_{k+4})$. An illustration is provided in Figure~\ref{fig:allsteps}.
 We now use the van der Corput sequence's definition: if $k \equiv -1$ modulo 8, $\Vdc_2(k+5)=\Vdc_2(k)+1/8$. There are now two possibilities, depending on whether or not $z_{k}$, $z_{k+4}$, $z_{k+5}$ and $z_{k+6}$ are aligned. We will treat the aligned case in detail, the other works in a very similar way. Let $D$ be that line. By Proposition~\ref{prop:stdec}, $|z_{k+6}|\leq|z_{k+4}|$, with equality iff the two points are equal.\\
     We distinguish three cases. If $z_{k+6}=z_{k+4}$, then the origin is in an open band of width 1 orthogonal to $(z_{k+4},z_{k})$ where one of the two side-edges passes through $z_{k+4}$. We also had previously that the origin is in the cone $C$. The point in this convex set that minimizes $|z_{k}|-|z_{k+6}|$ is on $[z_{k},z_{k+6}]$, and by construction is at distance 1 of $z_{k+6}$. We also know that $|z_{k+6}-z_{k}|=\sqrt{8}$. For this point we have $|z_{k}|-|z_{k+6}|=2(\sqrt{2}-1)$.

    \begin{figure}[h!]
    \centering
    \begin{tikzpicture}[scale=1.1]
    \draw[black] (0,0) --(1,0) -- (2,0) --(2,1) --(2,2);
    \draw[blue] (0,0) circle (2.pt);
    \draw[blue] (1,0) circle (2.pt);
    \draw[blue] (2,0) circle (2.pt);
    \draw[blue] (2,1) circle (2.pt);
    \draw[blue] (2,2) circle (2.pt);
    \draw[blue,dashed] (0,1.5)--(0,-0.5);
    \draw[blue,->] (0,1)--(0.3,1);
    \draw[blue,dashed] (1,1.5)--(1,-0.5);
    \draw[blue,->] (1,1)--(1.3,1);
    \draw[blue,dashed] (1.5,0)--(3.5,0);
    \draw[blue,->] (2.3,0)--(2.3,0.3);
    \draw[blue,dashed] (0.5,1)--(3.5,1);
    \draw[blue,->] (2.3,1)--(2.3,1.3);
    \draw[black,->] (0,0)-- (3.6,3.6);
    \node[align=left][black] at (3.6,3.2) {$D$};
    \node[align=left][black] at (0.2,-0.2) {$z_k$};
    \node[align=left][black] at (1.5,-0.2) {$z_{k+1}$};
    \node[align=left][black] at (2.5,-0.2) {$z_{k+2}$};
    \node[align=left][black] at (2.5,0.8) {$z_{k+3}$};
    \node[align=left][black] at (2.5,1.8) {$z_{k+4}$};
    \draw[green] (0.586,0.586) circle (2.pt);
    \draw[green] (1.293,1.293) circle (2.pt);
    \draw[red] (3.414,3.414) circle (2.pt);
    \draw[red] (2.707,2.707) circle (2.pt);
    \draw[draw=yellow,fill=yellow,opacity=0.2]
        (1.0, 1.0)--(3.6, 1.0)--(3.6 ,3.6)--(1.0, 3.6)--cycle;
    \draw[black,dashed] (1.0,1.0) --(3.6,1.0);
    \draw[black, dashed] (1.0,1.0) --(1.0,3.6);
    \draw[blue,dashed] (0.793,1.793)--(1.793,0.793);
    \end{tikzpicture}
    \caption{The possible positions of $z_{k+5}$ and $z_{k+6}$ are in red and green, and we could have $z_{k+6}=z_{k+4}$. In yellow, the cone $C$ where the origin could be based on the first 4 steps. The non-dashed edges indicate the sides where the region is infinite.}
    \label{fig:allsteps}
\end{figure}
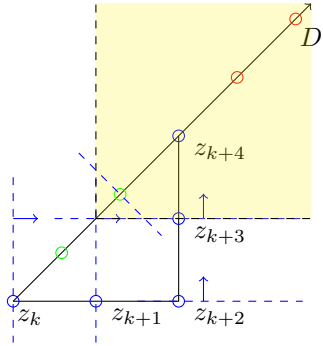
     
     In the second case, $z_{k+6}$ is on $(z_{k+4},z_{k}]$ but not between $z_{k}$ and $z_{k+4}$. The acceptable region for the origin is the intersection of $C$ and the halfplane defined by the bisector of $[z_{k+6},z_{k+4}]$. Since $|z_{k+4}-z_{k}|=\sqrt{8}$ and $z_{k}$ is to the right of $z_{k+4}$ on $D$, we have that $|z_{k}|-|z_{k+6}|\geq 1$. In the final case, $z_{k+6}$ is on $D$ and between $z_{k}$ and $z_{k+4}$. The possible positions for the origin are all in the triangle formed by the top of the cone $C$, cut off at the halfplane orthogonal to $D$ and at distance 1 to $z_{k+4}$. We necessarily have $|z_k|<2$. We now need to find the smallest $|z_{k}|-|z_{k+6}|$ possible. If the origin is on one of the outer edges of the cone,  
     $$|z_{k}|=\sqrt{2+2\sqrt{2}a+2a^2}$$ and 
     $$|z_{k+6}|=\sqrt{6-4\sqrt{2}+2(2-\sqrt{2})a+2a^2},$$ for $a \in [0,\sqrt{2}-1)$. The minimal $|z_{k}|-|z_{k+6}|$ is obtained for $a=\sqrt{2}-1$ as the derivative is always negative on $[0,2-\sqrt{2}]$. This corresponds to $|z_{k+6}|<1+(\sqrt{2}-1)^2<\sqrt{2}$. Since this third case can only happen if $|z_k|<2$, we get the desired result in this case too.
     Grouping all three cases together, we obtain that $|z_{k}|-|z_{k+6}|>2(\sqrt{2}-1)$ if $|z_k|\geq 2$ when $k\equiv -1$ modulo 8, and $|z_{k+6}|<1+(\sqrt{2}-1)^2$ if $|z_k|<2$. Proposition~\ref{prop:stdec} gives the desired result in the aligned case.
     When they are not aligned, if $z_{k+4}=z_{k+6}$, then $z_{k},z_{k+4}, z_{k+7}$ and $z_{k+8}$ are aligned and we use the reasoning above. If $z_{k+4} \neq z_{k+6}$, the origin has to be in cones pointing away from the origin that are much closer to the line containing $z_{k+4}$ and $z_{k+6}$. We omit the calculation details here, they represent a better situation than that proved in the next lemma.
\end{proof}

\begin{lemma}\label{lemma:yesno}
    Let $k \equiv -1$ modulo 8. If $z_k$ exists, $z_{k+2} \neq z_k$, and $z_{k+4}=z_{k+2}$, then:
    \begin{itemize}
        \item[1.] If $|z_k|<2$ then $|z_{k+4}|<\sqrt{2}$.
        \item[2.] If $2\leq |z_k|\leq \sqrt{5}$ then $|z_{k+4}|< \sqrt{8(1-\sqrt{3}/2)}$.
        \item[3.] If $|z_k|>\sqrt{5}$ then $|z_k|>|z_{k+4}|+\sqrt{5}-1$.
    \end{itemize}
\end{lemma}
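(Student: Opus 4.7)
The plan is to exploit rotational invariance to set up coordinates aligned with the two van der Corput directions, use the hypotheses $z_{k+2} \neq z_k$ and $z_{k+4} = z_{k+2}$ to pin down the geometry of the four steps, and then reduce each of the three regimes to a short manipulation of a single master inequality.

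First, following the pattern of Lemmas~\ref{lemma:doublestep} and~\ref{lemma:failstep}, I would rotate so that the line $D_1$ spanned by $v_{k+1},v_{k+2}$ is the $x$-axis and the orthogonal line $D_2$ spanned by $v_{k+3},v_{k+4}$ is the $y$-axis, and write $z_k=(a,b)$. The minimization rule forces $z_{k+1}=(a-\mathrm{sgn}(a),b)$, and then the assumption $z_{k+2}\neq z_k$ means the second step continues in the same direction, which (by comparing $a^2$ with $(a-2\,\mathrm{sgn}(a))^2$) holds exactly when $|a|>1$, in which case $z_{k+2}=(a-2\,\mathrm{sgn}(a),b)$. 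Analogously, $z_{k+4}=z_{k+2}$ means the two steps along $D_2$ cancel, and a parallel one-line calculation shows this happens exactly when $0<|b|<1$, the boundary values $|b|\in\{0,1\}$ producing indeterminacy and therefore being excluded by the hypothesis that $z_k$ exists. After a reflection we may take $a>1$, so that
\[
|z_{k+2}|^2 = (a-2)^2 + b^2 = |z_k|^2 - 4(a-1),
\]
and since $a=\sqrt{|z_k|^2-b^2}>\sqrt{|z_k|^2-1}$ we obtain the master bound
\[
|z_{k+2}|^2 \;<\; |z_k|^2 - 4\sqrt{|z_k|^2-1} + 4.
\]

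Each case then falls out of this bound. Case~1 is immediate: $|z_k|<2$ forces $a\in(1,2)$, hence $(a-2)^2<1$, and combined with $b^2<1$ this gives $|z_{k+4}|^2=|z_{k+2}|^2<2$. For case~2, writing $f(r)=r^2-4\sqrt{r^2-1}+4$, the derivative $f'(r)=2r\bigl(1-2/\sqrt{r^2-1}\bigr)$ is non-positive on $[2,\sqrt{5}]$, so the maximum of $f$ on that interval is $f(2)=8-4\sqrt{3}=8(1-\sqrt{3}/2)$, yielding the stated bound. For case~3, I would prove $|z_k|-|z_{k+2}|>\sqrt{5}-1$ by taking the target inequality $|z_{k+2}|^2<(|z_k|-\sqrt{5}+1)^2$, substituting the master upper bound, and rearranging; after cancellation the required inequality becomes $(\sqrt{5}-1)(|z_k|+1)\le 2\sqrt{|z_k|^2-1}$, which squares to the equivalent condition $|z_k|\ge\sqrt{5}$, so the strict hypothesis $|z_k|>\sqrt{5}$ produces the strict conclusion.

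The main obstacle is bookkeeping rather than genuine mathematics: one has to carefully check that each boundary configuration ($a=0$, $|a|=1$, $b=0$, $|b|=1$) corresponds to an indeterminate step and is therefore ruled out by the assumption that $z_k$ exists, and one has to make sure the squaring step in case~3 is justified (both sides are positive once $|z_k|\ge 1$) and preserves strict inequalities. Once the coordinate setup and the master bound are in place, the three cases are three elementary calculus exercises.
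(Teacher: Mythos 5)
Your proof is correct and follows essentially the same strategy as the paper's: identify the constraints that $z_{k+2} \neq z_k$ and $z_{k+4} = z_{k+2}$ impose on the geometry, derive the extremal bound on $|z_{k+2}| = |z_{k+4}|$, and analyse each regime. What you do differently is to replace the paper's geometric picture (width-one bands, a unit square with corner $z_{k+2}$, circle intersections) with explicit coordinates $z_k=(a,b)$ aligned with the two orthogonal step directions, extract the constraints $|a|>1$ and $0<|b|<1$, and package everything as the single master inequality $|z_{k+2}|^2 < |z_k|^2 - 4\sqrt{|z_k|^2-1}+4$. That buys a visibly tidier case analysis: Case 2 becomes a one-line monotonicity check for $f(r)=r^2-4\sqrt{r^2-1}+4$ on $[2,\sqrt5]$, and Case 3, after squaring the target inequality and cancelling, collapses to the condition $|z_k|\geq\sqrt5$, which is more transparent than the paper's appeal to monotonicity of $|z_k|-|z_{k+4}|$ as a function of $|z_k|$. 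Your algebraic derivation also implicitly corrects a sign typo in the published proof, which gives the extremal value as $\sqrt{4+|z|^2-4\sqrt{1+|z|^2}}$; the radicand should be $4+|z|^2-4\sqrt{|z|^2-1}$, as your master bound shows (the paper's expression would be negative for $|z_k|$ near $\sqrt5$, whereas the corrected one gives the value $1$ the paper itself quotes at $|z_k|=\sqrt5$). The bookkeeping concern you flag — that $|a|=1$ and $|b|\in\{0,1\}$ yield an indeterminate step and are therefore excluded — is exactly right and is the same device the paper uses throughout Section 3.
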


\begin{proof}
Let $k\equiv- 1$ modulo 8. We suppose that we have $z_{k+2} \neq z_k$ and $z_{k+2}=z_{k+4}$. Since $k\equiv 1$ modulo 8, the third and fourth steps are on a line orthogonal to the one the first two steps are on, and they intersect in $z_{k+2}$. Since we are back in $z_{k+2}$ after the fourth step, this implies that the origin is in a band $B$ of width 1, and one of its sides is defined by $(z_{k},z_{k+2})$. 
Firstly, if $|z_k|<2$, and $z_{k+2}\neq {z_k}$, then $|z_{k}|>1$. We also have that the origin is in a square of side 1, $S_1$, and one of the corners of the square is $z_{k+2}$, see Figure~\ref{fig:yesno} (whether we are to the left or right of $(z_{k},z_{k+2})$ corresponds exactly to the side $z_{k+3}$ is on). As such $|z_{k+4}|<\sqrt{2}$.\\
If $2<|z_{k}|<\sqrt{5}$, then the origin has to be within $(B(z_k,\sqrt{5})\backslash B(z_k,2)) \cap B$. The point furthest away from $z_{k+2}$ in that case is on the outer edge of $S_1$. We clearly have $|z_{k+4}|<\sqrt{2}$, but this can be further improved by considering this point when $|z_k|=2$. In this case, we have 
$$|z_{k+4}|=\sqrt{8(1-\sqrt{3}/2)}.$$
The intersection point between the circle $C(z_k,|z_k|)$ and $B$ will only be closer to $z_{k+4}$ for higher norms of $z_{k}$.

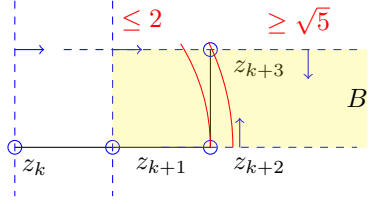
\begin{figure}[h!]
    \centering
    \begin{tikzpicture}[scale=1.3]
        \draw[black] (0,0) --(1,0) -- (2,0)--(2,1);
        \node[align=left][black] at (0.2,-0.2) {$z_k$};
    \node[align=left][black] at (1.5,-0.2) {$z_{k+1}$};
    \node[align=left][black] at (2.5,-0.2) {$z_{k+2}$};
    \node[align=left][black] at (2.5,0.8) {$z_{k+3}$};
    \draw[blue] (0,0) circle (2.pt);
    \draw[blue] (1,0) circle (2.pt);
    \draw[blue] (2,0) circle (2.pt);
    \draw[blue] (2,1) circle (2.pt);
    \draw[blue,dashed] (0,1.5)--(0,-0.5);
    \draw[blue,->] (0,1)--(0.3,1);
    \draw[blue,dashed] (1,1.5)--(1,-0.5);
    \draw[blue,->] (1,1)--(1.3,1);
    \draw[blue,dashed] (1.5,0)--(3.5,0);
    \draw[blue,->] (2.3,0)--(2.3,0.3);
    \draw[blue,dashed] (0.5,1)--(3.5,1);
    \draw[blue,->] (3.0,1)--(3.0,0.7);
    \draw[draw=yellow,fill=yellow,opacity=0.2]
        (1.0, 1.0)--(1.0, 0.0)--(3.6 ,0.0)--(3.6, 1.0)--cycle;
        \node[align=left][black] at (3.5,0.5) {$B$};
    \draw[red] (2.0,0.0) arc (0:32:2.0);
    \node[align=left][red] at (1.3,1.3) {$\leq 2$};
    \node[align=left][red] at (2.9,1.3) {$\geq \sqrt{5}$};
    \draw[red] (2.23,0.0) arc (0:26:2.45);
    \end{tikzpicture}
    \caption{Without loss of generality, one of two acceptable regions for the origin after 4 steps. The other is its symmetric relative to $(z_k,z_{k+2})$. The origin has to be in $B$, and depending on $|z_{k}|$, it is in one of the three regions defined by the red arcs.}
    \label{fig:yesno}
\end{figure}

Finally, if $|z_{k}|>\sqrt{5}$, the origin will still be in $B$ but all we can guarantee is that, when splitting the band in 2 with the orthogonal line to $B$ passing through $z_{k+4}$, it is not in the same part as $z_{k}$. For a given distance $|z_k|$, the furthest point from $z_{k+4}$ is once again the intersection of the circle $C(z_k,|z_{k}|)$ and one of the outer boundary of $B$.
In this case, we obtain 
$$|z_{k+4}|=\sqrt{4+|z|^2-4\sqrt{1+|z|^2}}.$$
The value $|z_k|-|z_{k+4}|$ is strictly increasing in $|z_k|$, and is minimized for $|z_k|=\sqrt{5}$. This corresponds exactly to $|z_{k+4}|=1$ (and $z_{k+3}$ is equal to the origin, as such we require $|z_{k}|>\sqrt{5}$). This gives us the desired result, and concludes the proof.
\end{proof}

This concludes the proof of the results we require to prove Theorem~\ref{th:general}.

\subsection{Proof of Theorem~\ref{th:general}}

\begin{proof}[Proof of Theorem~\ref{th:general}]
    Let $z_{-1} \in \mathbb{R}$. Firstly, we set aside the indeterminate case over which we have no control. Suppose that $(z_n)$ is defined at least for $8|z_{-1}|/(2(\sqrt{2}-1))$ steps. We will show that for any $k \equiv -1$ modulo 8, $|z_{k}|>|z_{k+8}|+2(\sqrt{2}-1)$. For such a $k$, we first consider the case $|z_k|>2$. There are three possibilities:
    \begin{itemize}
        \item If $z_k=z_{k+2}$, then by Lemma~\ref{lemma:failstep}, $|z_{k}|-|z_{k+4}|>r-\sqrt{r^2+4-4\sqrt{r^2-1}}>2(\sqrt{2}-1)$ since $r>2$.
        \item If $z_k \neq z_{k+2}$ and $z_{k+2}=z_{k+4}$ then, by Lemma~\ref{lemma:yesno}, if $|z_{k}|<\sqrt{5}$ then $|z_{k+6}|<\sqrt{2}$ and $|z_{k+6}|<|z_k|-(\sqrt{5}-1)$ otherwise.
        \item If $z_k$, $z_{k+2}$ and $z_{k+4}$ are all distinct then Lemma~\ref{lemma:doublestep} directly states $|z_{k}|>|z_{k+8}|+2(\sqrt{2}-1)$.
    \end{itemize}

    Using Proposition~\ref{prop:stdec}, we have that $|z_{k+4}|$ and $|z_{k+6}|$ are greater than or equal to $|z_{k+8}|$. As such, while $|z_k|>2$, we have the guarantee that $|z_{k}|>|z_{k+8}|+2(\sqrt{2}-1)$. It therefore takes at most $8(|z_{-1}|-2)$ steps to reach the ball of radius 2. With the special cases for $|z_{k}|<2$ from Lemmas~\ref{lemma:doublestep}, \ref{lemma:failstep} and~\ref{lemma:yesno}, it takes exactly one more block of 8 steps to reach $B(0,\sqrt{2})$, proving the desired result.
    
\end{proof}

\subsection{Periodicity}
\begin{proof}[Proof of Theorem~\ref{th:vdc2}]
    We want to show that, for $n$ odd and $z_n \in B(0,1)$, $z_{n+2j}=z_{n}$ and $z_{n+2j+1}$ is on a half-circle centered in $z_{n}$ of radius 1. This half circle is exactly that contained in the halfplane defined by the line orthogonal to $((0,0),z_{n})$ and that contains the origin. We will show this result holds once $z_n$ for $n$ odd is in $B(0,1)$ and this will directly prove the first statement on $B(0,1)\backslash \{(0,0)\}$.
    We first prove that the result is true for the first two steps, i.e. $j=0$ for the first even and odd point after $z_{n}$.

\textit{Even case.} We have that $z_{n+1}=z_n \pm e^{2\pi i \Vdc_2(n+1)}$. By definition, $z_{n+1}$ is on the circle $C_n$ of radius 1 centered in $z_n$, and is on the line $D_n$ defined by $z_n+xe^{2\pi i \Vdc_2(n+1)}$. There are exactly two such points since $D_n$ contains the center of $C_n$. In the first case, if $D_n$ is orthogonal to $((0,0),z_n)$, both points have same norm. The sequence is indeterminate and stops. If not, one of the two elements has norm strictly smaller than the other and is selected. Since any $D_n$ such that the sequence is properly defined cannot be equal to $((0,0),z_n)^{\perp}$,  the element with smaller norm is in the desired halfplane.

\textit{Odd case.} By definition, $z_{n+2}=z_{n+1} \pm e^{2\pi i \Vdc_2(n+2)}$. Since $n$ is even, $\Vdc_2(n+2)=0.5+\Vdc_2(n+1)$. This gives us $z_{n+2}=z_{n+1} \pm e^{2\pi i (\Vdc_2(n+1)+0.5)}=z_{n+1}\pm e^{2\pi i \Vdc_2(n+1)}$. $z_{n+2}$ is therefore on a circle of radius 1 centered on $z_{n+1}$ and on $D_n$ since $z_{n+1}$ is on it. There are two such points separated by a distance of 2, one of which is $z_n$. Since only one of the two can have norm smaller than 1, it will necessarily be selected as $z_{n+2}$. By hypothesis, $|z_n|<1$, therefore $z_{n+2}=z_n$.

By induction, this can be generalized to any $j \in \mathbb{N}$.
\end{proof}

This theorem proves two different elements: firstly any $z_{-1} \in B(0,1)$ is a periodic start position in base 2. Secondly, once we have an odd-numbered $z_{k}\in B(0,1)$ then it will act as a periodic start position for the rest of the sequence. Gaps in the semi-circle then correspond exactly to previously taken directions since no two points of the van der Corput sequence are equal.

\subsection{The uncertain regime: 1 to $\sqrt{2}$}
Theorem~\ref{th:vdc2} shows that once an odd point is in $B(0,1)$, we have a perfect understanding of the future points. Theorem~\ref{th:general} also allows us to understand the behavior of the sequence as long as $|z_n|>\sqrt{2}$. However, apart from the limited information given by Lemma~\ref{lemma:doublestep}, none of the results presented so far give information on what can happen between $1$ and $\sqrt{2}$. Studying bigger blocks of consecutive steps could improve the different lemmas shown previously. However, the following Proposition shows that it is impossible to guarantee that we will reach $B(0,1)$ for any finite number of steps $n$. 

\begin{proposition}\label{prop:nomove}
    For any $n\in \mathbb{N}$, there exist convex sets of starting positions in $B(0,\alpha_n)\backslash B(0,1)$ such that $z_{-1}=z_{2n-1}$, where $\alpha_n>1$ is some constant depending on $n$ such that $\lim_{n\rightarrow \infty} \alpha_n=1$.
\end{proposition}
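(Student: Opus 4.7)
The plan is to reduce $z_{-1}=z_{2n-1}$ to a system of linear inequalities on $z_{-1}$, exhibit convex neighborhoods inside the resulting region, and bound the outer radius $\alpha_n$ via equidistribution of the van der Corput sequence.

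I would first derive a \emph{return criterion}. Since $\Vdc_2(2k+1)-\Vdc_2(2k)=1/2$, the two unit vectors used between $z_{2k-1}$ and $z_{2k+1}$ are $\pm v_k$ with $v_k:=e^{2\pi i\Vdc_2(2k)}$, so (as in the proof of Proposition~\ref{prop:stdec}) the only two candidates for $z_{2k+1}$ are $z_{2k-1}$ itself and its reflection $z_{2k-1}+2\epsilon_k v_k$ across $z_{2k}$, where $\epsilon_k=-\mathrm{sgn}\langle z_{2k-1},v_k\rangle$. A direct computation of squared norms shows that the strict choice $z_{2k+1}=z_{2k-1}$ is made precisely when $|\langle z_{2k-1},v_k\rangle|<1$. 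Inducting on $k$, if $z_{-1}$ satisfies $|\langle z_{-1},v_k\rangle|<1$ for every $k=0,1,\dots,n-1$, then $z_{-1}=z_1=\cdots=z_{2n-1}$.

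Next, I would analyze the resulting open set
$$S_n:=\bigcap_{k=0}^{n-1}\bigl\{z\in\mathbb{R}^2:|\langle z,v_k\rangle|<1\bigr\},$$
which is a centrally symmetric convex polygon circumscribing the unit disk: one has $B(0,1)\subset S_n$ by Cauchy--Schwarz, and each of its (at most) $2n$ edges lies on a line tangent to the unit circle at some $\pm v_k$. Each vertex of $S_n$ is the intersection of two adjacent tangent lines, associated with consecutive angles $\phi_j,\phi_{j+1}\in[0,\pi)$, where $\phi_k:=2\pi\Vdc_2(2k)$; elementary trigonometry gives its distance from the origin as $1/\cos(\theta/2)>1$ with $\theta:=\phi_{j+1}-\phi_j$. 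A sufficiently small open ball around any such vertex is then a convex subset of $S_n\setminus B(0,1)$, yielding the desired convex starting regions.

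Finally, I would set $\alpha_n$ to be slightly above the largest vertex norm $1/\cos(\Delta_n^\ast/2)$, where $\Delta_n^\ast$ is the maximal gap among the sorted angles $\{\phi_k\bmod\pi:0\le k<n\}\subset[0,\pi)$. The identity $\Vdc_2(2k)=\Vdc_2(k)/2$ shows these angles are, up to scaling, the first $n$ terms of the base-$2$ van der Corput sequence, and the classical discrepancy bound $D_n=O(\log n/n)$ yields $\Delta_n^\ast=O(\log n/n)$; hence $\alpha_n=1+O((\log n/n)^2)\to 1$ as $n\to\infty$. The main point to watch is keeping every return inequality \emph{strict} so the sequence never becomes indeterminate, but this holds automatically on the open set $S_n$.
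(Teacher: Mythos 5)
Your proof is essentially correct and follows the same conceptual route as the paper. Both reduce the claim to the non-moving criterion coming from $\Vdc_2(2k+1)-\Vdc_2(2k)=1/2$ (your $|\langle z_{-1},v_k\rangle|<1$), intersect the resulting conditions over $k<n$, and bound the outer radius via equidistribution of the directions $v_k$. What you do differently is to package the intersection as the circumscribed polygon $S_n=\bigcap_k\{|\langle z,v_k\rangle|<1\}$ and read off the vertex distance $\sec(\theta/2)$ directly, whereas the paper places $z_{-1}$ on a ray, computes the angular width of the ``bad'' arc $C(z_{-1},2)\cap B(0,|z_{-1}|)$ by the law of cosines, compares it with the minimal gap $2^{-\lfloor\log_2 n\rfloor-1}$ of the first van der Corput angles, and then rotates slightly. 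Your polygonal picture is arguably cleaner and, incidentally, gives the correct law-of-cosines constant; the identity $\alpha/2=\arccos((3+r^2)/(4r))$ in the paper's proof should read $\arccos(1/r)$ (the arc endpoints lie at distance $r$, not $1$, from the origin), though the conclusion $\alpha_n\to1$ is unaffected. Your gap bound $O(\log n/n)$ is slightly wasteful --- the dyadic structure of base-$2$ van der Corput gives a maximal gap of order $1/n$, which is what the paper exploits via $k=\lfloor\log_2 n\rfloor$ --- but this only affects the unclaimed rate, not the limit.

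Two small repairs are needed. First, a ball ``around'' a vertex $w$ of $S_n$ is not contained in $S_n$, since $w\in\partial S_n$; you should center the ball at an interior point such as $(1-\delta)w$ for small $\delta>0$, which still has norm $>1$. Second, and more substantively, the strict inequality $|\langle z,v_k\rangle|<1$ on the open set $S_n$ rules out indeterminacy only at the step from $z_{2k}$ to $z_{2k+1}$ (since $\langle z_{2k},v_k\rangle=\langle z_{-1},v_k\rangle\pm1\ne0$ there), but \emph{not} at the step from $z_{2k-1}$ to $z_{2k}$, which is undefined exactly when $\langle z_{-1},v_k\rangle=0$. Such points lie inside $S_n$, so your claim that indeterminacy ``holds automatically on the open set $S_n$'' is not quite right. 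You must additionally delete the finitely many lines $\{\langle z,v_k\rangle=0\}$ before choosing the convex neighborhood; since a suitably small ball near $(1-\delta)w$ avoids all of them (generic perturbation of the center suffices), the fix is routine, but it must be stated.
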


\begin{proof}
Let $n \in \mathbb{N}$ and let $k=\lfloor \log_2(n) \rfloor$. When considering the first $2^{k+1}$ elements of the sequence, the smallest difference between any two elements is $2^{-k-1}$. Given a point $z_{k} \notin B(0,\sqrt{2})$, we have $z_{k} \neq z_{k+2}$ if and only if $C(z_{k},2)$, the circle centered in $z_k$ of radius 2, has a non-empty intersection with $B(0,|z_{k}|)$. When $|z_k|=1$, this intersection is a single point. As $|z_{k}|$ increases, the circle arc forming $C(z_{k},2)\cap B(0,|z_k|)$ grows. We now need to find positions for which this circle arc corresponds to an angle of less than $2\pi2^{-k-1}$ in $z_{k}$. 
Without loss of generality, we suppose that $z_{k}$ is on the $x$-axis (starting there is obviously incorrect as it would improve in the first two steps). The edge of the circle arc, $z_{k}$ and the origin form a triangle with known distances. Using the law of cosines, we have that $\alpha/2=\arccos((3+r^2)/(4r))$ and we want $\alpha \leq 2\pi 2^{-k-1}$. In the case of equality, this gives us $$3+r^2-4\cos(\pi2^{-k-1})r=0.$$
This has two real roots for $k>1$, both of which are above 1. The smallest of the two is the desired solution. We have 
$$\alpha_n=\frac{4\cos(\pi2^{-k-1})-\sqrt{16\cos^2(\pi2^{-k-1})-12}}{2}$$

Any point on $[O,z_{k}]$ within that distance of the origin will be such that $z_{-1}=z_{2*i-1}$ for $i\leq n$. With a small rotation around the origin to guarantee that none of the first $n$ pairs of steps will improve on $|z_{-1}|$, this proves the desired result.
\end{proof}

The precise regions are larger than simply a line, and their number depends on $\log(n)$. We leave their computation to an interested reader. This result suffices to show that there cannot be a guaranteed improvement for the norm of $|z_k|$ within a fixed number of steps, regardless of that number. This concludes our analysis of the base 2 case. 

\section{Higher bases}
\subsection{General behavior: a discussion}
Before proving Theorem~\ref{th:polygon}, we give some quick elements to explain the behavior of the sequence in higher dimensions. There are two main differences with base 2.
The first is that we no longer have an equivalent to Proposition~\ref{prop:stdec}: we cannot guarantee that every $j$ steps we have a strictly decreasing norm, for some well-chosen value of $j$. With a similar case decomposition as in Lemmas~\ref{lemma:doublestep},~\ref{lemma:failstep} and~\ref{lemma:yesno}, one can show that in base 3 we can lower-bound $|z_{k}|-|z_{k+3}|$ by a constant for $k\equiv -1$ modulo 3 as long as $|z_{k}|>\sqrt{1.5}+c$ where $c$ is a small constant. The $\sqrt{1.5}$, improving on the $\sqrt{2}$ in base 2, corresponds to the maximum distance between the origin and $z_{k}$, while being equidistant to $z_{k+3}$.
This improvement is a side effect of the second difference: we have access to more information on consecutive steps, and we don't have consecutive colinear steps. 
To obtain $\sqrt{1.5}$ as the threshold where we can no longer guarantee improvement, it suffices to consider all the possible choices in 3 consecutive steps, whereas $2^2$ were necessary in the base 2 case to get $\sqrt{2}$ (and even $2^3$ to obtain precise values). As such, $z_{k}$ and $z_{k+3}$ cannot be too far away from each other, and in particular the origin has to be closer to $z_{k+3}$ if it is outside of a small ball centered on $z_k$. For the four different cases to consider here, the maximum distance $|z_k|$ such that $z_k$ and $z_{k+3}$ are equidistant to the origin is respectively 1, $2/\sqrt{3}$ and $\sqrt{1.5}$. 
The size of the ball should be seen as a balance between the angle of consecutive steps and the information their greater number provides. As $b$ increases, Theorem~\ref{th:polygon} shows that the minimal size of the ball we are guaranteed to reach also increases. We do not present calculations for higher bases here, as they provide relatively little information while requiring a case distinction of $2^{b-1}$ cases. 

\subsection{The polygonal structure}

Base $2$ presents a specific behavior as the regular polygon defined by 2 points, if one accepts it as such, is simply a line. For higher bases, we will show that the repeating pattern formed by $b$ consecutive steps forms a regular $b$-sided polygon, which we will abbreviate to $b$-gon. We begin by giving a characterization of all periodic starts for $b>2$. Our results will be shown with the hypothesis that the steps are associated with ``-'' signs. By central symmetry of the sequential construction, any valid $z_{-1}$ obtained for a periodic start with ``-'' will be associated to $-z_{-1}$, a valid periodic start for ``+''.

\begin{proof}[Proof of Theorem~\ref{th:polygonStruct}]
    Without loss of generality, suppose that the periodic sequence $z_n$ is associated with $-$ signs. Let $n\equiv j$ modulo $b$. Firstly, if $j=-1$, then $z_n=z_{-1}$ by definition of a periodic start and they are indeed on the circle of radius 0. We now suppose $j \neq -1$, and let $b_0$ be the closest multiple of $b$ smaller than $n$. Using the periodicity of the sequence, we have 
    \begin{align*}
        z_{n,x}&=z_{n-1,x}-\cos(2\pi \Vdc_b(n))\\
               &=z_{b_0-1,x}-\sum_{i=0}^{j} \cos(2\pi \Vdc_b(b_0+i))\\
               &= z_{-1,x}-\sum_{i=0}^{j} \cos(2\pi \Vdc_b(b_0)+2i\pi/b)\\
               &=z_{-1,x}-\frac{\sin(j\pi/b)}{\sin(\pi/b)}\cos(2\pi \Vdc_b(b_0)+j\pi/b)
    \end{align*}

Similarly, we can obtain for the $y$-coordinate
$$z_{n,y}=z_{-1,y}-\frac{\sin(j\pi/b)}{\sin(\pi/b)}\sin(2\pi \Vdc_b(b_0)+j\pi/b)$$.
This gives us 
\begin{align*}
    |z_n-z_{-1}|&=\sqrt{(z_{n,x}-z_{-1,x})^2+(z_{n,y}-z_{-1,y})^2}\\
    &=\frac{\sin(j\pi/b)}{\sin(\pi/b)}\sqrt{\cos(2\pi V_b(b_0)+j\pi/b)^2+\sin(2\pi V_b(b_0)+j\pi/b)^2}\\
    &=\frac{\sin(j\pi/b)}{\sin(\pi/b)}
\end{align*}
We obtain the first part of the desired result. Furthermore, for $k\equiv -1$ modulo $b$ and $0\leq j \leq b-1$, $\Vdc_b(k+j)=\Vdc_b(j)+\Vdc_b(k)$ and, for $j>0$, $\Vdc_b(k+j+1)-\Vdc_b(k+j)=1/b$. Since we are supposing that all signs are picked to be ``-'', the $b$ steps associated to $\Vdc_b(k),\ldots,\Vdc_b(k+b)$ are exactly the same as $\Vdc_b(0),\ldots,\Vdc_b(b)$ with an extra $\Vdc_b(k)$ rotation. This rotation is the same for all of the steps, therefore both sequences of steps describe the same polygonal shape. The second relation imposes that the angle between two consecutive steps is exactly $2\pi/b$ for each of the $b-1$ steps after the first one. We therefore have that $z_{k}=z_{k+b}$, that each side length is 1, and that each other angle of the $b$-sided polygon is $2\pi/b$. Each sequence of $b$ steps starting in $z_{k}$ for $k \equiv -1$ mod $b$ must be a $b$-gon. Finally, the angle between $(z_{-1},z_{0})$ and $(z_{k},z_{k+1})$ is $\Vdc_b(k)$, this concludes the proof.
\end{proof}

Each $b$-gon corresponding to $b$ consecutive steps will be a rotated copy of the points $(z_{-1},\ldots,z_{b-1})$, with a fixed point in $z_{-1}$. For each copy, the angle of that rotation is given by the first element of the van der Corput sequence in that block of $b$ elements: the $\Vdc_b(bj-1)$ for all $j$ determine the rotations. Since these values are all in $[0,1/b)$, the maximum rotation possible is of angle $2\pi/b$. Figure~\ref{fig:circles} shows these arcs and the circles they lie on. For a rotation of angle $\alpha$, we will write $P_{\alpha}$ for the associated polygon.
\begin{figure}[h!]
    \centering
    \includegraphics[width=0.48\linewidth]{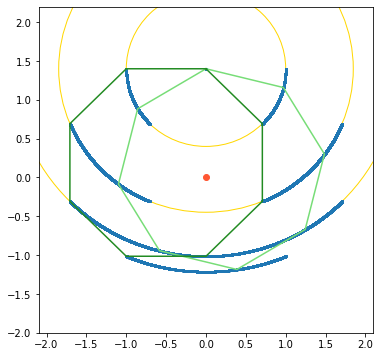}
    \includegraphics[width=0.48\linewidth]{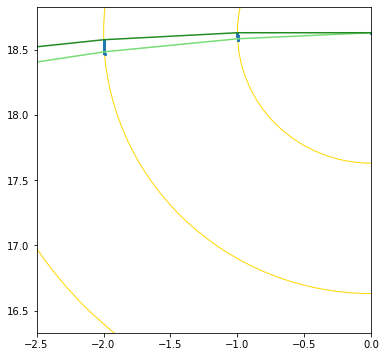}
    \caption{On the left, the first $200\,000$ points of the base $8$ Van der Corput sequence with periodic start, with supporting circles. The first and $102^{nd}$ polygons are drawn. The arcs correspond exactly to those of Figure~\ref{fig:examplesvdc}. On the right, the zoomed in points in first, second and third position of the 117-gon for the base 117 van der Corput sequence, again for $200\,000$ points.}
    \label{fig:circles}
\end{figure}
For such an infinite sequence to be valid with our construction methods, it requires that the norm associated with $z_n-e^{2\pi i V_b(n+1)}$ is smaller than that of $z_n+e^{2\pi i V_b(n+1)}$ for all $n$ (the role of the signs could be reversed, we study this case without loss of generality). This means that the scalar product of $(z_n,(0,0))$ and $(z_n,z_{n+1})$ is strictly positive.
At each point in the sequence, $(0,0)$ must be in the open halfplane containing all the $x$ such that the scalar product of $(z_n,x)$ and $(z_{n},z_{n+1})$ is strictly positive to keep the polygonal structure. In other words, we're considering the halfplane defined by the orthogonal line to the next side of the $b$-gon that passes through $z_n$. For each $b$-gon, we are interested in the intersection of these halfplanes, that forms another $b$-gon for any regular $b$-gon (smaller for $b\geq 5$, equal for $b=4$ and greater for $b=3)$. The origin must be inside this smaller $b$-gon, for all possible rotations. For a polygon $P$, we will denote $P_{i}$ its inner polygon obtained this way.

\begin{lemma}\label{lemma:triangle}
    The intersection of $P_{0,i}$ and $P_{2\pi/b,i}$ is a triangle.
\end{lemma}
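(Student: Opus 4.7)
The plan is to set up coordinates so that both inner polygons are described explicitly, identify the single ``poking'' vertex of $P_{2\pi/b,i}$ that sits inside $P_{0,i}$, and let it cut out the asserted triangle. I will assume throughout that $b$ is odd; for even $b$ the analogous computation yields a degenerate intersection (a segment), consistent with part (1) of Theorem~\ref{th:polygon}.

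Place $z_{-1}$ at the origin with $(z_{-1},z_0)$ along the positive $x$-axis. Using the description of $P_0$ from the proof of Theorem~\ref{th:polygonStruct} (a regular $b$-gon of side $1$, hence circumradius $R=1/(2\sin(\pi/b))$ and centroid $C_0=(1/2,h)$ with $h=\cot(\pi/b)/2$), a short computation shows that every perpendicular through a vertex of $P_0$ to the next side of $P_0$ lies at distance $R\sin(\pi/b)=1/2$ from $C_0$. Hence $P_{0,i}$ is the regular $b$-gon with center $C_0$ and apothem $1/2$; one of its sides is a segment of the $y$-axis with midpoint $(0,h)$, length $\tan(\pi/b)$, and outward normal $(-1,0)$. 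Since $2\pi/b$ is a symmetry of any regular $b$-gon about its own center, $P_{2\pi/b,i}$ is simply $P_{0,i}$ translated to the new center $C_{2\pi/b}=(-1/2,h)$, still of apothem $1/2$ and with the same set of side-outward-normal directions.

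The critical consequence of $b$ odd is that this shared set of side-normals contains $(-1,0)$ but not $(1,0)$: so $P_{2\pi/b,i}$ has a vertex (not a side) in the direction $(1,0)$ from $C_{2\pi/b}$, namely
$$V=\left(\tfrac{\sec(\pi/b)-1}{2},\,h\right).$$
Because $|V-C_0|=|1-\sec(\pi/b)/2|<1/2$ for $b\geq 3$, the point $V$ lies strictly inside the inscribed disk of $P_{0,i}$, hence in its interior. The two edges of $P_{2\pi/b,i}$ incident to $V$ leave $V$ in the symmetric directions $(-\sin(\pi/b),\pm\cos(\pi/b))$ and meet the $y$-axis at $V_{\pm}=(0,\,h\pm\delta)$ with
$$\delta=\frac{1-\cos(\pi/b)}{2\sin(\pi/b)}=\frac{1}{2}\tan\!\left(\tfrac{\pi}{2b}\right);$$
since $\tan(\pi/(2b))<\tan(\pi/b)$, the points $V_{\pm}$ land strictly inside the $y$-axis side of $P_{0,i}$.

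To finish, let $T$ be the closed triangle with vertices $V$, $V_+$, $V_-$. One containment is immediate: $T=\operatorname{conv}\{V,V_+,V_-\}\subseteq \overline{P_{0,i}}\cap \overline{P_{2\pi/b,i}}$ since all three vertices lie in this closed set. For the reverse, note $P_{0,i}\subset\{x\geq 0\}$, so it suffices to show $\overline{P_{2\pi/b,i}}\cap\{x\geq 0\}=T$. The remaining vertices of $P_{2\pi/b,i}$ have $x$-coordinate $-1/2+(\sec(\pi/b)/2)\cos(2\pi k/b)$ for $k\in\{1,\ldots,b-1\}$; the identity $\sec(\pi/b)\cos(\pi/b)=1$ combined with $\cos(2\pi k/b)\leq \cos(2\pi/b)<\cos(\pi/b)$ for those $k$ gives $x<0$ strictly. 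Convexity of $P_{2\pi/b,i}$ then forces its slice by $\{x\geq 0\}$ to be bounded only by the two edges at $V$ and by the $y$-axis, i.e.\ to equal $T$. The main obstacle is precisely this vertex count: for even $b$ there would be a whole opposite side rather than a lone vertex, and the slice would collapse to a segment.
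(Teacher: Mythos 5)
Your proposal is correct and follows essentially the same strategy as the paper: place $z_{-1}$ at the origin so the vertical side of $P_{0,i}$ lies on the $y$-axis, observe that $P_{2\pi/b,i}$ is a unit horizontal translate of $P_{0,i}$, and check that for odd $b$ exactly one vertex of the translate (the student's $V$, the paper's $p_{\text{left}}$) crosses the $y$-axis while its two incident edges re-cross inside the $y$-axis side of $P_{0,i}$. The only cosmetic differences are that you obtain the apothem $1/2$ directly from a dot-product identity (cleaner than the paper's $\sec$--$\tan$ manipulations) and you finish by slicing $\overline{P_{2\pi/b,i}}$ with the halfplane $\{x\geq 0\}$ rather than arguing the two edge intersections separately.
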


\begin{proof}
     We will show that the intersection of $P_{0,i}$ and $P_{2\pi/b,i}$ is exactly a triangle formed by the leftmost submit of $P_{2\pi/b,i}$ cutoff by a vertical edge. We will call $p_{left}$ this summit. Since this structure is invariant by translation, we will consider that $z_{-1,x}=0$. By construction, we know that the rightmost side of $P_{0,i}$ lies on $x=0$. We also have that $P_{2\pi/b,i}$ is a copy of $P_{0,i}$ translated by 1 to the right. Finally, we also know that the inner polygon's axis of symmetry is horizontal as the rightmost side is vertical. As such, it suffices to show that exactly one summit of $P_{2\pi/b,i}$ has its x-coordinate strictly negative and that its adjacent edges both intersect the vertical edge of $P_{0,i}$.
The center $c_i$ of $P_{0,i}$ is on the $y=0$ line by symmetry. We can compute its distance from the right vertical edge, it is given by
$$-c_{i,x}=\frac{\sec(\pi/2-2\pi/b)-\tan(\pi/2-2\pi/b)}{2\tan(\pi/b)}.$$
This is equal to 
$$-c_{i,x}=\frac{(1-\cos(2\pi/b))\cos(\pi/b)}{2\sin(\pi/b)\sin(2\pi/b)}.$$
Expressing all the terms as functions of $\sin(\pi/b)$ and $\cos(\pi/b)$ gives us $c_{i,x}=-0.5$. 
Translated by 1 to the right, the center of the circumscribed circle of $P_{2\pi/b,i}$ is in $0.5$, as such the $x$-coordinate of $p_{left}$ is 
$$0.5-\frac{\sec(\pi/2-2\pi/b)-\tan(\pi/2-2\pi/b)}{2\sin(\pi/b)}.$$
\begin{figure}[h!]

    \centering
    \begin{tikzpicture}
    \def\s{2}
    \pgfmathsetmacro{\R}{\s / (2 * sin(36))}

    \def\startAngle{54}

    \foreach \i in {0,...,4} {
        \pgfmathsetmacro{\Ai}{\startAngle + 72*\i}
        \coordinate (P\i) at (\Ai:\R cm);
    }

    \draw[red, thick] (P0) -- (P1) -- (P2) -- (P3) -- (P4) -- cycle;

    \foreach \i in {0,...,4} {
        \pgfmathsetmacro{\Ai}{\startAngle + 72*\i}
        \pgfmathsetmacro{\thetaPerp}{\Ai + 126 + 90}
        \draw[red, thick] (P\i) -- +(\thetaPerp:{2.1cm});
    }
    \filldraw[black] (P0) circle (2pt);
    \node[above right] at (P0) {$z_{-1}$};

    \begin{scope}[xshift=2cm]
        \foreach \i in {0,...,4} {
            \pgfmathsetmacro{\Ai}{\startAngle + 72*\i}
            \coordinate (Q\i) at (\Ai:\R cm);
        }

        \draw[blue, thick] (Q0) -- (Q1) -- (Q2) -- (Q3) -- (Q4) -- cycle;

        \foreach \i in {0,...,4} {
            \pgfmathsetmacro{\ipone}{int(mod(\i+1,5))}
            \path (Q\i); \pgfgetlastxy{\xA}{\yA}
            \path (Q\ipone); \pgfgetlastxy{\xB}{\yB}
            \pgfmathsetmacro{\thetaEdge}{atan2(\yB - \yA, \xB - \xA)}
            \pgfmathsetmacro{\thetaPerp}{\thetaEdge + 90}
            \draw[blue, thick] (Q\i) -- +(\thetaPerp:{2.1cm});
        }
    \end{scope}
\end{tikzpicture}
\label{fig:oddgons}
\caption{For $b=5$, $P_{0,i}$ in red and $P_{2\pi/b}$ in blue, with their associated inner pentagons. The leftmost vertex of the inner blue pentagon is $p_{left}$.}
\end{figure}
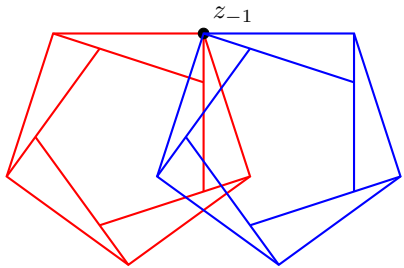

This can be reformulated to 

$$0.5-\frac{\sin(\pi/b)}{\sin(2\pi/b)}=0.5-\frac{1}{2\cos(\pi/b)}.$$
This is strictly negative for all integers $b\geq 5$.
The $x$-coordinate of the adjacent summits is given by $0.5-\cos(2\pi/b)/(2\cos(\pi)/b) $. This is equal to $0.5-\cos(\pi/b)/2+\sin^2(\pi/b)/(2\cos(\pi/b))$, which is positive for $b\geq 5$.

Since $P_{2\pi/b,i}$ is a convex $b$-gon, we have that $P_{2\pi/b,i}$ has a single summit with negative $x$-coordinate. Since this summit is aligned with the centers of the circumscribed circles of $P_{0,i}$ and $P_{2\pi/b,i}$ and is a translation of the leftmost summit of $P_{0,i}$, $p_{left}$ is in $P_{0,i}$.  It remains to show that its adjacent edges intersect the vertical edge of $P_{0,i}$.
Without loss of generality, we suppose that $z_{-1,y}$ is such that the axis of symmetry of $P_{0,i}$ is on $y=0$, i.e. $y=0$ bisects the vertical edge of $P_{0,i}$. The upper summit $u$ of this vertical edge has for $y$-coordinate $u_y=(\sec(\pi/2-2\pi/b)-\tan(\pi/2-2\pi/b))/2$, half the side length of $P_{0,i}$. Similarly, the lower vertex has for $y$-coordinate the opposite of that value. We know the angles of $P_{2\pi/b,i}$, as such the intersection of the top edge adjacent $p_{left}$ has $y$-coordinate equal to $-p_{left,x}\tan(\pi/2-\pi/b)$. This is equal to

$$\left(0.5-\frac{1}{2\cos(\pi/b)}\right)\frac{\cos(\pi/b)}{\sin(\pi/b)}.$$

We want this to be less than $u_y$. After reformulating, we want
$$\frac{\cos(\pi/b)-1}{2\sin(\pi/b)}+\frac{\cos(2\pi/b)-1}{\sin(2\pi/b)}<0.$$ 

This is clearly true for $b\geq 5$ since both cosines are strictly smaller than 1 and the sines are positive. The upper edge adjacent to $p_{left}$ therefore intersects the vertical edge of $P_{0,i}$. By symmetry, we obtain that the lower edge from $p_{left}$ also intersects this vertical edge. We therefore have that $P_{0,i}\cap P_{2\pi/b}$ is a triangular region with $p_{left}$ as one of the summits, and the two other summits are determined above.
\end{proof}

\begin{proposition}\label{prop:rotation}
    The intersection of all rotated inner $b$-gons $P_{\alpha,i}$ for $\alpha \in [0,2\pi/b)$ is exactly the intersection of $P_{0,i}$ and $P_{2\pi/b,i}$.
\end{proposition}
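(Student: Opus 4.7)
The inclusion $\bigcap_{\alpha \in [0, 2\pi/b)} P_{\alpha,i} \subseteq P_{0,i} \cap P_{2\pi/b,i}$ is immediate. The plan for the reverse inclusion is to invoke Lemma~\ref{lemma:triangle}, which identifies $T := P_{0,i} \cap P_{2\pi/b,i}$ as a triangle with three explicit vertices: the leftmost vertex $p_{left}$ of $P_{2\pi/b,i}$, and the two points $u, \ell$ where the edges of $P_{2\pi/b,i}$ adjacent to $p_{left}$ meet the vertical right edge of $P_{0,i}$. By convexity of each $P_{\alpha,i}$, it suffices to verify that each of these three vertices lies in $P_{\alpha,i}$ for every $\alpha \in (0, 2\pi/b)$.

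The key observation is that the inner-polygon construction is equivariant under rotation around $z_{-1}$: because $P_\alpha$ is by definition $P_0$ rotated by $\alpha$ around $z_{-1}$, one checks directly from the halfplane description of the inner polygon that $P_{\alpha,i} = R_\alpha(P_{0,i})$, where $R_\alpha$ denotes rotation by $\alpha$ about $z_{-1}$. So for any candidate vertex $v$ of $T$, the condition $v \in P_{\alpha,i}$ is equivalent to $R_{-\alpha}(v) \in P_{0,i}$, and as $\alpha$ ranges over $[0, 2\pi/b]$ the image $R_{-\alpha}(v)$ traces a circular arc of radius $|v - z_{-1}|$ around $z_{-1}$. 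The endpoints of this arc lie in $P_{0,i}$: at $\alpha = 0$ the point is $v \in T \subseteq P_{0,i}$, and at $\alpha = 2\pi/b$ the point is $R_{-2\pi/b}(v)$, which lies in $P_{0,i}$ because $v \in P_{2\pi/b,i} = R_{2\pi/b}(P_{0,i})$. It remains to show the arc also lies in $P_{0,i}$ at intermediate $\alpha$.

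To carry out this check I would work edge by edge. Writing $P_{0,i} = \bigcap_{k=0}^{b-1}\{p : p \cdot \hat{n}_k \leq s_k\}$ in coordinates placing $z_{-1}$ at the origin (legitimate since the construction is translation-invariant), the outward normals $\hat{n}_k$ sit at equally-spaced angles $2\pi k/b$, and the $k$-th edge condition along the arc becomes $v \cdot R_\alpha(\hat{n}_k) \leq s_k$ for $\alpha \in [0, 2\pi/b]$. Since $R_\alpha(\hat{n}_k)$ sweeps the short arc from $\hat{n}_k$ to $\hat{n}_{k+1} = R_{2\pi/b}(\hat{n}_k)$, it can be written uniquely as a non-negative combination $c_1(\alpha)\hat{n}_k + c_2(\alpha)\hat{n}_{k+1}$ with $c_1(\alpha) + c_2(\alpha) = \cos(\pi/b - \alpha)/\cos(\pi/b) \geq 1$. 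The hypothesis $v \in T$ provides the two endpoint bounds $v \cdot \hat{n}_k \leq s_k$ (from $v \in P_{0,i}$) and $v \cdot \hat{n}_{k+1} \leq s_k$ (from the $k$-th edge of $P_{2\pi/b,i}$, whose outward normal is $\hat{n}_{k+1}$ and whose constant is the same $s_k$).

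The main obstacle is that $c_1 + c_2 > 1$ strictly for $\alpha \in (0, 2\pi/b)$, so the naive combination only gives the weaker bound $(c_1 + c_2)s_k$, which can exceed $s_k$. The resolution exploits the \emph{vertex} status of $v$: at each vertex of $T$, strictly more than the minimum number of constraints defining $T$ are active, which pins down certain projections $v\cdot\hat{n}_j$ well below the generic threshold $s_j$. Concretely, at $p_{left}$ the two edges of $P_{2\pi/b,i}$ adjacent to $p_{left}$ are simultaneously tight, and at $u, \ell$ the vertical edge of $P_{0,i}$ (which passes through $z_{-1}$, hence contributes a constraint with right-hand side zero) is tight. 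Combined with the reflection symmetry of $P_{0,i}$ across its horizontal axis, this reduces the verification to two cases (say $p_{left}$ and $u$); each case then follows from a direct computation using the explicit regular-$b$-gon values of $s_k$ already catalogued in the proof of Lemma~\ref{lemma:triangle}.
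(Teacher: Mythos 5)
Your proposal shares the paper's core idea — rotation equivariance $P_{\alpha,i}=R_\alpha(P_{0,i})$ about $z_{-1}$, so that $v\in P_{\alpha,i}$ iff $R_{-\alpha}(v)\in P_{0,i}$, and the problem becomes showing a circular arc stays inside $P_{0,i}$. Your reduction to the three vertices of $T$ via convexity is a nice additional observation (the paper works with a generic point $c\in T$), and your identification of the obstruction — that the naive convex-combination bound gives $(c_1+c_2)s_k$ with $c_1+c_2=\cos(\pi/b-\alpha)/\cos(\pi/b)>1$ — is correct and shows the halfplane-by-halfplane route is genuinely delicate. However, the proof stops at the decisive step.

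The gap: the claim that ``vertex status pins down certain projections $v\cdot\hat n_j$ well below the generic threshold'' is asserted, not demonstrated, and it is not at all clear it resolves the obstruction. Note that $p_{\mathrm{left}}$ lies in the \emph{interior} of $P_{0,i}$, so no constraint of $P_{0,i}$ is tight there; having two constraints of $P_{2\pi/b,i}$ tight at $p_{\mathrm{left}}$ gives equalities $v\cdot\hat n_{j+1}=s_j$ for two indices $j$, but you need an inequality $c_1(v\cdot\hat n_k)+c_2(v\cdot\hat n_{k+1})\le s_k$ for \emph{every} $k$, and you offer no mechanism connecting the two tight constraints to the untight ones. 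Similarly, at $u$ the only explicitly stated tight $P_{0,i}$-constraint is the one with constant $0$, which controls a single projection. Without the actual computation, a reader cannot confirm the arcs stay inside $P_{0,i}$, and the paper's proof accomplishes exactly this via a different geometric device (trapping $A_{\max}$ between the chord $s_1$ and a parallel segment $s_2$ at distance equal to the inner-polygon side length). Two further issues: the even-$b$ case (where $P_{0,i}\cap P_{2\pi/b,i}=\emptyset$ and Lemma~\ref{lemma:triangle} does not apply) is not addressed; and the reflection-symmetry reduction from $\ell$ to $u$ is not as immediate as stated, since the reflection $\sigma$ across the horizontal axis fixes $P_{0,i}$ and $P_{2\pi/b,i}$ but does \emph{not} fix $z_{-1}$ (which sits above that axis), so $\sigma$ does not permute the family $\{P_{\alpha,i}\}_{\alpha}$ in an obvious way.
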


We note that while there is no polygon with exactly a rotation of angle $2\pi/b$, there is a sequence of polygons whose limit is this rotated polygon. Since we consider only the interior of each polygon as the acceptable region for the origin, this has no impact on our result.

\begin{proof}
    We consider $P_{0,i}$ and $P_{2\pi/b,i}$ the inner polygons of $P_0$ and $P_{2\pi/b}$. We first note that $P_{2\pi/b}$ is exactly a copy of $P_0$, translated to the right by 1. We separate in two cases depending on the parity of $b$. Firstly, we consider an even $b$. In this case, there are exactly two horizontal sides of $P_{0}$: $[z_{-1},z_0]$ and $[z_{b/2-1},z_{b/2}]$. We also have $z_{-1,x}=z_{b/2,x}$ and $z_{0,x}=z_{b/2-1,x}$. The normals associated with $z_{-1}$ and $z_{b/2-1}$ are vertical and the intersection of the desired halfplanes forms a vertical band $B_1=\{(x,y):z_{b/2-1,x}<x<z_{-1,x}\}$.  
    $P_{2\pi/b}$ is the symmetric of $P_{0}$ with respect to the axis $x=z_{-1,x}$. It also has two horizontal segments, $[z'_{0},z_{-1}]$ and $[z_{b/2},z'_{b/2-1}]$, where $z'_{j}$ is the symmetric of $z_j$ w.r.t the axis given above. For $P_{2\pi/b}$, its intersection polygon is contained in the band $B_2=\{(x,y):z_{-1,x}<x<z'_{0,x}\}$. We obtain $P_{0,i} \cap P_{2\pi/b,i}=\emptyset$. This implies that the intersection of all the rotated polygons is also empty.
    
We now turn to the odd $b$ case, where the intersection is not empty. We need to show that for any $\alpha \in [0,2\pi/b)$, $(P_{0,i} \cap P_{2\pi/b,i}) \subseteq P_{\alpha,i}$. For all of these polygons, the translation of the global polygon naturally leads to a translation of this center for $P_{2\pi/b}$. The length of each side of an inner polygon is given by $\sec(\frac{\pi}{2} - \frac{2 \pi}{b}) - \tan(\frac{\pi}{2} - \frac{2 \pi}{b})$. As illustrated in Figure~\ref{fig:intersec} and shown in Lemma~\ref{lemma:triangle}, the intersection between the two translated inner polygons $P_{0,i}$ and $P_{2\pi/b,i}$ is non-empty.

\begin{figure}[h!]
    \centering
    \includegraphics[width=0.5\linewidth]{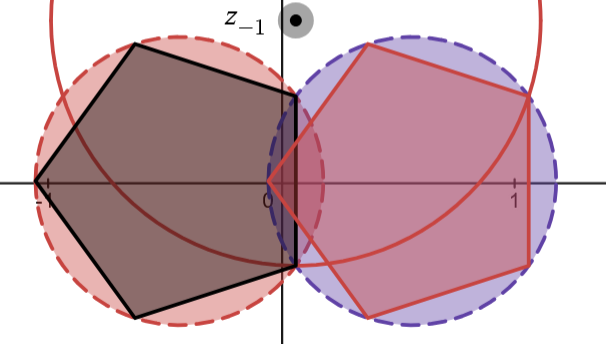}
    \caption{The intersection of the two inner polygons in the case of $b=5$.}
    \label{fig:intersec}
\end{figure}

Let $c$ be a point in $P_{0,i}\cap P_{2\pi/b,i}$. Let $A$ be the arc of angle $-2\pi/b$ centered in $z_{-1}$ and with rightmost extremity in $c$. Since $c \in P_{2\pi/b,i}$, the leftmost extremity of that arc, $d$, is also in $P_{0,i}$.
Observe that $c$ and $d$ can be written
$$c = \left( x_{-1} + r \cos(\theta + \frac{2 \pi}{b}), y_{-1} + r \sin(\theta + \frac{2 \pi}{b}) \right)$$
and
$$d = (x_{-1} + r \cos(\theta), y_{-1} + r \sin(\theta)).$$
Where $r:=|c-z_{-1}|$.
Let $0 < \beta \leq \frac{2 \pi}{b}$ be the angle of rotation of $P_{\beta}$ with respect to $z_{-1}$ (with the extremal case $\beta = \frac{2 \pi}{b}$ corresponding to $P_{\beta} = P_{2\pi/b}$).
Consider 
$$c_\beta = \left( x_{-1} + r \cos(\theta + \frac{2 \pi}{b} + \beta), y_{-1} + r \sin(\theta + \frac{2 \pi}{b} + \beta) \right)$$
and
$$d_\beta = (x_{-1} + r \cos(\theta + \beta), y_{-1} + r \sin(\theta + \beta)).$$
We have that $c$ lies on the arc between $d_\beta$ and $c_\beta$ about $z_{-1}$ 
(with $c = d_{\beta}$ for $\beta = \frac{2 \pi}{b}$).
If we suppose that the arc $A$ lies within $P_{0,i}$, then the $d_\beta$ to $c_\beta$ arc lies within $P_{\beta,i}$.
Since $c$ lies within the second arc, we have that $c \in P_{\beta,i}$.
It remains to show that the arc $A$ is in $P_{0,i}$. We will do this in two steps, first showing that the outermost arc does not intersect the outer edges apart from at its endpoints. 
This furthest arc $A_{\max}$ corresponds to the lowest intersection point between the circumscribed circle of $P_{0,i}$ and $P_{2\pi/b,i}$, and is at distance $K = \sec(\pi/2 - 2\pi/b)$ from $z_{-1}$. We now show that the intersection of $A_{\max}$ and $P_{0,i}$ forms a single arc.

\begin{figure}[h!]
    \centering
    \includegraphics[width=0.4\linewidth]{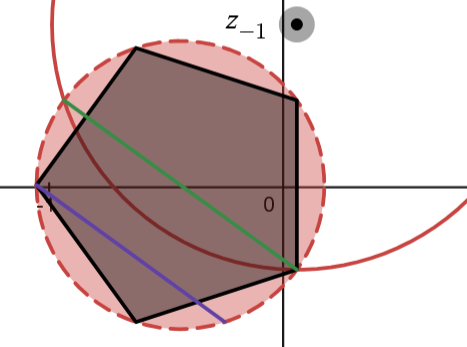}
    \caption{Base 5: in green $s_1$ and in blue $s_2$, the two segments that we use to bound the red arc in $P_{0,i}$.}
    \label{fig:arc}
\end{figure}

As illustrated in Figure~\ref{fig:arc}, let $s_1$ be the segment between the two intersection points of $A_{\max}$ and the circumscribed circle of $P_{0,i}$. Note that it is orthogonal to the left edge of the inner polygon it intersects. Let $s_2$ be the parallel segment passing through the lower summit of the side of $P_{0,i}$ that $A_{\max}$ intersects. The maximum distance between $s_1$ and $A_{\max}$ is reached in the middle of that segment, and is strictly smaller than 
$$\frac{\sec(\pi/2 - 2\pi/b)-\tan(\pi/2 - 2\pi/b)}{2}.$$ The distance between $s_1$ and $s_2$ is equal to $$\frac{\sec(\pi/2 - 2\pi/b)-\tan(\pi/2 - 2\pi/b)}{2}.$$ $A_{\max}$ is therefore always contained between $s_1$ and $s_2$. There are therefore only two sides of $P_{0,i}$ that can intersect $A_{\max}$, exactly those intersected by $s_1$ and $s_2$. We need to verify that each can be intersected only once by the arc. Both of these sides have one point inside the circle of radius $K$ centered in $z_{-1}$ and one outside. As such, there can be only one intersection point with each side. The intersection of $A_{\max}$ and $P_{0,i}$ therefore forms a single arc and since both extremal points of $A_{\max}$, $d$ and $c$, are in $P_{0,i}$, we conclude that $A_{\max} \subseteq P_{0,i}$. For any other arc $A_{\alpha}$, since $c_{\alpha}$ and $d_{\alpha}$ are in $P_{0,i}$ and $s_1$ is orthogonal to the left side of $P_{0,i}$, its orthogonal projection onto the line on which lies $s_1$ is in $s_1$. 
$A_{\alpha}$ is an arc between $d_{\alpha}$ and $c_{\alpha}$ for a circle centered in $z_{-1}$, as such it must lie on the same side of $(c_{\alpha},d_{\alpha})$ as $A_{\max}$.
This implies that $A_{\alpha}$ is in the convex region defined by the arc $A_{\max}$, $c_{\alpha}$, $d_{\alpha}$ and the endpoints of $s_1$. All of these elements are in $P_{0,i}$, which is itself a convex region. We therefore have that $A_{\alpha} \subset P_{0,i}$. This concludes the proof.
\end{proof}

\subsection{Proof of Theorem~\ref{th:polygon}}

\begin{proof}
    We first treat the much easier even $b$ case. As Proposition~\ref{prop:rotation} shows,  the intersection of the acceptable bands is empty, $B_1 \cap B_2 =~\emptyset$. However, we must have $(0,0)\in B_1$ and $(0,0)\in B_2$, we have a contradiction. It is therefore impossible for both these $b$-gons to appear. We do not have exactly $P_{2\pi/b}$ in our set of rotated polygons, but only a sequence of polygons converging to $P_{2\pi/b}$.

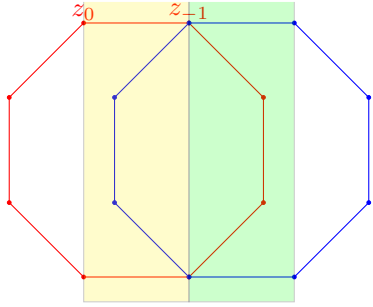
\begin{figure}[h!]
    \centering
    \begin{tikzpicture}[scale=0.7]

    \filldraw[red] (0,2.8) circle (1.pt);
    \filldraw[red] (-2.0, 2.8) circle (1pt);
    \filldraw[red] (-3.414, 1.385) circle (1pt);
    \filldraw[red] (-3.414, -0.614) circle (1pt);
    \filldraw[red] (-2.0, -2.028) circle (1pt);
    \filldraw[red] (0.0, -2.028) circle (1pt);
    \filldraw[red] (1.414, -0.614) circle (1pt);
    \filldraw[red] (1.414, 1.385) circle (1pt);
    \draw[red] (0,2.8) -- (-2.0, 2.8);
    \draw[red] (-2.0, 2.8) -- (-3.414, 1.385);
    \draw[red] (-3.414, 1.385) -- (-3.414, -0.614);
    \draw[red] (-3.414, -0.614) -- (-2.0, -2.028);
    \draw[red] (-2.0, -2.028) -- (0.0, -2.028);
    \draw[red] (0.0, -2.028) -- (1.414, -0.614);
    \draw[red] (1.414, -0.614) -- (1.414, 1.385);
    \draw[red] (1.414, 1.385) -- (0,2.8);
    \node[align=left][red] at (0,3) {$z_{-1}$};
    \node[align=left][red] at (-2.0,3) {$z_0$};
    \filldraw[blue] (0,2.8) circle (1.pt);
    \filldraw[blue] (2.0, 2.8) circle (1pt);
    \filldraw[blue] (3.414, 1.385) circle (1pt);
    \filldraw[blue] (3.414, -0.614) circle (1pt);
    \filldraw[blue] (2.0, -2.028) circle (1pt);
    \filldraw[blue] (0.0, -2.028) circle (1pt);
    \filldraw[blue] (-1.414, -0.614) circle (1pt);
    \filldraw[blue] (-1.414, 1.385) circle (1pt);
    \draw[blue] (0,2.8) -- (2.0, 2.8);
    \draw[blue] (2.0, 2.8) -- (3.414, 1.385);
    \draw[blue] (3.414, 1.385) -- (3.414, -0.614);
    \draw[blue] (3.414, -0.614) -- (2.0, -2.028);
    \draw[blue] (2.0, -2.028) -- (0.0, -2.028);
    \draw[blue] (0.0, -2.028) -- (-1.414, -0.614);
    \draw[blue] (-1.414, -0.614) -- (-1.414, 1.385);
    \draw[blue] (-1.414, 1.385) -- (0,2.8);
    \draw [draw=black, fill=yellow, opacity=0.2]
       (0,3.2) -- (0,-2.5) -- (-2.0,-2.5) -- (-2.0,3.2);
       \draw [draw=black, fill=green, opacity=0.2]
       (0,3.2) -- (0,-2.5) -- (2.0,-2.5) -- (2.0,3.2);
    \end{tikzpicture}
    \caption{In red $P_{a}$ and in blue $P_{b}$. In respectively yellow and green are $B_1$ and $B_2$, which do not intersect as they are open.}
    \label{fig:evenbgon}
\end{figure}
For any $p$ in $P_{0,i}$, there will exist $\delta>0$, such that the polygon $P_{2\pi/b-\delta}$ does not contain $p$ in its inner intersection polygon. As such there is no valid position for $(0,0)$ to obtain a periodic start.We now consider $b$ odd. Using Lemma~\ref{lemma:triangle}, we know that the intersection of $P_{0,i}$ and $P_{2\pi/b,i}$ is a triangle which we characterized in the proof of the Lemma. By Proposition~\ref{prop:rotation}, we know that the intersection of all the rotated polygons is exactly $P_{0,i} \cap P_{2\pi/b,i}$.
Given that the (open) inner polygons correspond to the acceptable regions for the origin, $z_{-1}$ must be such that the origin is contained in the interior of $P_{0,i}\cap P_{2\pi/b,i}$. This corresponds to a first triangular region $T_1$ with $z_{-1}>0$ coordinate-wise. Our construction corresponded to a first step to the left. In other words, all the sign choices during the construction of the sequence were ``-''. As mentioned previously, any sequence starting in $z_{-1}$ with only ``-'' choices is the symmetric with respect to the origin of a sequence starting in $-z_{-1}$ with only ``+'' signs. A second triangular region $T_2=\{(-x,-y):(x,y)\in T_1\}$ is exactly all the valid periodic starting positions for ``+'' signs.\end{proof}

An example of these regions for $b=5$ is provided in Figure~\ref{fig:perio}. The only base not covered by the Theorems shown so far is base 3. The big difference with other higher bases is that the intersection of the halfplanes forms a triangle which contains the initial triangle, rather than contained in. A similar approach could be taken, but Proposition~\ref{prop:rotation} is no longer verified. As such, the intersection of the larger valid triangles would need to be done over the rotation of all triangles. The edge of the valid region would then correspond to the union of three identical curves. One of these corresponds to the convex hull of the rotation of angle $2\pi/3$ around $z_{-1}$ of the line passing through $z_{0}$ such that the angle between $(z_{0},z_{-1})$ and this line (considered with positive slope) is $\pi/6$. A further case distinction would then need to be done depending on the sign of $z_{-1,x}$ and $z_{-1,y}$. We do not provide a formal proof of the region of periodic starts, only an illustration in Figure~\ref{fig:perio}.

    \begin{figure}[h!]
    \centering
    \includegraphics[width=0.45\linewidth]{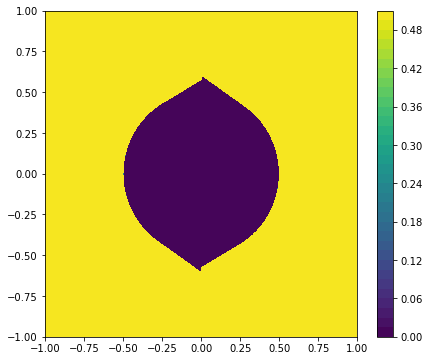}
    \includegraphics[width=0.45\linewidth]{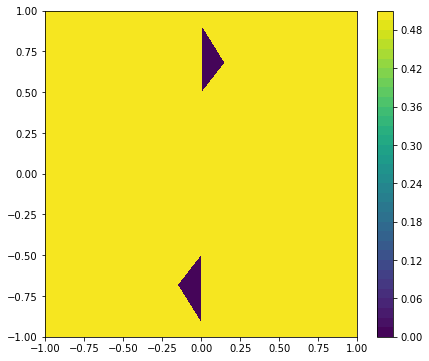}
    \caption{Left: base 3, right: base 5. In purple, regions of $[-1,1]^2$ where the starting position leads to a distance of less than $10^{-4}$ between $z_{-1}$ and $z_{149}$. Other values set to 0.5 for simplicity.}
    \label{fig:perio}
\end{figure}

\subsection{A limit of the characterization}

Finally, even though there are no infinite periodic sequences for even $b$, it is possible to find periodic sequences of length greater than $n$, for any $n \in \mathbb{N}$. Proposition~\ref{prop:endseq} characterizes the length of the sequence $(z_n)_{n \in \mathbb{N}}$ given a starting position close to the best possible choice. 

\begin{proposition}\label{prop:endseq}
Let $b$ be a positive even integer. For $\varepsilon > 0$, if $z_{-1} =  (\varepsilon, \frac{M}{2})$, where $M$ is the height of a regular $b$-gon, then the $b$-gon rotation stops at the $k$-th cycle, where $k$ is the first integer that satisfies $2\pi\Vdc_b(kb)=\arctan(M/(2\varepsilon))-\pi/2+2\pi/b$.
\end{proposition}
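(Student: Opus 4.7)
The plan is to translate the claim into a geometric question: when does an auxiliary point tracking the rotated origin first exit the inner polygon $P_{0,i}$ from the proof of Theorem~\ref{th:polygon}?

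By Theorem~\ref{th:polygonStruct}, as long as every prior cycle has closed up, the block of $b$ consecutive steps beginning at $z_{kb-1}=z_{-1}$ traces out the original $b$-gon rotated around $z_{-1}$ by $\alpha_{k} := 2\pi\Vdc_{b}(kb)$. This cycle is realised by the greedy rule iff the origin lies in the correspondingly rotated inner polygon $P_{\alpha_{k},i}$, which (since $P_{\alpha,i}$ is the image of $P_{0,i}$ under rotation by $\alpha$ around $z_{-1}$) is equivalent to $p(\alpha_{k}) \in P_{0,i}$, where in complex notation
$$p(\alpha) \;:=\; z_{-1}\bigl(1 - e^{-i\alpha}\bigr).$$
As $\alpha$ grows from $0$, $p(\alpha)$ traces the circle of radius $|z_{-1}|$ centred at $z_{-1}$ and passing through the origin. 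The problem reduces to identifying the smallest $\alpha^{\ast}$ with $p(\alpha^{\ast}) \in \partial P_{0,i}$.

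Write $z_{-1} = re^{i\phi}$ with $\phi = \arctan(M/(2\varepsilon))$. From the side-length computation in the proof of Proposition~\ref{prop:rotation}, $P_{0,i}$ is a regular $b$-gon of apothem $1/2$ centred at $c_{0} = (\varepsilon - 1/2,\,0)$, with outward edge normals $e^{2\pi ik/b}$, $k = 0,\ldots,b-1$. I claim $p$ first exits through the edge of outward normal $\mathbf{n} = e^{i(\pi - 2\pi/b)}$ (index $k = b/2-1$) at $\alpha^{\ast} = \phi - \pi/2 + 2\pi/b$: the displacement $p(\alpha^{\ast}) - z_{-1} = r\,e^{i(\phi + \pi - \alpha^{\ast})}$ points in direction $e^{i(3\pi/2 - 2\pi/b)}$, which is perpendicular to $\mathbf{n}$, so
$$(p(\alpha^{\ast}) - c_{0})\cdot\mathbf{n} \;=\; (z_{-1} - c_{0})\cdot\mathbf{n} \;=\; (1/2,\,M/2)\cdot\mathbf{n}.$$
Using the height formula $M = \cot(\pi/b)$ together with $1 + \cos(2\pi/b) = 2\cos^{2}(\pi/b)$ and $\sin(2\pi/b) = 2\sin(\pi/b)\cos(\pi/b)$, the right-hand side collapses to exactly $1/2$, placing $p(\alpha^{\ast})$ on the claimed edge. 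The proposition then follows: the polygon rotation first fails at the smallest $k$ for which $\alpha_{k} \geq \alpha^{\ast}$, i.e.\ the equation in the statement.

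The main obstacle is checking that $\alpha^{\ast}$ is indeed the \emph{first} exit of $p(\alpha)$ from $P_{0,i}$. Using the explicit form $p(\alpha) = 2r\sin(\alpha/2)\,e^{i(\phi + \pi/2 - \alpha/2)}$, each of the remaining constraints $(p(\alpha) - c_{0})\cdot e^{2\pi ik/b} < 1/2$ for $k \neq b/2-1$ becomes a sinusoidal inequality in $\alpha$ which must remain strict on $[0,\alpha^{\ast})$. Several are immediate from sign considerations -- the constraints from the lower-half edges, which $p$ moves away from as $\alpha$ grows from $0$ -- while the remaining few can be dispatched by exploiting the upper--lower reflection symmetry of $P_{0,i}$ about the horizontal axis through $c_{0}$, together with monotonicity of the relevant quantities on the short arc from the origin to $p(\alpha^{\ast})$.
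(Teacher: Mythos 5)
Your proof takes essentially the same geometric route as the paper's, in ``dual'' coordinates: where the paper rotates the polygon and tracks when a constraint boundary line through $z_{-1}$ sweeps past the origin, you fix $P_{0,i}$ and rotate the origin instead, i.e.\ track $p(\alpha)=z_{-1}(1-e^{-i\alpha})$ and ask when it first leaves $P_{0,i}$. These are identical conditions, and your trigonometric verification that $(p(\alpha^*)-c_0)\cdot\mathbf{n}=1/2$ with $\alpha^*=\arctan(M/(2\varepsilon))-\pi/2+2\pi/b$ is correct. In fact your identification of the critical edge is cleaner than the paper's: the outward normal $e^{i(\pi-2\pi/b)}$ belongs to the constraint at the vertex $z_{b/2-2}$, whose boundary line does pass through the rotation centre $z_{-1}$ (so it rotates in place, which is what makes the calculation closed-form); the paper's proof writes $z_{b/2-1}$, which appears to be an index slip since the constraint boundary there is vertical at $\theta=0$ and does not pass through $z_{-1}$.

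That said, you explicitly leave open the claim that $\alpha^*$ is the \emph{first} exit, i.e.\ that the arc $\{p(\alpha):\alpha\in[0,\alpha^*)\}$ lies in $P_{0,i}$ and that $p(\alpha)\notin P_{0,i}$ for $\alpha\in(\alpha^*,2\pi/b)$. This is a genuine gap in your write-up: the sketch of ``sign considerations plus upper--lower reflection symmetry'' is a plausible plan but not a proof, and the re-entry direction (ruling out that the arc exits and comes back for larger $\alpha$) is not addressed at all. To be fair, the paper's own proof is equally terse here --- it asserts monotonicity of a certain intercept and gestures at Theorem~\ref{th:polygon} --- so you have matched its level of rigor rather than exceeded it. A clean way to close the gap would be to note that exactly two of the $b$ constraint lines pass through $z_{-1}$ (at $z_{-1}$ itself and at $z_{b/2-2}$), show directly that the first is never violated for $\theta\in[0,2\pi/b)$ and the second is violated precisely for $\theta>\alpha^*$, and then argue that the short arc from $0$ to $p(\alpha^*)$, confined to the wedge at $z_{-1}$ between these two lines, cannot reach any of the remaining $b-2$ edges of $P_{0,i}$.
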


Before we prove this proposition, we first verify that this inequality indeed has a solution.
In our case, since $M$ and $\varepsilon$ are positive, we have $\arctan\frac{M}{2\varepsilon} \in (0, \frac{\pi}{2})$. This gives us
\begin{equation*}
    \arctan(M/(2\varepsilon))-\pi/2+2\pi/b< 2\pi/b.
\end{equation*}
Since $\{\Vdc_b(kb):k \in \mathbb{N}\}$ is uniformly distributed in $[0,1/b)$, there exists a $k$ that satisfies the inequality. Figure~\ref{fig:base8fail} provides an illustration of this behavior. Now let us prove the proposition.
\begin{proof}[Proof of Proposition~\ref{prop:endseq}]
    In an even $b$-gon, the halfplane orthogonal to $(z_i,z_{i+1})$ passing through $z_i$ also passes through $z_{i+b/2}$ for any $i$, where numbering is done modulo $b$. As such the halfplane associated with the $b/2+1$-th step passing through $z_{b/2-1}$ also passes through $z_{-1}$. When considering a maximal rotation $2\pi/b$, this halfplane is the one that will not contain $(0,0)$, as shown in the proof of Theorem~\ref{th:polygon}. We therefore need to find the smallest angle defined by $\Vdc_b(kb)$ for some $k \in \mathbb{N}$ such that the line $(z_{b/2-1},z_{-1})$ crosses the $y=0$ axis below 0. The intersection between this line and $y=0$ is a decreasing function for $\theta \in [0,2\pi/b)$.
     For the $k$-th polygon, the angle between $(z_{-1},z_{b/2-1})$ and the horizontal line in $z_{-1}$ is $\theta +\pi/2-2\pi/b$, where $\theta=2\pi\Vdc_b(kb)$.
     From this, $(z_{-1},z_{b/2-1})$ intersects $x=0$ in $(0,0)$ exactly when $\tan(\theta+ \pi/2-2\pi/b)=M/(2\varepsilon)$.
     This corresponds to $2\pi\Vdc_b(kb)=\arctan(M/(2\varepsilon))-\pi/2+2\pi/b$. For any angle greater than this one, the origin will be not be in the acceptable halfplane obtained in the $(b/2-1)$-th summit of the rotated polygon, and the periodicity will end.
\end{proof}
\vspace{-10pt}
\begin{figure}[h!]
    \centering
    \includegraphics[width=0.4\linewidth]{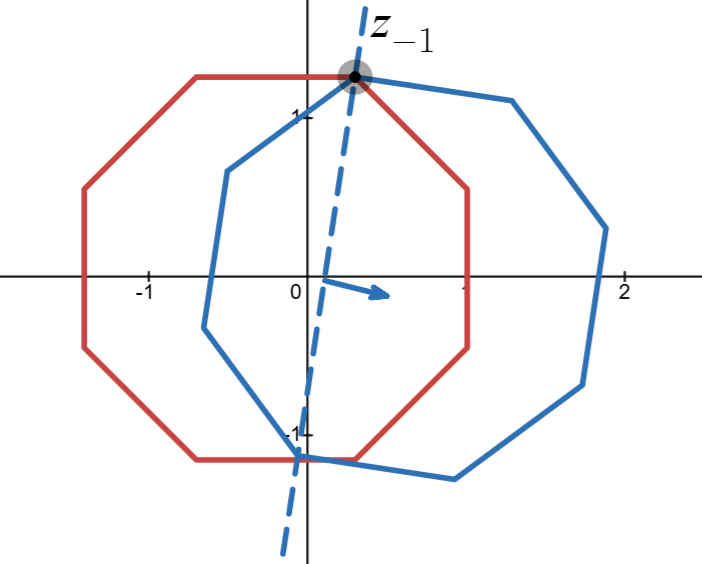}
        \vspace{-5pt}
    \caption{Example where the periodic rotations stop in base 8.}
    \label{fig:base8fail}
\end{figure}

Once again, by central symmetry, an identical result can be obtained for $z_{-1}=(-\varepsilon,-M/2)$.
There is an algorithmic way to find the smallest $kb$ such that the inequality above holds. If we write $kb$ in base $b$, then it is of the form $kb = d_{L} d_{L-1} \cdots d_{1} 0$, where the last digit is $0$. Then $V_{b} (kb)$ is equal to
\begin{equation*}
    V_{b} (kb) = \frac{0}{b} + \frac{d_{1}}{b^2} + \cdots + \frac{d_{L-1}}{b^L} + \frac{d_{L}}{b^{L+1}}
\end{equation*}
Finding the first $d_i<b-1$ then gives us the first polygon which will not be completed. There will be at most, in $b$-ary notation, $(d_i+1)d_{i-1}\ldots,d_10$ $z_{j}$ following the polygonal structure.

\vspace{-5pt}
\end{document}